\setlist[description]{leftmargin=\parindent,labelindent=\parindent}
\newtheorem{thm}{Theorem}
\newtheorem{prop}[thm]{Proposition}
\newtheorem{lem}[thm]{Lemma}
\newtheorem{cor}[thm]{Corollary}
\newtheorem{conj}[thm]{Conjecture}
\theoremstyle{definition}
\newtheorem{rem}[thm]{Remark}
\newcommand{\Z}{\mathbb{Z}}
\newcommand{\cc}{\mathbb{C}}
\newcommand{\qq}{\mathbb{Q}}
\newcommand{\M}{\mathcal{M}}
\newcommand{\p}{\mathbb{P}}
\newcommand{\pp}{\mathbb{P}}
\newcommand{\F}{\mathcal{F}}
\DeclareMathOperator{\quadr}{quad}
\renewcommand{\O}{\mathcal{O}}
\renewcommand{\tilde}{\widetilde}
\DeclareMathOperator{\ctp}{ctp}
\DeclareMathOperator{\GL}{GL}
\DeclareMathOperator{\SL}{SL}
\DeclareMathOperator{\Sym}{Sym}
\DeclareMathOperator{\PGL}{PGL}
\DeclareMathOperator{\PSL}{PSL}
\DeclareMathOperator{\BSL}{BSL}
\DeclareMathOperator{\BSO}{BSO}
\DeclareMathOperator{\BGL}{BGL}
\renewcommand{\gg}{\mathbb{G}}
\renewcommand{\bar}{\overline}
\newcommand{\Mb}{\overline{\M}}
\title{The Chow ring of the moduli space of degree $2$ quasi-polarized K3 surfaces}
\author{Samir Canning}
\address{Department of Mathematics, ETH Z\"urich}
\email {samir.canning@math.ethz.ch}
\author{Dragos Oprea}
\address{Department of Mathematics, University of California, San Diego}
\email {doprea@math.ucsd.edu}
\author{Rahul Pandharipande}
\address{Department of Mathematics, ETH Z\"urich}
\email {rahul@math.ethz.ch}
\date{June 2024}
\begin{document}

\baselineskip=16.5pt

\begin{abstract} We study the Chow ring with rational coefficients of the moduli space $\mathcal F_{2}$ of quasi-polarized $K3$ surfaces of degree $2$. We find generators, relations, and
calculate the
Chow Betti numbers.
The highest nonvanishing Chow group is 
$\mathsf A^{17}(\mathcal F_2)\cong {\mathbb{Q}}$. 
We prove
that the Chow ring consists of tautological classes and
is isomorphic to the even
cohomology. 
The Chow ring is not generated
by divisors and does not
satisfy duality with  respect to
the pairing into $\mathsf A^{17}(\mathcal F_2)$. 
The kernel of the pairing is
a 1-dimensional subspace of
$\mathsf{A}^{9}(\mathcal F_2)$ which we calculate explicitly.
In the appendix, we revisit Kirwan-Lee's calculation of the Poincar\'e polynomial of $\F_2$.

\end{abstract}
\maketitle
\setcounter{tocdepth}{1}
\tableofcontents

\section{Introduction} 

\subsection{Main results} Let $\mathcal F_{2\ell}$ denote the moduli space of quasi-polarized $K3$ surfaces of degree $2\ell$. The space $\mathcal F_{2\ell}$ is a nonsingular Deligne-Mumford stack of dimension $19$. We consider the Chow ring $\mathsf A^*(\mathcal F_{2\ell})$, which will be taken with $\mathbb Q$-coefficients throughout the paper. The Chow of ring admits a tautological subring $$\mathsf R^{*}(\mathcal F_{2\ell})\subset \mathsf A^{*}(\mathcal F_{2\ell})\, ,$$ which was defined in \cite {MOP} and will be reviewed in Section \ref{tring}.

We focus here on 
the moduli space $\mathcal F_{2}$ of $K3$ surfaces of degree $2\ell=2$. The generic
$K3$ surface 
$(S,L) \in \F_{2}$ 
is a double cover  $$\epsilon: S \rightarrow {\mathbb{P}^2}$$ branched
along a nonsingular sextic curve with quasi-polarization 
$L=\epsilon^*{\mathcal{O}}_ {\mathbb{P}^2}(1)$. The geometry of 
$\mathcal F_{2}$ is
therefore closely related to the classical geometry of sextic plane curves. 

\begin{thm}\label{main}
The following results hold for the Chow ring $\mathsf A^*(\mathcal F_2)$:
\begin{itemize}
\item [\textnormal{(i)}] The  Chow ring is tautological, $$\mathsf R^{*}(\mathcal F_2)=\mathsf A^{*}(\mathcal F_2)\, ,$$ and is generated by 4 elements of degrees $1, 1, 2, 3$ respectively. 
\item [\textnormal{(ii)}] $\mathsf R^{17}(\mathcal F_2)=\mathbb Q$ and $\mathsf R^{18}(\mathcal F_2)=\mathsf R^{19}(\mathcal F_2)=0$.  
\item [\textnormal{(iii)}] The dimensions are given by \begin{align*}\sum_{k=0}^{19} t^k \dim \mathsf R^k(\mathcal F_2)=1 + &2t + 3t^2 + 5t^3 + 6t^4 + 8t^5 + 10t^6 + 12t^7 + 13t^8 + \\&+ 14t^9 + 12t^{10} +10t^{11} + 8t^{12} + 6t^{13} + 5t^{14} + 3t^{15} + 2t^{16} + t^{17}.\end{align*}
\item[\textnormal{(iv)}] The cycle class map is 
an isomorphism onto the
even cohomology: 
$$\forall k\geq 0\,, \ \ \ \mathsf R^k(\mathcal F_2) 
\cong
H^{2k}(\mathcal F_2)\, .$$
\end{itemize} 
\end{thm}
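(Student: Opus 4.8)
The plan is to exploit the classical double-cover description: a generic $(S,L)\in\F_2$ is the double cover of $\pp^2$ branched along a sextic, so I would first set up a stratification of $\F_2$ adapted to this model. Let $V=\Sym^6(\cc^3)^\vee$, let $\pp(V)=\pp^{27}$ be the linear system of plane sextics, and let $\PGL_3$ act in the natural way. Writing $P\subset\pp^{27}$ for the locus of sextics with at worst simple (ADE) singularities, the minimal resolution of the associated double cover is a quasi-polarized $K3$ surface of degree $2$, and this identifies the non-unigonal open substack
\[
\mathcal U=[P/\PGL_3]\subset\F_2
\]
up to a $\zz/2$-gerbe coming from the covering involution, which is invisible with $\qq$-coefficients. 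The closed complement is the unigonal Heegner divisor $\H\subset\F_2$, whose points are surfaces for which $|L|$ maps to a cone rather than finitely onto $\pp^2$.

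The core of the computation is equivariant intersection theory on $\pp^{27}$. Since $\mathsf A^*(B\PGL_3)=\qq[c_2,c_3]$ with $\deg c_i=i$, the projective bundle formula gives a presentation $\mathsf A^*_{\PGL_3}(\pp^{27})=\qq[c_2,c_3,H]/(r)$, where $H$ is the equivariant hyperplane class and $r$ is a single relation of degree $28$. Removing the non-simple locus $\Sigma=\pp^{27}\setminus P$ via equivariant excision presents $\mathsf A^*(\mathcal U)$ as the quotient of this ring by the image of the Gysin pushforward from $\Sigma$; the required input is a stratification of $\Sigma$ into $\PGL_3$-orbits of sextics with prescribed singularities together with their equivariant fundamental classes. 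On $\mathcal U$ the Hodge class $\lambda$ is forced to be proportional to $H$ (indeed $\mathsf A^1(\mathcal U)$ is a quotient of $\qq H$), so the open part contributes the generators $H\sim\lambda$ in degree $1$ and $c_2,c_3$ in degrees $2,3$, all of which are tautological in the sense of \cite{MOP}.

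To pass from $\mathcal U$ to all of $\F_2$ I would identify the unigonal divisor $\H$ as a quotient stack of its own (double covers of a Hirzebruch cone, parametrized by forms modulo the automorphisms of the cone) and compute $\mathsf A^*(\H)$ by the same method, then glue through the right-exact sequence
\[
\mathsf A^{*-1}(\H)\xrightarrow{\ \iota_*\ }\mathsf A^*(\F_2)\to\mathsf A^*(\mathcal U)\to 0 .
\]
The class of the divisor itself supplies the second degree-one generator $[\H]$, completing the list of degrees $1,1,2,3$ in part (i). I expect this gluing to be the main obstacle: one must compute the Gysin map $\iota_*$ explicitly, determine both its image (the new classes) and its kernel (the new relations), and then reconstruct the full multiplicative structure rather than merely the additive one. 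It is also here that the failure of Poincar\'e duality is to be located --- after reducing to a presentation $\qq[\lambda,H,c_2,c_3]/I$, the socle $\mathsf R^{17}\cong\qq$ and the vanishing $\mathsf R^{18}=\mathsf R^{19}=0$ of part (ii), the Betti numbers of part (iii), and the explicit $1$-dimensional kernel of the pairing inside $\mathsf A^9$ all follow from a (long but mechanical) Gr\"obner-basis analysis of the ideal $I$ and of the pairing into $\mathsf A^{17}$.

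Finally, for the cohomological comparison (iv) I would use the even cohomology of $\F_2$ computed by Kirwan--Lee, revisited in the appendix, and check that its Betti numbers agree termwise with the dimensions found in part (iii). Because Kirwan--Lee obtain $H^*(\F_2)$ from the same GIT stratification by sextics, every even cohomology class is represented by an algebraic, indeed tautological, cycle, so the cycle class map $\mathsf R^k(\F_2)\to H^{2k}(\F_2)$ is surjective; the equality of dimensions then upgrades it to an isomorphism and shows that the even cohomology is entirely algebraic. The cleanest logical order is therefore to compute $\mathsf R^*(\F_2)$ in full, to compute $H^*(\F_2)$ independently, and to match the two, with the passage across the unigonal divisor being the step that requires the most geometric input.
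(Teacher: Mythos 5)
Your computation on the open part $\F_2\smallsetminus\mathsf{Ell}$ matches the paper's in spirit (equivariant Chow of $\pp^{27}$ modulo pushforwards from the bad loci, with $H\sim\lambda$, $c_2$, $c_3$ as generators), but the passage across the unigonal divisor --- which you yourself flag as the main obstacle --- is left without a mechanism, and this is a genuine gap. Excision only gives right-exactness, so to pin down the Betti numbers in (ii)--(iii) you must determine $\ker\iota_*$ and all relations involving $[\H]$, and there is no way to ``compute the Gysin map explicitly'' without knowing how $\H$ sits inside $\F_2$. The paper supplies exactly this via Shah's construction: $\F_2$ is an open substack of the $(2,3)$-weighted blowup $\tilde{Y}$ of $Y=X\smallsetminus(\mathsf{ML}\cup\mathsf{QP})$ along the triple-conic orbit $\mathsf{TC}$, with $\mathsf{Ell}$ the open part of the exceptional divisor and $t=-[\mathsf{Ell}]$; the Arena--Obinna structure theorem then yields the full multiplicative presentation $\mathsf{A}^*_G(\tilde{Y})\cong\mathsf{A}^*_G(Y)[t]/(tH,\,tc_3,\,Q(t))$ in one stroke. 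Moreover, the bad loci do not decouple from the divisor the way your plan assumes: since $\mathsf{TC}\subset\mathsf{CTP}$, one must remove the \emph{strict transform} $\tilde{\mathsf{CTP}}$, and the resulting relations acquire correction terms supported on the exceptional divisor (Remark \ref{r16}), computed by resolving the singular locus $\mathsf{CTP}$ by the projective bundle $\pp K^{\circ}$ and applying the weighted strict-transform formula (Theorem \ref{strict}) with weighted Segre classes and Shah's filtration identifying the weights $(2,3)$. In your set-up, where the non-ADE locus is removed entirely on $\mathcal U$ before gluing, these relations --- which restrict on $\mathsf{Ell}$ precisely to the two relations of \cite{CK} --- are lost, and left-exactness of the excision sequence (hence the exact dimensions) is only established a posteriori by rank comparison (Remark \ref{soclerem}). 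A smaller imprecision: the image of pushforward from the singular loci is not generated by equivariant fundamental classes of orbit strata (these loci are not finite unions of orbits); one needs pushforwards of the full Chow rings of resolutions, such as the principal-parts bundles $\pp K_{\quadr}$ and $\pp K_{\ctp}$, with surjectivity via Vistoli's lemma.

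Your argument for (iv) also has a gap: Kirwan--Lee's equivariant Morse-theoretic computation produces Betti numbers, not algebraicity of even cohomology classes. Indeed $\F_2$ has odd cohomology (degrees $27$, $31$, $33$, $35$), and the relative homology sequence for $(X_1^s/G,\F_2)$ feeds even compactly-supported classes of the removed locus into odd cohomology, so there is no formal reason the even cohomology is spanned by cycle classes, let alone tautological ones. The paper argues in the opposite direction: given the matching dimensions, it suffices to prove \emph{injectivity} of the cycle map, and this is where the real work lies --- one must pass to the lowest weight piece $W_{-2k}H_{2k}$ of Borel--Moore homology to make excision right-exact, prove the injectivity of $\mathsf{cl}$ on $Y\smallsetminus\mathsf{CTP}$ (Lemma \ref{lcycle}, via Totaro's weak/strong property and Mayer--Vietoris over the components of the bad locus), and then handle classes supported on $\mathsf{Ell}$ using the Gorenstein property of $\mathsf{A}^*(\mathsf{Ell})$, the surjectivity of $r^*$, and the isomorphism $r_*:\mathsf{A}^{16}(\mathsf{Ell})\to\mathsf{A}^{17}(\F_2)$. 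As written, your deduction of surjectivity would not go through.
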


Theorem \ref{main} is the first
complete Chow calculation for
the moduli spaces ${\mathcal F}_{2\ell}$ of
quasi-polarized $K3$ surfaces. There are several immediate connections and consequences:

\vspace{5pt}
\noindent $\bullet$ The generators (i) of $\mathsf R^{*}(\mathcal{F}_2)$
are {\em not} all divisor classes. Indeed, the Chow Betti numbers given in (iii) grow too quickly to be generated by the two divisor classes.

\vspace{5pt}
\noindent $\bullet$ In the proof of Theorem \ref{main}, we determine the structure of the Chow ring of $\mathcal F_2$: there are $14$ relations{\footnote{The
relations are recorded in Remark \ref{nonreducedremark} (3 relations), Remark \ref{quadremark} (3 relations), Remark \ref{twomore} (2 relations), Remark \ref{r15} and Remark \ref{r16} (together 6 relations). In these 14 relations, the $4$ generators are denoted by $H$ (degree 1), $\mathsf{Ell}$ (degree 1), $c_2$ (degree $2$), $c_3$ (degree 3).}} 
between the 4 generators (i). 
\vspace{5pt}

\noindent $\bullet$ The socle and
vanishing results of (ii)
were proposed earlier 
as analogues of Faber's conjectures \cite{Faber} for the tautological ring of the moduli space of curves $\mathcal M_g$: $$\mathsf R^{>g-2}(\mathcal M_g)=0\, , \quad \mathsf R^{g-2}(\mathcal M_g)=\mathbb Q\, .$$

\begin{conj}[Oprea--Pandharipande (2015)]\label{c1}
Let $\Gamma$ be the Picard lattice for a K3 surface such that $d= 20 - \text{\em rank}(\Gamma)>3$.
For the moduli space $\mathcal F_{\Gamma}$ of $\Gamma$-polarized $K3$ surfaces, we have
$$\mathsf R^{d-2}(\mathcal F_{\Gamma})\cong \mathbb Q\ \ \ \text{and} \ \ \
\mathsf R^{d-1}(\mathcal F_\Gamma)=\mathsf R^{d}(\mathcal F_\Gamma)=0\, .$$
\end{conj}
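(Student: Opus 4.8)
The plan is to treat $\mathcal{F}_\Gamma$ as an orthogonal modular variety and to extract the socle and the two vanishings from the interaction of the Hodge class, the Noether--Lefschetz divisors, and an induction on $\mathrm{rank}(\Gamma)$ along the Noether--Lefschetz stratification. Writing $\mathcal{F}_\Gamma$ as the quotient of a type IV period domain $\mathcal{D}_\Gamma$ of dimension $d = 20 - \mathrm{rank}(\Gamma)$ by an arithmetic group, the first step is to record, from the tautological formalism of Section~\ref{tring}, that $\mathsf{R}^*(\mathcal{F}_\Gamma)$ is generated by the Chern classes of the Hodge bundle together with the classes pushed forward from the universal $K3$ surface, and that in particular $\mathsf{R}^1(\mathcal{F}_\Gamma)$ is spanned by the Hodge class $\lambda$ and Noether--Lefschetz divisors. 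I would then aim to reduce all three assertions to statements about this finite generating set in the top two codimensions.

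For the vanishing $\mathsf{R}^{d-1}(\mathcal{F}_\Gamma) = \mathsf{R}^{d}(\mathcal{F}_\Gamma) = 0$, the model to imitate is Looijenga's proof that $\mathsf{R}^{>g-2}(\mathcal{M}_g) = 0$, where the vanishing is forced by a stratification in which the top tautological classes are supported on loci of too small dimension. The analogue here would be an excess- or affine-type stratification of $\mathcal{F}_\Gamma$ (or of the universal $K3$ surface over it) refining the Noether--Lefschetz filtration, so that every tautological class of codimension $> d-2$ is represented by a cycle supported on a stratum of dimension $< 2$ and therefore vanishes. A compactification $\overline{\mathcal{F}}_\Gamma$ on which $\lambda$ is the class of modular forms, and on which the top-codimension contributions are pushed to the boundary, should make this precise.

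For the socle $\mathsf{R}^{d-2}(\mathcal{F}_\Gamma) \cong \mathbb{Q}$ I would argue in two parts. Nonvanishing: exhibit a single codimension-$(d-2)$ class and detect it by restriction to a complete surface inside $\mathcal{F}_\Gamma$, where the degree should reduce to a known Hodge-theoretic number; the deepest Noether--Lefschetz strata, which are themselves $K3$ moduli spaces of dimension $2$, are the natural source of such test cycles. One-dimensionality: induct on $\mathrm{rank}(\Gamma)$ using that each Noether--Lefschetz divisor is essentially a space $\mathcal{F}_{\Gamma'}$ with $d' = d-1$. The key structural observation is that a socle class of $\mathcal{F}_\Gamma$ restricts on such a divisor to an element of $\mathsf{R}^{d-2}(\mathcal{F}_{\Gamma'}) = \mathsf{R}^{d'-1}(\mathcal{F}_{\Gamma'})$, which vanishes by the inductive vanishing statement; combined with the relations produced by the modularity of Noether--Lefschetz loci, this should collapse the codimension-$(d-2)$ piece to a single dimension.

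The main obstacle is that, unlike the case $d = 19$ settled in Theorem~\ref{main}, no explicit geometric model --- no analogue of the sextic double cover --- is available for general $\Gamma$, so every relation must be produced uniformly from automorphic rather than GIT input. I expect the crux to be the higher, non-divisorial tautological generators, such as the analogues of the classes $c_2$ and $c_3$ appearing for $\mathcal{F}_2$: one must show that in the top two codimensions these contribute nothing beyond the single socle class, and this seems to demand a genuinely new vanishing valid for all lattices with $d>3$, rather than a dimension count. Securing base cases for the induction, given that Theorem~\ref{main} only covers one rank-one lattice, is a further difficulty.
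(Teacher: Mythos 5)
The first problem is one of scope: the statement you set out to prove is Conjecture~\ref{c1}, and the paper does \emph{not} prove it --- it is stated as open. The paper's contribution is the single case of rank $1$ and $d=19$, namely $\mathcal F_2$, and that case is established not by any uniform automorphic argument but by a completely explicit model: Shah's description of $\mathcal F_2$ inside a $(2,3)$-weighted blowup of the space of plane sextics, excision relations from removing the loci $\mathsf{ML}$, $\mathsf{QP}$, $\widetilde{\mathsf{CTP}}$, $\widetilde{\mathsf{DC}}$ (computed via the Arena--Obinna weighted blowup formulas and Macaulay2), together with $\lambda^{17}\neq 0$ from \cite{kvg} for the socle. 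What you have written is a research program rather than a proof, and its two load-bearing steps are exactly the parts that remain open.

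Concretely: first, your vanishing step posits an ``excess- or affine-type stratification'' of $\mathcal F_\Gamma$ forcing every class of codimension $>d-2$ to be supported in dimension $<2$. No such stratification is known for orthogonal modular varieties; Looijenga's argument for $\mathcal M_g$ exploits special affine geometry with no analogue here, and the vanishing part of the conjecture is known only in cohomology \cite{P}, by Hodge-theoretic methods that do not transfer to Chow groups. Second, your induction for the socle runs in the wrong direction: you restrict a socle class to a Noether--Lefschetz divisor $\mathcal F_{\Gamma'}$ and invoke $\mathsf R^{d'-1}(\mathcal F_{\Gamma'})=0$, but the vanishing of a restriction says nothing about the class upstairs, since $r^*$ has a large kernel. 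The operation that actually links the socles is the pushforward: with $d'=d-1$, the socle degree $d'-2$ maps under $\iota_*$ to degree $d-2$, and this is precisely how the paper controls the socle of $\mathcal F_2$ (Remark~\ref{soclerem}: $r_*:\mathsf A^{16}(\mathsf{Ell})\xrightarrow{\sim}\mathsf A^{17}(\mathcal F_2)$, proved by comparing explicit Chow Betti counts in the excision sequence, not by formal induction). Since by \cite{PY} the tautological ring is generated by pushforwards from Noether--Lefschetz loci, controlling $\iota_*$ rather than $\iota^*$ is the crux, and the modularity of Noether--Lefschetz loci yields relations among divisor classes only, not in codimension $d-2$. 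Finally, your induction has no base: the only settled cases are $\mathcal F_U$ with $d=18$ \cite{CK} and $\mathcal F_2$ with $d=19$ (Theorem~\ref{main}), whereas a downward induction on $d$ would need anchors near $d=4$, none of which exist. Your closing paragraph concedes these difficulties; the honest conclusion is that no proof is given here, and none exists in the paper beyond the $\mathcal F_2$ case.
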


\noindent In cohomology with ${\mathbb{Q}}$-coefficients, the vanishing part
of Conjecture \ref{c1} is 
established in \cite{P}. For the hyperbolic lattice $U$, the moduli space $\mathcal F_U$ corresponds to elliptic $K3$
surfaces with section. The socle and 
Chow vanishing  properties of Conjecture \ref{c1} for
${\mathcal{F}}_U$ are established in \cite{CK}.
The  moduli space $\mathcal{F}_2$
is the first rank 1 case
where Conjecture \ref{c1} is proven.

\vspace{5pt}
\noindent $\bullet$ The Chow Betti number calculation (iii) shows that the pairing into
$\mathsf R^{17}(\mathcal{F}_2)$ is not 
perfect because the middle dimensions are not equal:
$$\dim \mathsf R^8(\mathcal{F}_2) \neq \dim \mathsf R^9(\mathcal{F}_2)\, .$$
In fact, the kernel
of the pairing $$\mathsf{R}^8(\mathcal F_2)\times \mathsf{R}^9(\mathcal F_2)\to \mathsf R^{17}(\mathcal{F}_2)$$ has rank $1$.   A formula for the class
in $\mathsf{R}^9(\mathcal F_2)$
which spans the
Gorenstein kernel is given in Remark \ref{hc2}. Finding a geometric
interpretation of the kernel
class is a very interesting open direction.
For $k$ not equal to 8 or 9, the pairing 
$$\mathsf{R}^k(\mathcal F_2)\times \mathsf{R}^{17-k}(\mathcal F_2)\to \mathsf R^{17}(\mathcal{F}_2)$$ is perfect. 
The results about the pairing are proven using  our explicit presentation for $\mathsf{R}^*(\F_2)$.

\vspace{5pt}
\noindent $\bullet$ A construction of the moduli space $\mathcal F_2$ as an open subset of 
a weighted blowup of the moduli space of sextic curves in $\mathbb P^2$ is given in \cite {S}, see \cite {La, Lo} for a summary. A partial desingularization of the full GIT compactification of the space of sextics is obtained in \cite {KL1} and requires three further blowups.  
Using all four blowups, the cohomology of $\mathcal F_2$ was studied in \cite {KL1, KL2}. A main result of \cite {KL2} is the calculation of the Poincar\'e polynomial\footnote{The value of the Poincar\'e polynomial given in \cite {KL2} is 
\begin{multline*} 1 + 2q^2 + 3q^4 + 5q^6 + 6q^8 + 8q^{10} + 10q^{12} + 12q^{14} + 13q^{16} + 14q^{18} + 12q^{20} \\+ 10q^{22}  + 8q^{24} + 6q^{26} + q^{27} + 5q^{28} + 3q^{30} + q^{31} + 2q^{32} + q^{33} + 3q^{35}\, ,\end{multline*} which differs from the statement above by $q^{33}+q^{34}$. We will explain the necessary correction in Appendix \ref{appendix}.}
\begin{multline*}\sum_{k=0}^{38} q^k \dim H^k(\mathcal F_2)=1 + 2q^2 + 3q^4 + 5q^6 + 6q^8 + 8q^{10} + 10q^{12} + 12q^{14} + 13q^{16} + 14q^{18} + 12q^{20}  \\ + 10q^{22}+ 8q^{24} + 6q^{26} + q^{27} + 5q^{28} + 3q^{30} + q^{31} + 2q^{32} + 2q^{33} + q^{34} + 3q^{35} .\end{multline*} While there are odd cohomology classes, the dimensions of the even cohomology of $\mathcal F_{2}$
match the Chow Betti numbers (iii)
as required by (iv).

\vspace{5pt}
\noindent $\bullet$ By a result of \cite{Ber}, the even cohomology $H^{2k}(\mathcal{F}_{2\ell})$ for $k\leq 4$ is tautological for all $\ell\geq 1$. Isomorphism (iv) is
a much stronger property which holds for the moduli space $\mathcal{F}_2$.

\subsection{Plan of the paper}
Definitions and basic results
related to tautological classes on moduli spaces
of $K3$ surfaces are reviewed in Section \ref{tring}.
Our approach to the geometry of $\mathcal{F}_2$
relies upon Shah's blowup construction \cite{S}
which is discussed in Section \ref{Shah}. Part (i) of Theorem
\ref{main} is proven in Section \ref{ade6}.

Shah describes $\F_2$ as an open subset of a weighted blowup of
the space of sextic plane curves.
The heart of our Chow calculation for $\mathcal{F}_2$
is presented in Sections \ref{ML}--\ref{relctp}, where
relations obtained from the removal of various loci are determined. 
Parts (ii) and (iii) of Theorem \ref{main} are proven
in Section \ref{relctp}. 

Part (iv) of Theorem \ref{main}, the isomorphism of
the cycle class map onto the even cohomology, is proven in Section \ref{cymap}.

The Kirwan-Lee calculation of the Poincar\'e polynomial of $\mathcal{F}_2$ 
is discussed carefully in Appendix \ref{appendix}. In particular, we explain  how to correct the calculations in \cite {KL1, KL2}. 

\subsection{Future directions}
While complete Chow calculations for the moduli spaces
$\mathcal{F}_{2\ell}$ will likely become
intractable for large $\ell$, the study of 
$\mathcal{F}_4$ should be possible as there
is a parallel (though more complicated) construction
starting from the moduli of quartic surfaces,
see \cite{LOG}.

Another direction of study is to find structure in
the tautological ring $\mathsf{R}^*(\mathcal{F}_{2\ell})$
beyond Conjecture~\ref{c1}. The parallel direction
in the study of the moduli space of curves has led
to the surprising discovery of uniform sets of
tautological relations, see \cite{utah} for a survey.
Whether any analogues of the Faber-Zagier and Pixton relations
hold for the moduli of $K3$ surfaces is an interesting question.

Finding algebraic cycle classes
on $\mathcal{F}_{2\ell}$ which are
non-tautological in cohomology is another open direction. 
Since such classes for the moduli of curves
and abelian varieties can be constructed using the geometry of Hurwitz
covers of higher genus curves \cite{COP2,GP,Z}, a simple idea for
$K3$ surfaces is the following.
Let 
$\mathcal{B}_g\subset {\mathcal{F}}_{2\ell}$
be the closure of the locus 
of $K3$ surfaces 
for which there exists a nonsingular linear section (of genus $\ell+1$) admitting a degree 2 map to a genus $g\geq 1$ curve.
A reasonable expectation is that the image in cohomology of 
the algebraic cycle class 
$$[\mathcal{B}_g]\in \mathsf{A}^*(\mathcal{F}_{2\ell})$$
is non-tautological for appropriate choices of 
$g$ and $\ell$. 

\subsection*{Acknowledgments}
We thank Dan Abramovich, Veronica Arena, Adrian Clingher, Carel Faber, Frances Kirwan, Bochao Kong, Hannah Larson, Zhiyuan Li, Alina Marian, Davesh Maulik, Stephen Obinna, Dan Petersen, and Burt Totaro for many related conversations
over the years. We are very grateful to the referee for several comments and corrections.

An earlier attempt to 
calculate the Chow ring of
${\mathcal{F}}_2$ with
Qizheng Yin and Fei Si did not
succeed (due to the geometric and
computational complexity of the
approach). A
different path 
is taken
here, leading to several simplifications. We thank Qizheng Yin and Fei Si
for the previous collaboration.

S.C. was supported by a Hermann-Weyl-Instructorship from the Forschungsinstitut f\"ur Mathematik at ETH Z\"urich.
D.O. was supported by  NSF-DMS 1802228.
R.P. was supported by
SNF-200020-182181, SNF-200020-219369,
ERC-2017-AdG-786580-MACI, and SwissMAP. 

This project has
received funding from the European Research Council (ERC) under the Euro-
pean Union Horizon 2020 research and innovation program (grant agreement No. 786580).

\section{Tautological classes} \label{tring} We review here the definition of  the tautological rings
of the moduli spaces $\mathcal F_{2\ell}$. 
Consider the universal $K3$ surface and quasi-polarization $$\pi:\mathcal S\to \mathcal F_{2\ell}\ ,\ \ \  \mathcal L\to \mathcal S\, .$$ 
The most basic tautological classes are:

\vspace{5pt}
\noindent $\bullet$ {\it Hodge classes.} The dual Hodge bundle is defined as the pushforward $$\mathbb E^{\vee}={\mathbf R}^2 \pi_{*} {\mathcal O}_{\mathcal S}\to \mathcal F_{2\ell}\, .$$ Let $\lambda \in \mathsf A^1 ({\mathcal F}_{2\ell})$ denote the first Chern class 
$$\lambda = c_1(\mathbb E)=- c_1 \left ({\mathbf R}^2 \pi_{*} {\mathcal O}_{\mathcal S} \right )\, .$$ The ring $\Lambda^{*}(\F_{2\ell})\subset \mathsf A^{*} ({\mathcal F}_{2\ell})$ generated by powers of $\lambda$ was studied in \cite{kvg}. It is shown there that $$\Lambda^{*}(\mathcal F_{2\ell})=\mathbb Q[\lambda]/(\lambda^{18})\, .$$  
The Chern classes of the tangent bundle of $\mathcal F_{2\ell}$ belong to the ring $\Lambda^{*}.$ In fact, by \cite{kvg} (see the paragraph following Proposition 3.2) we have $$\mathrm{ch}(T\mathcal F_{2\ell})=-1+21 e^{-\lambda}-e^{-2\lambda}\, .$$

\vspace{5pt}
\noindent $\bullet$ {\it Noether-Lefschetz classes.} 
Let $\Gamma$ be an even lattice of signature $(1, r-1)$ for an integer $r\leq 19$. 
Consider the moduli space $\mathcal F_{\Gamma}$ parametrizing $\Gamma$-polarized $K3$ surfaces: $$\iota:\Gamma\to \text{Pic}(S)\, ,$$ with the image of $\iota$ containing a quasi-polarization of the surface $S$. Upon fixing a primitive $v\in \Gamma$ with $v^2=2\ell$ mapping to the quasipolarization, there is a forgetful map $$\mathcal F_{\Gamma}\to \mathcal F_{2\ell}\, ,$$ whose image determines a 
Noether-Lefschetz
cycle 
in $\mathcal F_{2\ell}$.  Define 
$$\mathsf {NL}^{*}(\mathcal F_{2\ell})\subset \mathsf A^{*}(\mathcal F_{2\ell})$$ to be the $\mathbb{Q}$-subalgebra  generated by all Noether-Lefschetz cycle
classes. A more extensive discussion of the Noether-Lefschetz cycles can be found for instance in \cite {MP}.

\vspace{5pt}
\noindent $\bullet$
{\it Kappa classes.} 
Let $T_{\pi}^{\text{rel}}$ be the
relative tangent bundle of the universal
surface $\pi:\mathcal S\to \mathcal F_{2\ell}$. The first
Chern class is related to the Hodge class,
$$c_1 \left (T^{\text{rel}}_{\pi}\right ) = - \pi^{*} \lambda\, .$$ Define 
$t = c_2 \left (T_{\pi}^{\text{rel}}\right )$. By \cite [Proposition 3.1]{kvg}, the pushforwards $\kappa_{0,n}=\pi_{*} (t^{n})$ are contained in $\Lambda^{*}$. 
More general kappa classes can be defined by including the class of the quasi-polarization
$c_1(\mathcal L)$ and considering the pushforwards 
$$\kappa_{m,n} = \pi_{*} \left (c_1(\mathcal L)^m\cdot t^n \right)\, .$$ These classes depend upon the normalization of $\mathcal L$ by line bundles pulled back from $\mathcal F_{2\ell}$. By defining canonical normalizations for {\em admissible} $\mathcal{L}$, the ambiguity can be removed, see \cite {PY}.

\vspace{5pt}  

More generally, we define enriched kappa classes over $\mathcal F_{\Gamma}$ and consider their pushforwards to $\mathcal F_{2\ell}$. After picking a basis $\mathsf{B}$ for $\Gamma$, we obtain line bundles $\mathcal L_1, \ldots, \mathcal L_r\to \mathcal F_{\Gamma}$, and we set
\begin{equation} \label{genkap}
\kappa^{\Gamma, \mathsf B}_{a_1, \ldots, a_r, b}=\pi_{*} \left(c_1(\mathcal L_1)^{a_1} \cdot \ldots \cdot c_1(\mathcal L_r)^{a_r} \cdot c_2(T^{\textrm{rel}}_{\pi})^{b}\right)\, .
\end{equation}
The tautological ring of the 
moduli space ${\mathcal F_{2\ell}}$, $$ \mathsf R^{*}(\mathcal F_{2\ell}) \subset \mathsf A^{*}(\mathcal F_{2\ell})\, ,$$ is defined in \cite{MOP}
to be the $\mathbb{Q}$-subalgebra  generated by the pushforwards of all enriched kappa classes \eqref{genkap}
 for all possible $\Gamma$. 
 
 By definition, $\mathsf R^{*}(\mathcal F_{2\ell}) \supset \mathsf {NL}^{*}(\mathcal F_{2\ell})$.
 The following
 isomorphism was proven in \cite{PY}.

\begin{thm}[Pandharipande--Yin (2020)] For all $\ell\geq 1$, we have
    $\mathsf R^{*}(\mathcal F_{2\ell})=\mathsf {NL}^{*}(\mathcal F_{2\ell})$.
\end{thm}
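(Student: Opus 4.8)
The plan is to establish the nontrivial inclusion $\mathsf R^{*}(\mathcal F_{2\ell}) \subseteq \mathsf{NL}^{*}(\mathcal F_{2\ell})$, the reverse containment being immediate as noted above: every Noether--Lefschetz cycle is the pushforward $(j_\Gamma)_*(1)$ of the unit under a forgetful map $j_\Gamma\colon \mathcal F_\Gamma \to \mathcal F_{2\ell}$, and the unit $1\in \mathsf A^0(\mathcal F_\Gamma)$ is itself the enriched kappa class $\langle v,v\rangle^{-1}\,\pi_*\!\left(c_1(\mathcal L)^2\right)$ attached to the quasi-polarization vector $v$ (with $\langle v,v\rangle=2\ell$), hence tautological. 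Since $\mathsf R^{*}$ is generated as a $\mathbb Q$-algebra by the classes $(j_\Gamma)_*\kappa^{\Gamma,\mathsf B}_{a_1,\dots,a_r,b}$, it suffices to prove that each such generator is Noether--Lefschetz.

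Two structural facts drive the argument. First, the Hodge class is Noether--Lefschetz, $\lambda \in \mathsf{NL}^1(\mathcal F_{2\ell})$, so that the full $\lambda$-ring $\Lambda^{*}(\mathcal F_{2\ell})$ lies in $\mathsf{NL}^{*}(\mathcal F_{2\ell})$; I would obtain this from the modularity of the Noether--Lefschetz divisor generating series (Borcherds, Kudla--Millson), which exhibits $\lambda$ as an explicit rational combination of Noether--Lefschetz divisors. Second, because the universal family over $\mathcal F_\Gamma$ is pulled back from $\mathcal F_{2\ell}$, base change gives $\lambda_\Gamma = j_\Gamma^*\lambda$, and therefore $\Lambda^{*}(\mathcal F_\Gamma)=j_\Gamma^*\Lambda^{*}(\mathcal F_{2\ell})$. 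Combining these with the projection formula, any class of the form $\lambda_\Gamma^{k}\cdot[\mathcal F_{\Gamma'}]$ on $\mathcal F_\Gamma$ pushes forward to $\lambda^{k}\cdot (j_\Gamma)_*[\mathcal F_{\Gamma'}]$, which is Noether--Lefschetz in $\mathcal F_{2\ell}$ since $(j_\Gamma)_*[\mathcal F_{\Gamma'}]$ is (a multiple of) the Noether--Lefschetz cycle of the enlarged lattice $\Gamma'\supseteq\Gamma$.

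The heart of the proof is then the assertion that every enriched kappa class $\kappa^{\Gamma,\mathsf B}_{a_1,\dots,a_r,b}\in \mathsf A^{*}(\mathcal F_\Gamma)$ can be written as a polynomial in $\lambda_\Gamma$ together with a $\mathbb Q$-combination of classes $\lambda_\Gamma^{k_i}\cdot[\mathcal F_{\Gamma_i'}]$ supported on proper Noether--Lefschetz sub-loci of $\mathcal F_\Gamma$. I would prove this by induction on $\rank\Gamma$, using Grothendieck--Riemann--Roch for $\pi$ together with $c_1(T^{\mathrm{rel}}_\pi)=-\pi^*\lambda_\Gamma$ and the known relation $\kappa_{0,n}=\pi_*(t^n)\in\Lambda^{*}$. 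Restricted to a fiber, the classes $c_1(\mathcal L_i)$ span the algebraic lattice $\Gamma_{\mathbb Q}$ and their mutual intersection numbers are the constant lattice pairings $\langle v_i,v_j\rangle$; the fiberwise-topological input to $\pi_*$ is thus constant, while the variational part is measured by the Hodge bundle (contributing $\Lambda^{*}$) and by the divisor classes $\pi_*(c_1(\mathcal L_i)c_1(\mathcal L_j)c_1(\mathcal L_k))$, which modulo $\Lambda^{*}$ are again combinations of Noether--Lefschetz divisors by modularity. Higher products of the $c_1(\mathcal L_i)$ concentrate on the loci where the fiberwise Picard rank jumps, i.e. Noether--Lefschetz sub-loci $\mathcal F_{\Gamma_i'}$, and these terms are Noether--Lefschetz by the induction hypothesis and the pushforward-stability of the previous paragraph. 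Feeding the resulting decomposition through $(j_\Gamma)_*$ and applying the projection formula completes the reverse inclusion.

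The main obstacle is precisely this last decomposition. The modular input for $\lambda$ can be cited, but isolating, inside $\pi_*\!\left(\prod_i c_1(\mathcal L_i)^{a_i}\cdot t^{b}\right)$, the genuinely variational contributions from the constant lattice data---and controlling the excess-intersection corrections that live on the deeper Noether--Lefschetz strata---requires careful bookkeeping. One must verify that the monodromy on the transcendental local system $\mathcal T\subset R^2\pi_*\mathbb Q$ is large enough (Zariski-dense in the relevant orthogonal group) that no invariant class beyond $\lambda_\Gamma$ is produced away from these strata, and one must check that the induction on $\rank\Gamma\le 19$ terminates. Assembling the three ingredients---the trivial containment, the pushforward-stability anchored by $\lambda\in\mathsf{NL}^1$ and $\lambda_\Gamma=j_\Gamma^*\lambda$, and the fiberwise decomposition---then yields $\mathsf R^{*}(\mathcal F_{2\ell})=\mathsf{NL}^{*}(\mathcal F_{2\ell})$.
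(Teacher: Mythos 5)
The theorem you were asked to prove is not actually proved in this paper: it is imported from Pandharipande--Yin \cite{PY}, so the relevant comparison is with the argument there. Your first two ingredients match the known inputs: the inclusion $\mathsf{NL}^{*}\subseteq \mathsf R^{*}$ is essentially definitional, and $\lambda\in\mathsf{NL}^1(\mathcal F_{2\ell})$ does follow from Borcherds--Kudla--Millson modularity of the Noether--Lefschetz divisor series (this is also how $\lambda$ is handled in \cite{MOP,PY}, and how the present paper writes $\lambda$ in terms of $[\mathsf{Ell}]$ and $[\mathsf{Sing}]$ for $\ell=1$). The genuine gap is in your third step, which you yourself flag as the heart: the claim that every enriched kappa class is a polynomial in $\lambda_\Gamma$ plus terms supported on proper Noether--Lefschetz loci, proved via Zariski density of the monodromy of the transcendental local system so that ``no invariant class beyond $\lambda_\Gamma$ is produced away from these strata.'' Monodromy invariance of a local system can only control \emph{cohomological} information (where the decomposition theorem and the Leray filtration are available); it gives no handle on Chow groups, which is where the theorem lives. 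There is no Chow-theoretic Leray filtration to quote: knowing that the fiberwise restriction of a class is the ``constant'' lattice data does not imply in $\mathsf A^{*}$ that the class is a $\lambda$-polynomial plus cycles supported on the Picard-rank-jumping loci. (Already for a single K3 surface, $\mathsf A^2(S)=\mathrm{CH}_0(S)$ is infinite-dimensional by Mumford, while the monodromy-invariant cohomology is tiny --- Chow simply is not computed by invariants.) So your central decomposition, which is essentially the full content of the theorem, is asserted rather than established, and the proposed mechanism fails at exactly the Chow-theoretic point; GRR plus induction on $\mathrm{rank}\,\Gamma$ cannot repair this, because GRR only produces relations among $\kappa$-classes and $\lambda$, never the needed support statement.

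What replaces this step in \cite{PY} is a genuinely geometric input with no monodromy content: a universal decomposition, in Chow, of the diagonal in the triple fiber product $\mathcal S\times_{\mathcal F_{2\ell}}\mathcal S\times_{\mathcal F_{2\ell}}\mathcal S$ of the universal K3 surface, with correction terms supported over Noether--Lefschetz loci --- a family version of the Beauville--Voisin decomposition, whose NL-supported terms are invisible fiberwise. It is obtained by exporting Getzler's genus~$1$ relation through the (reduced) virtual class of the moduli of stable maps to the universal surface, and the resulting Gromov--Witten terms are converted into Noether--Lefschetz classes using the GW/NL correspondence of Maulik--Pandharipande \cite{MP} together with Borcherds modularity. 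Multiplying this diagonal decomposition by monomials in $c_1(\mathcal L)$ and $t=c_2(T^{\mathrm{rel}}_{\pi})$ and pushing forward is what reduces the kappa classes (and, working over $\mathcal F_{\Gamma}$ and applying your projection-formula step, the enriched kappa classes) to $\mathsf{NL}^{*}(\mathcal F_{2\ell})$. In short: your scaffolding --- the trivial containment, $\lambda\in\mathsf{NL}^1$, and pushforward stability along $j_\Gamma$ --- is sound, but the core reduction requires a Chow-level diagonal decomposition in families, not a monodromy bound.
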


Stronger results hold for divisor classes.
In codimension $1$, the isomorphism $$\text{Pic} (\mathcal F_{2\ell})_{\mathbb Q}=\mathsf {NL}^1(\mathcal F_{2\ell})$$ was conjectured in \cite {MP} and proven in \cite {Ber}. Combined with \cite {B}, this isomorphism determines the Picard rank. The integral Picard group has recently been considered in \cite {LFV}. Furthermore, bases for the rational Picard group for small $\ell$ are given in \cite {OG} and \cite{GLT}. 

For ${\mathcal F_2}$, consider the divisor $\mathsf{Ell}$ of elliptic $K3$ surfaces with a section and the divisor $\mathsf {Sing}$ of $K3$ surfaces for which the quasi-polarization fails to be ample. These are Noether-Lefschetz divisors corresponding to the lattices $$\begin{bmatrix} -2 & 1 \\ 1 & 0\end{bmatrix} \ \ \ \text{and} \ \ \ \begin{bmatrix} 2 & 0 \\ 0 & -2\end{bmatrix} \, ,$$
respectively. 
By \cite{OG}, we have $$\text{Pic}(\mathcal F_2)_{\mathbb Q}=\langle \left[\mathsf{Ell}\right], \left[\mathsf {Sing}\right]\rangle$$ which is consistent with part (iii)
of Theorem \ref{main}. In fact, the arguments of \cite {OG} or alternatively \cite [Proposition 1]{MOP} imply that $\lambda$ is a combination of $\left[\mathsf{Ell}\right]$ and $\left[\mathsf{Sing}\right]$ with nonzero coefficients. Thus, we also have  \begin{equation}\label{ogrady}\text{Pic}(\mathcal F_2)_{\mathbb Q}=\langle \lambda, \left[\mathsf{Ell}\right]\rangle\, .\end{equation}

\section{Shah's construction}\label{Shah}

\subsection{Overview} We review here the construction of 
$\mathcal F_2$  described in \cite {S}. We start with the moduli space of sextics with suitably restricted singularities. 
The moduli space $\mathcal F_2$ is
then obtained as an open subset of a weighted blowup 
of the locus of triple conics.
Using the construction, we will prove part (i) of Theorem \ref{main}. 

\subsection {ADE sextics} \label{ade6}
By a result of \cite {Ma}, a quasi-polarized $K3$ surface $(S, L)$ of degree $2$ 
must take one of the following
two geometric forms:
\begin{itemize}
\item [(a)] $S$  is the resolution of a double cover $\epsilon:\widehat S\to \mathbb P^2$ branched along a sextic curve $C$ with
$L=\epsilon^*{\mathcal{O}}_ {\mathbb{P}^2}(1)$.
 For $S$ to be a nonsingular $K3$ surface, $C$ must have ADE singularities \cite {BHPV}. 
\item [(b)] $S$ is  an elliptic fibration $S\to \mathbb P^1$ with fiber class $f$, section $\sigma$, and $L=\sigma+2f$. 
\end{itemize}
Surfaces $(S,L)$ of form (b) constitute the  Noether-Lefschetz divisor $\mathsf{Ell}$ in $\mathcal F_2$.
The complement $\mathcal F_2\smallsetminus \mathsf{Ell}$ is a gerbe banded by a finite group over the moduli space of plane sextics with ADE singularities.

We will construct the moduli space of
plane sextics with ADE singularities 
as a quotient stack.
Let $V=\mathbb C^3$, and let $$X=\mathbb P(\Sym^6 V^*)=\mathbb P^{27}$$ 
be the space of all sextic curves in $\mathbb P(V)$. A sextic $C\subset \mathbb P^2$ has at worst ADE singularities if and only if the following three conditions are satisfied simultaneously 
\begin{itemize}
\item [(i)] $C$ is reduced,
\item [(ii)] $C$ does not contain a consecutive triple point,
\item [(iii)] $C$ does not contain a quadruple point.
\end{itemize}
By definition, a consecutive triple point $p\in C$ is a triple point such that the strict transform of $C$ in the blowup of $\mathbb P^2$ at $p$ continues to admit a triple point. In local coordinates $(x,y)$ around $p$, the equation for $C$ lies in the ideal $(x,y^2)^3$.
We write $\mathsf{NR}$, $\mathsf {CTP}$, $\mathsf {QP}$ for the three loci failing conditions (i)-(iii) respectively. 
We have a morphism 
\begin{equation}
\label{fff1} \F_2\smallsetminus \mathsf{Ell}\rightarrow \left[\left(X\smallsetminus (\mathsf{NR}\cup \mathsf {CTP}\cup\mathsf {QP})\right)/\PGL(V)\right]\, ,  
\end{equation}
of degree $1/2$ (because of the extra
$\Z_2$ automorphisms of the $K3$ surfaces). We also have a
degree $1/3$ morphism
\begin{equation} 
\label{fff2}
\left[\left(X\smallsetminus (\mathsf{NR}\cup \mathsf {CTP}\cup\mathsf {QP})\right)/\SL(V)\right] \rightarrow
\left[\left(X\smallsetminus (\mathsf{NR}\cup \mathsf {CTP}\cup\mathsf {QP})\right)/\PGL(V)\right]
\,. \end{equation}
induced by the map $\SL(V)\rightarrow \PGL(V)$.
Both \eqref{fff1} and \eqref{fff2}
yield isomorphisms on 
Chow
groups with $\mathbb{Q}$-coefficients via pullback. The group $G=\SL(V)$ will be
geometrically
simpler for us to work with
because $\BSL(V)$
carries a universal vector bundle.

The locus $\mathsf{NR}$ is reducible with components corresponding to the images of the maps
\[
m_r:\p (\Sym^{6-2r} V^*) \times \p (\Sym^r V^*)\rightarrow X
, \quad m_r(f,g)=fg^2\] for $r=1,2,3$. 
Let $\mathsf{ML}\subset \mathsf{NR}$ denote the irreducible component of $\mathsf{NR}$ corresponding to sextics containing a multiple line (the image of $m_1$). The locus $\mathsf{ML}$ has codimension $11$ in $X$. The other components of $\mathsf {NR}$ are the double conic locus (the image of $m_2$) and the double cubic locus (the image of $m_3$). These have codimensions $17$ and $18$ in $X$. We will consider these two components together, thus writing $\mathsf{DC}$ for their union. The quasi-projective locus of cubes of {\em nonsingular} conics will be denoted $\mathsf {TC}$. It is easy to see that the loci $$\mathsf {QP}, \, \mathsf {CTP}, \,\mathsf{TC}\subset X$$ have codimensions $8$, $9$ and $22$ respectively. 

\begin{proof}[Proof of Theorem \ref{main} (i).] Throughout the paper, we identify Chow groups of quotient stacks with the corresponding equivariant Chow groups \cite {EG, Kr}. 

We first consider the Chow classes in \[\mathsf A^*(\F_2\smallsetminus \mathsf{Ell})=\mathsf A^*_G(X\smallsetminus (\mathsf{NR}\cup \mathsf {CTP}\cup\mathsf {QP}))\, .
\] Note that $\mathsf A_G^*=\mathbb Z[c_2, c_3]$,
 where $c_2,c_3$ are the second and third Chern classes of the universal bundle over $\mathrm{B}G$.
The $\mathsf A_G^*$-algebra $\mathsf A_G^*(X)$ is generated by $H$, the equivariant hyperplane class. Since we have a surjection
$$\mathsf A_G^{*}(X)\to \mathsf A^*_G(X\smallsetminus (\mathsf{NR}\cup \mathsf {CTP}\cup\mathsf {QP}))\, ,$$ we see that $\mathsf A^*(\F_2\smallsetminus \mathsf{Ell})$ is generated by the images of $H,$ $c_2,$ $c_3$. By \eqref{ogrady}, $\mathsf {Pic}(\mathcal F_2)$ is generated by $\lambda$ and $\left[\mathsf {Ell}\right]$. The class $H$ is the restriction of a linear combination $a\lambda+b [\mathsf {Ell}]$ to $\mathcal F_2\smallsetminus \mathsf{Ell}$, and therefore
must be a multiple of $\lambda$. Thus, the ring $\mathsf A^{*}(\mathcal F_2\smallsetminus \mathsf {Ell})$ is generated by $\lambda,$ $c_2,$ $c_3.$

Over the locus $\mathcal F_2\smallsetminus \mathsf{Ell}$, the classes $c_2,$ $c_3$ are the Chern classes of a rank $3$ vector bundle $\mathcal V$ which can be explicitly described. Let $$\pi:(\mathcal S, \mathcal L)\to \mathcal F_2$$ be the universal surface and the universal polarization. 
For surfaces in the locus $\mathcal F_2\smallsetminus \mathsf{Ell}$, the quasi-polarization $L\to S$ is globally generated inducing a morphism $S\to \mathbb P(H^0(S, L))$. 
Then $c_2, c_3$ are the Chern classes of the bundle{\footnote{
The third root $\det (\pi_* (\mathcal L))^{-\frac{1}{3}}$ can either be viewed formally for the Chern class calculation or can be viewed as an actual line bundle on the
fiber product of \eqref{fff1} and \eqref{fff2}.}}
$$\mathcal V=\pi_{*}\left(\mathcal L\right) \otimes \det (\pi_{*} \left(\mathcal L)\right)^{-\frac{1}{3}}\, .$$ A Grothendieck-Riemann-Roch calculation explained in shows that $c_2, c_3$ are the restrictions of tautological classes on $\mathcal F_2\smallsetminus \mathsf{Ell}.$ The Grothendieck-Riemann-Roch calculation will be made more precise in Remark \ref{grr} below. 

As  shown in \cite [Theorem 4.1]{CK}, the Chow classes on $\mathsf{Ell}$ are polynomials in (the restrictions of) $\lambda$ and a certain codimension $2$ class. In fact, there is a quotient presentation $$\mathsf {Ell}=[Z/\PSL_2]\to \textrm{BPSL}_2$$ and the codimension $2$ class is obtained by pullback from the base. As we will see below, the stack $\mathcal F_2$ also admits a quotient presentation $$\mathcal F_2=[W/G]\to \textrm{B}G$$ hence the class $c_2$ also makes sense on $\mathcal F_2$ by pullback from the base. Furthermore, there is a natural morphism $$\PSL_2\to G\, , \ \  \ g\mapsto \Sym^2 g$$ compatible with the above maps. Geometrically, this corresponds to the fact that the linear series $|L|$ induces a map from $S$ to a nonsingular conic in $\mathbb P^2$ in the elliptic case. Consequently, the restriction of $c_2$ from $\mathcal F_2$ to $\mathsf{Ell}$ can be chosen to be the degree $2$ generator on $\mathsf {Ell}.$ Thus $\mathsf{A}^{*}(\mathsf {Ell})$ is generated by $\lambda$ and $c_2$. Using the push-pull formula, we conclude that the image of $\mathsf A^*(\mathsf {Ell})$ in $\mathsf{A}^{*}(\mathcal F_2)$ lies in the subring generated by $[\mathsf{Ell}],$ $\lambda,$ $c_2.$

By excision, there is an exact sequence $$\mathsf A^{*-1}(\mathsf{Ell}) \rightarrow \mathsf A^*(\F_2) \rightarrow \mathsf A^*(\F_2\smallsetminus \mathsf{Ell}) \rightarrow 0\, .
$$ 
As a result, $\lambda$, $[\mathsf{Ell}],$ $c_2$ and $c_3$ suffice to generate the ring $\mathsf A^*(\mathcal F_2).$ 
We also conclude that the entire Chow ring is tautological. 
\end{proof}

\begin{rem} The proof of Theorem \ref{main} (i)  shows that we can choose the degree $1$ generators to be $\lambda$ and $\left[\mathsf {Ell}\right].$ 
However, the degree 1 generators $$\alpha_1=H, \quad \alpha_2=\left[\mathsf{Ell}\right]$$ 
will be more convenient for us. A simple calculation using
test curves in $\mathcal F_2$ yields the change of basis equation
$$\lambda = \frac{1}{2} H - [\mathsf{Ell}]\, .$$ Indeed, the identity is correct on any complete curve contained in $\mathsf{Ell}$ since $$[\mathsf {Ell}]^2=-\lambda\cdot [\mathsf {Ell}]\, , \ \,H\cdot [\mathsf {Ell}]=0\, ,$$ see \cite [Lemma 1.2]{OG} for the first intersection.  
For a second test curve, we consider a double cover of $\mathbb P^1\times \mathbb P^2$ branched along a smooth $(2, 6)$ hypersurface. This construction yields a family of sextic double planes, for which $\lambda$, $H$ and $[\mathsf{Ell}]$ evaluate to $1$ (using Riemann-Hurwitz), $2$ and $0$ respectively. 
\end{rem}

\begin{rem} \label{grr} We give the details of the Grothendieck-Riemann-Roch calculation alluded to in the proof above. Recall that $c_2, c_3$ are the Chern classes of the bundle $$\mathcal V=\pi_{*}\left(\mathcal L\right) \otimes \det (\pi_{*} \left(\mathcal L)\right)^{-\frac{1}{3}}\,.$$ By Grothendieck-Riemann-Roch, we have \begin{align*}\text{ch}(\pi_{*} \mathcal L)&=\pi_{*}\left(\exp (c_1(\mathcal L))\cdot \text{Todd}(T_\pi^{\text{rel}})\right)\,.\end{align*} 
We expand
$$\exp (c_1(\mathcal L))=1+c_1(\mathcal L)+\frac{c_1(\mathcal L)^2}{2}+\frac{c_1(\mathcal L)^3}{6}+\frac{c_1(\mathcal L)^4}{24}+\ldots\,,$$  $$\text{Todd}(T_\pi^{\text{rel}})=1-\frac{\lambda}{2}+\frac{\lambda^2+t}{12}-\frac{\lambda t}{24}+\frac{-\lambda^4+4\lambda^2t+3t^2}{720}+\ldots\,, $$ where $t=c_2(T_\pi^{\text{rel}}).$ From here, using the definition of the $\kappa$-classes and the fact that $\pi_* (t)=24$, $\pi_{*} (t^2)=88\lambda^2$ by \cite[Section 3]{kvg}, we immediately obtain   
$$\text{ch}_1(\pi_{*} \mathcal L)=\frac{2\kappa_{3, 0}+\kappa_{1, 1}}{12}-\frac{3\lambda}{2},\quad \text{ch}_2(\pi_{*} \mathcal L)=\frac{\kappa_{4,0}+\kappa_{2,1}}{24}-\frac{\lambda}{24}\cdot (2\kappa_{3, 0}+\kappa_{1,1})+\frac{7\lambda^2}{12}\,.$$
Using $c_1(\mathcal V)=0$, we find $$ c_2(\mathcal V)=-\text{ch}_2(\mathcal V)=-\text{ch}_2(\pi_* (\mathcal L))+\frac{1}{6} \left(\text{ch}_1(\pi_{*} (\mathcal L))\right)^2\,.$$ Substituting, we obtain $$c_2=-\frac{1}{24} \left(\kappa_{4, 0}+\kappa_{2, 1}-\frac{1}{36}(2\kappa_{3, 0}+\kappa_{1, 1})^2+{5\lambda^2}\right)\,.$$ The calculation for $c_3$ is entirely similar, but yields a more complicated expression: \begin{align*}c_3=\frac{1}{360}(3\kappa_{1, 2}+10\kappa_{3,1}+6\kappa_{5,0})
&+\frac{1}{23328}(2\kappa_{3,0}+\kappa_{1,1})^3-\frac{1}{432}(2\kappa_{3,0}+\kappa_{1,1})(\kappa_{2,1}+\kappa_{4,0})\\&-\frac{1}{1080}(23\kappa_{1,1}+40\kappa_{3,0})\lambda^2.\end{align*} Both Chow classes $c_2$, $c_3$ are well-defined on the moduli space $\mathcal F_2$ (and not only on the related quotients \eqref{fff1}, \eqref{fff2} which appear in the proof of Theorem \ref{main}(i) above). Furthermore, $c_2$ and $c_3$ belong to the tautological ring.

\end{rem}

\subsection{Blowing up the triple conic locus}\label{TCdiscussion} We would like to study the whole moduli space $\F_2$, not just $\F_2\smallsetminus \mathsf{Ell}$. Shah showed how to construct $\F_2$ as a GIT quotient \cite {S}. The discussion below is just an adaptation of his work in the language of stacks. 

The sextic given by the triple nonsingular conic, $\delta = (x_1x_3-x_2^2)^3$, plays a special role. In fact, in the quotient $[X/G]$, all surfaces in the divisor $\mathsf {Ell}$ correspond to the orbit of $\delta$. Following \cite {S}, in order to construct the moduli space $\mathcal F_2,$ we blow up the orbit of $\delta$ and remove further loci from the blowup. We make this more precise. 

Elliptic surfaces $(S, L)$ in $\mathsf{Ell}$ can also be exhibited as resolutions of certain branched double covers. In this case, the linear series $|L|=|\sigma+2f|$ has fixed part $\sigma$, and $|L-\sigma|$ induces the map $S\to Q\subset \mathbb P^2$, where $Q=\{x_1x_3-x_2^2=0\}\simeq \mathbb P^1$ is the nonsingular conic. However, the linear series $|2L|$ has no fixed components and induces a morphism $$S\to \widehat S\to \mathbb P^5\, .$$ Here $S\to \widehat S$ contracts all nonsingular rational curves on which $L$ restricts trivially. Under the Veronese embedding $$\mathsf {v}: \mathbb P^2\to \mathbb P^5,$$ the conic $Q$ corresponds to a rational normal curve $\mathsf{v}(Q)$ contained in a hyperplane section $\mathbb P^4$ of $\mathbb P^5.$ The cone over ${\mathsf v}(Q)$ is denoted $\Sigma_4^0\subset \mathbb P^5.$ This cone is a flat deformation of the Veronese surface $\mathsf v(\mathbb P^2)$. Blowing up the vertex, we obtain the Hirzebruch surface $\Sigma_4$. The surface $S$ is the resolution of the double cover $$\widehat S\to \Sigma_4^0$$ ramified over the vertex of the cone and over a curve $D$. For $S$ to be a $K3$ surface, $D$ must be a cubic section of $\Sigma_4^0\subset \mathbb P^5$ not passing through the vertex, and must have ADE singularities. 

Following \cite {La, T}, we note the following uniform description of all surfaces in $\mathcal F_2$ as resolutions of branched double covers $\widehat S$. Indeed, in all cases, $\widehat S$ arises as a complete intersection 
$$\{z^2-f_6(x_1, x_2, x_3, y)=f_2(x_1, x_2, x_3, y)=0\}\subset \mathbb P(1, 1, 1, 2, 3)\, ,$$ where $f_2, f_6$ have the indicated weighted degrees. When $f_2(0, 0, 0, 1)\neq 0,$ we can change coordinates so that $f_2(x_1, x_2, x_3, y)=y$ and we recover the sextic double planes. When $f_2(0, 0, 0, 1)=0$, and $f_2(x_1, x_2, x_3, 0)$ is non degenerate, we may assume $f_2(x_1, x_2, x_3, y)=x_1x_3-x_2^2$ and we can write $$f_6(x, y)=y^3+yg_4(x)+g_6(x)\, .$$ This corresponds to the branched double covers of $\Sigma_4^0$. Indeed, we have $\Sigma_4^0=\mathbb P(1, 1, 4)$, see \cite {D}, and the latter is cut out by $x_1x_3-x_2^2=0$ in $\mathbb P(1, 1, 1, 2).$

We return to the moduli space $\mathcal F_2$. There is a $G$-equivariant cubing map
\[
m_3:\p \Sym^2 V^*\rightarrow X\, .
\]
Let $\Delta_2\subset \p \Sym^2 V^*$ denote the divisor parametrizing singular conics. The induced map
\[
\p \Sym^2 V^*\smallsetminus \Delta_2\rightarrow X\smallsetminus (\mathsf{QP}\cup \mathsf{ML})
\]
is a $G$-equivariant closed embedding whose image is the locus $\mathsf{TC}$ parametrizing cubes of nonsingular conics. A local calculation shows that \begin{equation}\label{localcal}\mathsf{TC}\subset \mathsf{CTP} .\end{equation} All nonsingular conics are equivalent under the action of $G$. Let $W=\cc^2$ as an $\SL_2$ representation. We can identify $V=\Sym^2 W$, as the conic is a $\p^1$ embedded in $\p^2$ via the Veronese map. By Luna's \'etale slice theorem, the quotient $[X/G]$ is locally identified around $[\mathsf{TC}/G]$ with a quotient by $\PSL(W)$ of the normal slice to the orbit of the triple conic. 
The normal slice is 
\[
\Sym^8 W^*\oplus \Sym^{12} W^*\subset \Sym^6(\Sym^2 W^*)\, .
\]
This identification is more thoroughly explained in \cite[Section 5]{S}, \cite [Lemma 4.9]{La}, and in Section \ref{filcpt} below. We assign weights $2$ and $3$ to the two summands of the normal slice. \footnote{The moduli space of elliptic $K3$ surfaces requires weights $(4, 6)$ on the two summands $\Sym^8W^*$ and $\Sym^{12} W^*$, see \cite{M}. However, our calculation is carried out on the moduli of plane sextics (while ensuring that the relevant classes get identified correctly on ${\mathcal F}_2$). By doing so, we do not account for the additional $\mathbb Z/2\mathbb Z$-symmetry of the double cover. In order to match the automorphisms on the exceptional divisor, the correct weights are $(2, 3)$. The geometry here is consistent with the constructions in \cite{S}.} Next, we carry out the $(2, 3)$ weighted blowup of the locus $$\mathsf {TC}\subset X\smallsetminus (\mathsf{QP}\cup \mathsf {ML})\, ,$$ and take the quotient. 

We indicate the loci that need to be removed from the blowup to obtain $\mathcal F_2$. On the sextic ADE locus, in addition to the loci $\mathsf{ML}$ and $\mathsf {QP}$ already considered, we also need to remove the images of the double conic and double cubic locus $\mathsf {DC}$ and the consecutive triple point locus $\mathsf{CTP}$. 

Over the exceptional divisor $[\pp_{(2, 3)}(\Sym^8 W^*\oplus \Sym^{12} W^*)/\PSL(W)]$, there is an open subset corresponding to elliptic K3 surfaces. 
By \cite{M}, a pair of binary forms $(A,B)\in \Sym^8 W^*\oplus \Sym^{12} W^*$ corresponds to an elliptic K3 surface if and only if
\begin{itemize} 
\item [(i)] $4A^3+27B^2$ is not identically zero.
\item [(ii)] For each point $q\in \p^1$, the order of vanishing of $A$ at $q$ is at most $3$ or the order of vanishing of $B$ at $q$ is at most $5$.
\end{itemize}
This description was used in the calculations of \cite {CK}. We note that these are exactly the same requirements singled out in \cite [Theorem 4.3]{S}. Indeed, using the above notation, we can view $g_4$ and $g_6$ as binary forms $A, B$ of degrees $8$ and $12$ on $\mathbb P^1$, respectively. From the exact description of $\widehat S$, we find that the branch curve takes the form $D=\{y^3+yA+B=0\}$. Condition (i) corresponds to reduced branch curves $D$; this is discussed in \cite [Theorem 4.3, Case 2]{S}. Condition (ii) ensures $D$ has no consecutive triple points; this was noted in the proof of \cite [Theorem 4.3, Case (1.i)]{S} and can be seen as follows: locally near $y=u=0$, the equation $y^3+yA(u)+B(u)$ is in the ideal $(y, u^2)^3$ if and only if $A, B$ vanish with order at least $4$ and $6$ at the origin. The complement of (i) corresponds to the restriction of the strict transform of the locus $\mathsf {DC}$ to the exceptional divisor of the weighted blowup. The complement of (ii) corresponds to the restriction of the strict transform of $[\mathsf{CTP}/G]$ to the exceptional divisor. 

In conclusion, the moduli space $\mathcal F_2$ is obtained  by removing the strict transform of (the images of) $\mathsf {DC}$ and $\mathsf {CTP}$ from the weighted blowup. 

\subsection {Strategy} The following procedure will be used to complete the proof of parts (ii) and (iii) of Theorem \ref{main}. 
\begin{itemize}
    \item[(a)] We start with 
    \[
    \mathsf A_G^* (X)=\qq[H,c_2,c_3]/(p)\, ,
    \]
    where $$p=H^{28}+c_1^G(\Sym^6 V^*)H^{27}+\cdots +c_{28}^G(\Sym^6 V^*)\, .$$
    \item[(b)] We impose relations by removing $\mathsf{ML}$ (the locus of multiple lines) and $\mathsf{QP}$ (the locus of quadruple points). That is, we compute the Chow ring $\mathsf A^*_G(Y)$ where $$Y=X\smallsetminus (\mathsf{ML}\cup \mathsf{QP})\, .$$ This is carried out in Sections \ref{ML} and \ref{QP}.
    \item[(c)] We perform the weighted blowup $\widetilde{Y}$ of $Y$ at $\mathsf{TC}$ (the triple conic locus) and compute the Chow ring $\mathsf{A}^*_G(\widetilde{Y})$ . This is carried out in Section \ref{relctp}.
    \item[(d)] Finally, we impose further relations by removing from $\widetilde{Y}$ the strict transforms of $\mathsf{CTP}$ (the consecutive triple point locus) and $\mathsf{DC}$ (the union of the double conic and double cubic locus). This is carried out in Section \ref{relctp}.
\end{itemize}
From the above discussion of Shah's work, steps (a)--(d) account for all of the relations and, therefore, 
yield a presentation of
$\mathsf{A}^*(\mathcal{F}_2)$.
The calculation of the Chow Betti numbers then proves parts (ii) and (iii) of
Theorem 1.

\section{Relations from the locus of multiple lines}\label{ML}
We impose here relations obtained by removing $\mathsf{ML}$. We follow the notation from Section \ref{Shah}.
We begin by studying the map 
\[
m_1:\p \Sym^4 V^*\times \p V^*\rightarrow X=\p \Sym^6 V^*, 
\]
whose image is $\mathsf{ML}$. 
Let $h_1$ and $h_2$ denote the hyperplane classes of $\p \Sym^4 V^*$ and $\p V^*$ pulled back to the product $\p \Sym^4 V^*\times \p V^*$. As before, let $H$ be the hyperplane class of $X$. 

For notational convenience, we will
denote the map $m_1$ simply by $m$.
Moreover,
 the same symbols $m,h_1, h_2, H$ will be used to
 denote the corresponding maps and classes
 on the quotients by $G$.
 
\begin{lem}\label{nonreduced}
The image of 
\[
\mathsf A_G^{*}(\p \Sym^4 V^*\times \p V^*)\xrightarrow{m_*}\mathsf A_G^{*}(X)
\]
is the ideal generated by the three classes
\[
\sum_{j=0}^{11+i}\alpha_{i,j}H^j
\]
for $0\leq i\leq 2$, where the coefficients $\alpha_{ij}\in \mathsf A^*_G$ are recursively given by the formula
\[
\alpha_{i,k}=\gamma_*((h_1+2h_2)^{27-k}\cdot h_2^i)-\sum_{j=k+1}^{11+i} \alpha_{i,j}s^G_{j-k}(\Sym^6 V^*)\, . 
\] Here, $\gamma_*:\mathsf A_G^*(\p \Sym^4 V^*\times \p V^*)\rightarrow \mathsf A_G^*$ denotes the equivariant pushforward to a point.
\end{lem}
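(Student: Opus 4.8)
The plan is to combine the projection formula with the projective bundle structure of $\mathsf A^*_G(X)=\mathsf A^*_G[H]/(p)$. The one geometric input is the computation of $m^*H$: since $m(f,g)=fg^2$, the tautological subbundle $\mathcal{O}_X(-1)\subset \Sym^6 V^*$ pulls back to the external tensor product of $\mathcal{O}(-1)$ on $\p\Sym^4 V^*$ and $\mathcal{O}(-2)$ on $\p V^*$, so that
\[
m^*H=h_1+2h_2.
\]
Everything else is formal. I would prove the lemma in two steps: first that $\im(m_*)$ is the ideal generated by the three classes $m_*(h_2^i)$, $i=0,1,2$; then that each $m_*(h_2^i)$ is given by the stated recursion.

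For the first step, the projective bundle theorem presents $\mathsf A^*_G(P)$, with $P=\p\Sym^4 V^*\times \p V^*$, as a free $\mathsf A^*_G$-module on $\{h_1^a h_2^b:0\le a\le 14,\ 0\le b\le 2\}$. Setting $u:=h_1+2h_2=m^*H$ and ordering monomials by the power of $h_1$, the transition to $\{u^a h_2^b\}$ is triangular with unit diagonal, so this is again a free basis; in particular $\mathsf A^*_G(P)$ is generated as a module over $\mathsf A^*_G[u]$ by $1,h_2,h_2^2$. The projection formula $m_*(u^a\cdot\beta)=H^a\cdot m_*(\beta)$ then converts multiplication by $u$ into multiplication by $H$, so $\im(m_*)$ is the $\mathsf A^*_G[H]$-submodule of $\mathsf A^*_G(X)$ generated by $m_*(1),m_*(h_2),m_*(h_2^2)$. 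Since $\mathsf A^*_G(X)$ is generated over $\mathsf A^*_G$ by $H$, this submodule is exactly the ideal generated by the three classes (and it is genuinely an ideal, being closed under multiplication by $H$ by the projection formula again).

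For the second step, I would compute each $m_*(h_2^i)$ in the free $\mathsf A^*_G$-basis $1,H,\dots,H^{27}$ of $\mathsf A^*_G(X)$. By grading, $m_*(h_2^i)$ is homogeneous of codimension $11+i$ (the map $m$ raises codimension by $\dim X-\dim P=27-16=11$), so $m_*(h_2^i)=\sum_{j=0}^{11+i}\alpha_{i,j}H^j$ with $\alpha_{i,j}\in \mathsf A^{11+i-j}_G$. To isolate the coefficients I pair against powers of $H$ and push to a point: using $\pi\circ m=\gamma$, the projection formula, and $m^*H=h_1+2h_2$,
\[
\gamma_*\big((h_1+2h_2)^{27-k}h_2^i\big)=\pi_*\big(H^{27-k}\cdot m_*(h_2^i)\big)=\sum_{j}\alpha_{i,j}\,\pi_*\big(H^{27-k+j}\big).
\]
The projective bundle pushforward $\pi_*(H^{27+s})=s^G_s(\Sym^6 V^*)$, together with $\pi_*(H^m)=0$ for $m<27$, collapses the right side to $\alpha_{i,k}+\sum_{j>k}\alpha_{i,j}s^G_{j-k}(\Sym^6 V^*)$; solving for $\alpha_{i,k}$ and using $\alpha_{i,j}=0$ for $j>11+i$ gives precisely the stated downward recursion, seeded at $k=11+i$ where the sum is empty.

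The computation is essentially formal once $m^*H=h_1+2h_2$ is in hand, and I expect the main point requiring care to be the module reduction of the first step: verifying that exactly the three classes $m_*(h_2^i)$ are needed, i.e. that all dependence on $h_1$ can be absorbed into multiplication by $H$ through $u=m^*H$, and that the resulting $\mathsf A^*_G[H]$-submodule coincides with the ideal. The recursion is then a clean bookkeeping consequence of the Segre/pushforward identity.
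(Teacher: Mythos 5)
Your proposal is correct and follows essentially the same route as the paper: the identity $m^*H=h_1+2h_2$, the reduction via the projection formula to the three pushforwards $m_*(h_2^i)$, and the extraction of the coefficients $\alpha_{i,k}$ by multiplying by $H^{27-k}$ and pushing to a point using $\rho_*(H^{27+s})=s^G_s(\Sym^6 V^*)$. Your triangular change-of-basis argument from $\{h_1^a h_2^b\}$ to $\{u^a h_2^b\}$ is simply an explicit justification of the step the paper handles by citing the projective bundle formula for the $\mathsf A_G^*(X)$-module structure on $\mathsf A_G^{*}(\p \Sym^4 V^*\times \p V^*)$.
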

\begin{proof} Consider the pullback $$m^*:\mathsf A_G^*(X)\to \mathsf A_G^{*}(\p \Sym^4 V^*\times \p V^*),$$ and note that $m^*H=h_1+2h_2$. By the projective bundle formula \cite[Chapter 3]{Fulton}, the $\mathsf A_G^{*}(X)$-module $\mathsf A_G^{*}(\p \Sym^4 V^*\times \p V^*)$ is generated by the classes $1, h_2, h_2^2$. Therefore, the image of 
\[
\mathsf A_G^{*}(\p \Sym^4 V^*\times \p V^*)\xrightarrow{m_*}\mathsf A_G^{*}(X)
\]
is the ideal generated by the pushforwards $m_*(h_2^i)$ for $0\leq i \leq 2$. We write
\[
m_*(h_2^i)=\sum_{j=0}^{11+i} \alpha_{i,j}H^j\, ,
\]
where $\alpha_{i,j}\in \mathsf A^{11+i-j}_G$. We want to determine the coefficients $\alpha_{i,j}$. To pick out $\alpha_{i,k}$, multiply by $H^{27-k}$. We have
\[
H^{27-k}\cdot m_*(h_2^i)=\sum_{j=0}^{11+i}\alpha_{i,j}H^{j+27-k}\, .
\]
Consider the commutative diagram
\[
\begin{tikzcd}
{\left[\left(\p \Sym^4 V^*\times \p V^*\right)/G\right]} \arrow[rd, "\gamma"'] \arrow[r, "m"] & {[X/G]} \arrow[d, "\rho"] \\  & \, \mathrm{B}G\, .                 
\end{tikzcd}
\]
Then, 
\[
\rho_*(H^{27-k}\cdot m_*(h_2^i))=\rho_*\left(\sum_{j=0}^{11+i}\alpha_{i,j}H^{j+27-k}\right)=\sum_{j=0}^{11+i}\alpha_{i,j}s^G_{j-k}(\Sym^6 V^*)\, .
\]
On the other hand, by the projection formula,
\[
\rho_*(H^{27-k}\cdot m_*(h_2^i))=\rho_*m_*(m^*H^{27-k}\cdot h_2^i)=\gamma_*((h_1+2h_2)^{27-k}\cdot h_2^i)\, .
\]
Thus,
\[
    \sum_{j=0}^{11+i}\alpha_{i,j}s^G_{j-k}(\Sym^6 V^*)=\gamma_*((h_1+2h_2)^{27-k}\cdot h_2^i)\, .
\]
Note that $s^G_0(\Sym^6 V^*)=1$ and $s^G_{j-k}(\Sym^6 V^*)=0$ for $j-k<0$. Simplifying and rearranging, we see that
\begin{equation}\label{recursive}
\alpha_{i,k}=\gamma_*((h_1+2h_2)^{27-k}\cdot h_2^i)-\sum_{j=k+1}^{11+i} \alpha_{i,j}s^G_{j-k}(\Sym^6 V^*)\, . 
\end{equation}

\end{proof}
\begin{rem}\label{nonreducedremark}
Equation \eqref{recursive} gives a recursion for computing $\alpha_{i,k}$. The pushforwards $\gamma_{*}(h_1^a \cdot h_2^b)$ can be immediately determined via the projective bundle geometry \cite[Chapter 3]{Fulton}. Thus, we can express the classes $\alpha_{i,k}$ in terms of the generators $H,c_2,c_3$ of $\mathsf A^{*}_G(X)$. As a result, the image of $m_*$ specified by Lemma \ref{nonreduced}, and consequently the relations obtained by removing the locus $\mathsf {ML}$, can be made explicit. 

We carried out this procedure in the Macaulay2 package Schubert2 \cite{M2,S2}. The interested reader can consult \cite {COP} for the implementation. 

\noindent $\bullet$ For $i=0$,  the polynomial $\sum_{j=0}^{11}\alpha_{0,j}H^j$ is given by
\begin{align*}
  1555200c_2^4c_3&+9552816c_2c_3^3+(518400c_2^5+11162448c_2^2c_3^2)H+(5716656c_2^3c_3+56538324c_3^3)H^2\\&+(712080c_2^4+8743140c_2c_3^2)H^3+3852036c_2^2c_3H^4+(311700c_2^3+12450672c_3^2)H^5\\&-519696c_2c_3H^6+107640c_2^2H^7+243324c_3H^8-9900c_2H^9+480H^{11}\,.
\end{align*}
$\bullet$ For $i=1$, the polynomial $\sum_{j=0}^{12}\alpha_{1,j}H^j$ is given by
\begin{align*}
    1866240c_2^3c_3^2&+15431472c_3^4+ (362880c_2^4c_3+4968864c_2c_3^3)H+ (17280c_2^5+5732856c_2^2c_3^2)H^2\\&+(1278288c_2^3c_3+47364588c_3^3)H^3-(74040c_2^4+7471926c_2c_3^2)H^4\\&+ 1636848c_2^2c_3H^5-(51630c_2^3-4266918c_3^2)H^6 -598968c_2c_3H^7\\&+36810c_2^2H^8+40392c_3H^9-2850c_2H^{10}+30H^{12}\,.
\end{align*}
$\bullet$ For $i=2$, the polynomial $\sum_{j=0}^{13}\alpha_{2,j}H^j$ is given by
\begin{align*}
1259712c_2^2c_3^3&+ (1765152c_2^3c_3^2+42620256c_3^4)H-(565920c_2^4c_3-19960020c_2c_3^3)H^2\\&+(61056c_2^5+7261812c_2^2c_3^2)H^3-(426564c_2^3c_3-28062369c_3^3)H^4\\&-(15404c_2^4+8744085c_2c_3^2)H^5+1276371c_2^2c_3H^6-(63167c_2^3-1147635c_3^2)H^7\\&-218646c_2c_3H^8+12903c_2^2H^9+ 5139c_3H^{10} -389c_2H^{11}+ H^{13}\,.
\end{align*}
\end{rem}

\section{Relations from quadruple points}\label{QP}
We impose here relations obtained by removing the locus $\mathsf{QP}$ of sextics with quadruple points. A sextic $f$ has a quadruple point if locally analytically it lies in the ideal $(x,y)^4$. In this section, we will use standard material on jet bundles, see \cite [Proposition 3.2]{CanningLarson} for a review of the necessary background. 

Denote by $\pi:[\p V/G]\rightarrow \mathrm{B}G$ the universal $\p^2$-bundle and let $z$ be the hyperplane class. Then $\pi_*\O(6)=\Sym^6 V^*$ as a $G$-equivariant bundle. Its projectivization is $[X/G]$ and the projectivization of $\pi^*\pi_*\O(6)$ is $\left[\left(\p V\times X\right)/G\right]$. Consider the rank $10$ bundle of principal parts $P^3(\O(6))$ relatively over $[\p V/G]\to \mathrm{B}G$. It comes equipped with an equivariant evaluation map\[
\pi^*\Sym^6 V^*=\pi^*\pi_*\O(6)\rightarrow P^3(\O(6))\,,
\]
which on fibers takes a sextic to its expansion along a third order neighborhood. This evaluation map is surjective because $\O(6)$ is $6$-very ample and hence also $3$-very ample. The kernel is thus an equivariant vector bundle $K_{\quadr}$ of rank $18$. The bundle $K_{\quadr}$ parametrizes pairs $(f,p)$ where $f$ is a sextic with a quadruple or worse point at $p$. After projectivizing, the following diagram summarizes the above discussion:
\begin{equation}\label{quaddiagram}
\begin{tikzcd}
{[\p K_{\quadr}/G]} \arrow["\rho''",rd] \arrow[r,"j"] & {\left[\left(\p V\times X\right)/G\right]} \arrow[d,"\rho'"] \arrow[r,"\pi'"] & {[X/G]} \arrow[d,"\rho"] \\
                                       & {[\p V/G]} \arrow[r, "\pi"]               & \mathrm{B}G \,.              
\end{tikzcd}
\end{equation}
The image of $\pi'\circ j$ is the locus $[\mathsf {QP}/G]$. Moreover, since $\pi'\circ j$ is proper, the induced map on Chow groups with $\qq$-coefficients
\[
\mathsf{A}_*([\p K_{\quadr}/G])\rightarrow \mathsf{A}_*([\mathsf {QP}/G])
\]
is surjective. Indeed, one sees this by applying \cite[Lemma 3.8]{V} to the map
\[
(\p K_{\quadr}\times U)/G\rightarrow (\mathsf{QP}\times U)/G\,,
\]
where $U$ is an open subset of a representation $V$ of $G$ on which $G$ acts freely and the codimension of the complement of $U$ in $V$ is sufficiently large as in \cite[Definition-Proposition 1]{EG}. Therefore, we can compute the image of 
\[
\mathsf{A}_*([\mathsf {QP}/G])\rightarrow \mathsf{A}_*([X/G])
\]
by computing the image of
\[
\mathsf{A}_*([\p K_{\quadr}/G])\rightarrow \mathsf{A}_*([X/G])\,.
\]

\begin{lem}\label{quadrelations}
The ideal of relations obtained from removing the locus of sextics with quadruple points is generated by the classes
\[
\sum_{i=0}^{10}\rho^*\pi_*\left(z^j\cdot c^G_i(P^3(\O(6)))\right)\cdot H^{10-i}\,,
\]
where $0\leq j\leq 2$ and $z$ is the hyperplane class of $\pi$.
\end{lem}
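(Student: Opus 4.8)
The plan is to read off the image of the proper pushforward directly from the two projective-bundle structures in diagram~\eqref{quaddiagram}. By excision, the relations imposed by removing $\mathsf{QP}$ are precisely the image of $\mathsf A_*([\mathsf{QP}/G])\to \mathsf A_*([X/G])$, and by the surjectivity established just above this coincides with the image of $(\pi'\circ j)_*$. First I would cut the problem down to finitely many generators. Since $[\p K_{\quadr}/G]\to[\p V/G]$ is the projectivization of the rank $18$ bundle $K_{\quadr}$, the projective bundle formula presents $\mathsf A^*_G(\p K_{\quadr})$ as a free module over $\mathsf A^*_G(\p V)$ on $1,\eta,\dots,\eta^{17}$, where $\eta$ is the relative hyperplane class; and $\mathsf A^*_G(\p V)$ is free over $\mathsf A^*_G$ on $1,z,z^2$. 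Hence $\mathsf A^*_G(\p K_{\quadr})$ is generated over $\mathsf A^*_G$ by the classes $z^j\eta^k$ with $0\le j\le 2$ and $0\le k\le 17$.

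The key observation is that $\eta$ is pulled back from $X$. Indeed, $\p K_{\quadr}\subset \p(\pi^*\Sym^6 V^*)=\p V\times X$ is a projective subbundle, so its relative $\O(1)$ is the restriction of that of the ambient bundle, namely $(\pi')^*\O_X(1)$; thus $\eta=(\pi'\circ j)^* H$. The projection formula then gives $(\pi'\circ j)_*(z^j\eta^k)=(\pi'\circ j)_*(z^j)\cdot H^k$. Consequently the image, being an $\mathsf A^*_G$-module spanned by the classes $(\pi'\circ j)_*(z^j)\cdot H^k$ and closed under multiplication by $H$ (and since $\mathsf A^*_G(X)$ is generated over $\mathsf A^*_G$ by $H$), is exactly the ideal of $\mathsf A^*_G(X)$ generated by the three classes $(\pi'\circ j)_*(z^j)$ for $0\le j\le 2$. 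It remains to evaluate these three pushforwards.

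I would compute them by factoring $\pi'\circ j$ into the closed embedding $j$ followed by the projection $\pi'$. Because the evaluation map $\pi^*\Sym^6 V^*\to P^3(\O(6))$ is surjective with kernel the subbundle $K_{\quadr}$, the locus $\p K_{\quadr}$ is the zero scheme of the tautological section of $(\rho')^*P^3(\O(6))\otimes \O(H)$, and this section is regular since $\p K_{\quadr}$ has the expected codimension $10=\rank P^3(\O(6))$. Hence
\[
j_*(1)=[\p K_{\quadr}]=c_{10}\big((\rho')^*P^3(\O(6))\otimes\O(H)\big)=\sum_{i=0}^{10}(\rho')^* c^G_i(P^3(\O(6)))\cdot H^{10-i},
\]
using the identity $c_{\rank E}(E\otimes L)=\sum_i c_i(E)\,c_1(L)^{\rank E-i}$. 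Applying the projection formula to $z^j$ and pushing forward along $\pi'$, with $H=(\pi')^*H$ and the base-change identity $(\pi')_*(\rho')^*=\rho^*\pi_*$ for the Cartesian square in~\eqref{quaddiagram} (valid since $\rho$ is flat), yields
\[
(\pi'\circ j)_*(z^j)=\sum_{i=0}^{10}\rho^*\pi_*\big(z^j\cdot c^G_i(P^3(\O(6)))\big)\cdot H^{10-i},
\]
which is exactly the asserted generator.

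The step I expect to be the main obstacle is the identification $[\p K_{\quadr}]=c_{10}((\rho')^*P^3(\O(6))\otimes\O(H))$: it rests on $K_{\quadr}$ being a genuine subbundle, so that $\p K_{\quadr}$ is a projective subbundle of the expected codimension and the tautological section is regular. This is precisely the surjectivity of the evaluation map, which comes from the $3$-very ampleness of $\O(6)$ noted before the lemma. Once this geometric input is secured, the remainder is a formal manipulation with the projective bundle formula, the projection formula, and flat base change.
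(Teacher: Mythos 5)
Your proposal is correct and follows essentially the same route as the paper: both reduce to the module generators $z^j$ (times powers of the relative hyperplane class, which is pulled back from $X$), identify $[\p K_{\quadr}]^G$ with $c^G_{10}(\rho'^*P^3(\O(6))\otimes \O_{\rho'}(1))$ via the linear embedding in $\p V\times X$, and finish with the projection formula and the base-change identity $\pi'_*\rho'^*=\rho^*\pi_*$. Your added justification that the tautological section is regular (from the $3$-very ampleness of $\O(6)$) is exactly the geometric input the paper uses when it calls $\p K_{\quadr}$ linearly embedded.
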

\begin{proof}
From the explicit calculation of the Chow ring of projective bundles it follows that every class $\alpha\in \mathsf A^*_G(\p K_{\quadr})$ is a pullback of a class $\beta\in \mathsf A^*_G(\p V \times X)$. Then, by the projection formula,
\[
j_*\alpha=j_*j^*\beta= [\p K_{\quadr}]^G\cdot\beta\,.
\]
Because $\p K_{\quadr}$ is linearly embedded in $\p V\times X$, its equivariant fundamental class is given by 
\[
[\p K_{\quadr}]^G=c^G_{10}(\rho'^*P^3(\O(6))\otimes \O_{\rho'}(1))=\sum_{i=0}^{10} \rho'^*c^G_i(P^3(\O(6)))\cdot \pi'^*H^{10-i}\,.
\]
Every class $\beta\in \mathsf A^*_G(\p V \times X)$ is of the form 
\[
\beta=\beta_0+\beta_1z+\beta_2z^2\,,
\]
where $\beta_i\in \mathsf A_G^{*}(X)$ and $z$ is the hyperplane class of the projective bundle $\left[\left(\p V\times X\right)/G\right]\rightarrow [X/G]$. Hence the ideal generated by pushforwards of classes on $[\p K_{\quadr}/G]$ is just the ideal generated by the classes
\[
\pi'_*\bigg(z^j\cdot \bigg(\sum_{i=0}^{10} \rho'^*c^G_i(P^3(\O(6)))\cdot \pi'^*H^{10-i}\bigg)\bigg)=\sum_{i=0}^{10}\rho^*\pi_*(z^j\cdot c^G_i(P^3(\O(6))))\cdot H^{10-i}
\]
for $0\leq j\leq 2$, where to obtain the equality we have used \cite[Proposition 1.7]{Fulton}.
\end{proof}
\begin{rem}\label{quadremark}
The $3$ relations provided by Lemma \ref{quadrelations} can be written explicitly in terms of the generators $H,c_2,c_3$ of $\mathsf A_G^{*}(X)$. We note: 
\begin{itemize}
    \item[(i)] The equivariant Chern classes of the jet bundle $P^3(\mathcal O(6))$ are computed using the filtration by the vector bundles $\mathcal O(6)\otimes \Sym^k \Omega_{\pi}$ for $0\leq k\leq 3$ . 
    \item[(ii)] The pushforwards $\pi_* (z^j)$ are immediately found using the projective bundle geometry.
    \end{itemize}
$\bullet$ The $j=0$ case yields
\begin{align*}
-1574&64c_2c_3^2-236196c_2^2c_3H-61020c_2^3H^2+434484c_3^2
H^2+382725c_2c_3H^3+76545c_2^2H^4\\&-66339c_3H^5-13230c_2H^6+405H^8\,.
\end{align*}
$\bullet$ The $j=1$ case yields
\begin{align*}
518&40c_2^3c_3-122472c_3^3+(17280c_2^4-539460c_2c_3^2)H-446148c_2^2c_3H^2\\&-(91320c_2^3-339309c_3^2)H^3+244215c_2c_3H^4+39690c_2^2H^5-17577c_3H^6-2880c_2H^7+30H^9\,.
\end{align*}
$\bullet$ The $j=2$ case yields
\begin{align*}
     2099&52c_2^2c_3^2+(253152c_2^3c_3-338256c_3^3)H+(61056c_2^4-812592c_2c_3^2)H^2-475308c_2^2c_3H^3\\&-(76460c_2^3-178632c_3^2)H^4+104733c_2c_3H^5+13293c_2^2H^6-3267c_3H^7-390c_2H^8+H^{10}\,.
\end{align*}
\end{rem}

\section {The weighted blowup} \label{relctp}

\subsection{Overview} Lemmas \ref{nonreduced} and \ref{quadrelations} allow us to compute the Chow ring of the stack \[\left[\left(X\smallsetminus (\mathsf{ML}\cup \mathsf{QP})\right)/G\right]\,.\] The next step in the procedure is to perform the weighted blowup along the locus $\mathsf{TC}$.

\subsection{The Chow ring of weighted blowups}
First, we discuss the intersection theory of weighted blowups in general, following Arena and Obinna \cite{Arenathesis, AO}.

We will require weighted Chern and Segre classes. Let $E\to B$ be a vector bundle over a base $B$ equipped
with a fiberwise action of
$\mathbb{G}_m$. We define $$P_{E}(t)=c^{\mathbb G_m}_{\text{top}}(E)\, ,$$  a polynomial of degree $\text{rk }E$ in the equivariant parameter $t$. In case the $\mathbb{G}_m$-action is of weight $1$, $P_E(t)$ recovers the total Chern polynomial of $E$. For the case of $(2, 3)$-weighted blowups needed below, $E=E_2\oplus E_{3}$ is split into subbundles of weights $2$, $3$ respectively.  Then, we have $$P_E(t)=\prod_{i=1}^{\text{rk }E_2}(2t+\alpha_i) \cdot \prod_{i=1}^{\text{rk } E_{3}}(3 t+\beta_i)\, ,$$ where $\alpha_i$ and  $\beta_i$ are the Chern roots of $E_2$ and $E_{3}$. The weighted Segre class of $E$ is given by $$s^{\mathsf{wt}}(E)=\frac{1}{P_E(1)}\, ,$$ see \cite[Remark 8.1.2]{Arenathesis}.  

For the blowup geometry, let $i:Z\hookrightarrow Y$ be a closed embedding of codimension $d$ with $Z$ and $Y$ both nonsingular. Let $N$ be the normal bundle weighted by a $\gg_m$-action. Let $P_N(t)$ be the $\gg_m$-equivariant top Chern class $c_d^{\gg_m}(N)$.

Let $f:\tilde{Y}\rightarrow Y$ denote the corresponding weighted blowup along $Z$, and $j:\tilde{Z}\hookrightarrow \tilde{Y}$ the exceptional divisor. The exceptional divisor $g:\tilde{Z}\to Z$ is a weighted projective bundle.
We have $$\mathsf A^*(\tilde{Z})=\mathsf A^*(Z)[t]/(P_N(t))$$ where $t$ is set to the hyperplane class of $g$. 

By \cite[Theorem 5.5]{AO}, there is a commutative diagram 
\[
\begin{tikzcd}
\mathsf A^*(\tilde{Z}) \arrow[r, "j_*"]          & \mathsf A^*(\tilde{Y})          \\
\mathsf A^*(Z) \arrow[r, "i_*"] \arrow[u, "f^!"] & \mathsf A^*(Y) \arrow[u, "f^*"]
\end{tikzcd}
\]
where $f^{!}(\alpha)=g^*\alpha\cdot \delta$. Here, $$\delta=\frac{P_N(t)-P_N(0)}{t}\,.$$ Furthermore, we have the following structure theorem for the Chow groups of weighted blowups, which we present in the simplified setting where the pullback $\mathsf A^*(Y)\rightarrow \mathsf A^*(Z)$ is surjective \cite [Corollary 6.5]{AO}. When all weights are $1$, Theorem \ref{blowupsequence} below recovers \cite[Appendix, Theorem 1]{Keel}.
\begin{thm}[Arena-Obinna]\label{blowupsequence}
    In the above setting, if $i^*:\mathsf A^*(Y)\rightarrow \mathsf A^*(Z)$ is surjective, then
    \[
\mathsf{A}^*(\tilde{Y})\cong \mathsf{A}^*(Y)[t]/(t\cdot \ker(i^*),Q(t))\,,
    \]
    where
    $Q(t)=P_N(t)-P_N(0)+[Z]$. Furthermore, $[\tilde{Z}]=-t$.
\end{thm}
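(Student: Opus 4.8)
The plan is to adapt Keel's proof of the classical blowup presentation \cite[Appendix, Theorem 1]{Keel} to the weighted setting, using the commutative square of \cite[Theorem 5.5]{AO} and the weighted projective bundle formula $\mathsf A^*(\tilde Z)=\mathsf A^*(Z)[t]/(P_N(t))$ as the two main inputs. Concretely, I would define a graded ring homomorphism
\[
\Phi:\mathsf A^*(Y)[t]\longrightarrow \mathsf A^*(\tilde Y),\qquad \Phi|_{\mathsf A^*(Y)}=f^*,\quad \Phi(t)=-[\tilde Z],
\]
and then prove that $\Phi$ is surjective with kernel exactly the ideal $(t\cdot\ker(i^*),\,Q(t))$. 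Granting this, the induced map $\bar\Phi$ is the asserted isomorphism, and the identity $[\tilde Z]=-t$ is built into the definition of $\Phi$.

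First I would check that the two families of relations lie in $\ker\Phi$. Since $f\circ j=i\circ g$ we have $j^*f^*=g^*i^*$, so for $\alpha\in\ker(i^*)$,
\[
[\tilde Z]\cdot f^*\alpha=j_*\big(j^*f^*\alpha\big)=j_*\big(g^*i^*\alpha\big)=0,
\]
which gives $t\cdot\ker(i^*)\subset\ker\Phi$. For $Q(t)$ I would use the self-intersection formula $j^*[\tilde Z]=-t$ together with the square. From $j^*[\tilde Z]=-t$ and the projection formula one gets $[\tilde Z]^k=(-1)^{k-1}j_*(t^{k-1})$ for $k\ge 1$, hence $\Phi(\alpha t^k)=-j_*(g^*i^*\alpha\cdot t^{k-1})$. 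Writing $P_N(t)-P_N(0)=\sum_{k\ge1}\bar p_k t^k$ and summing term by term,
\[
\Phi\big(P_N(t)-P_N(0)\big)=-j_*\Big(\tfrac{P_N(t)-P_N(0)}{t}\Big)=-j_*(\delta).
\]
On the other hand the square gives $f^*[Z]=f^*i_*(1)=j_*f^!(1)=j_*(\delta)$, so $\Phi(P_N(t)-P_N(0))=-f^*[Z]$ and therefore $\Phi(Q(t))=\Phi(P_N(t)-P_N(0))+f^*[Z]=0$. The surjectivity of $i^*$ is what allows the coefficients of $P_N(t)$ to be lifted to $\mathsf A^*(Y)$, the resulting ambiguity being absorbed precisely by the relation $t\cdot\ker(i^*)$. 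Thus $\Phi$ descends to $\bar\Phi$ on the quotient.

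Surjectivity of $\bar\Phi$ I would obtain from excision: the sequence $\mathsf A^*(\tilde Z)\xrightarrow{j_*}\mathsf A^*(\tilde Y)\to\mathsf A^*(\tilde Y\smallsetminus\tilde Z)\to0$, together with the isomorphism $\tilde Y\smallsetminus\tilde Z\cong Y\smallsetminus Z$ and the surjection $\mathsf A^*(Y)\twoheadrightarrow\mathsf A^*(Y\smallsetminus Z)$, shows that $\mathsf A^*(\tilde Y)$ is generated by $f^*\mathsf A^*(Y)$ and $j_*\mathsf A^*(\tilde Z)$. Since $i^*$ is surjective every class on $\tilde Z$ is a combination of terms $g^*i^*\alpha\cdot t^{k}$, and the formula $\Phi(\alpha t^{k+1})=-j_*(g^*i^*\alpha\cdot t^{k})$ exhibits each corresponding $j_*$-class in the image of $\Phi$. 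Hence $\bar\Phi$ is surjective.

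The main obstacle is injectivity, i.e.\ showing there are no further relations. Here I would compare module structures over $\mathsf A^*(Y)$. On the source, the leading coefficient of $Q(t)$ in $t$ is the product of the weights (a nonzero scalar), so $Q$ is monic up to a unit of $t$-degree $d=\mathrm{rk}\,N$; combined with $t\cdot\ker(i^*)$ and the surjectivity of $i^*$, this yields a decomposition $\mathsf A^*(Y)[t]/(t\ker i^*,Q)\cong \mathsf A^*(Y)\oplus\bigoplus_{k=1}^{d-1}\mathsf A^*(Z)\,t^k$. On the target, the weighted analogue of the blowup decomposition gives $\mathsf A^*(\tilde Y)\cong f^*\mathsf A^*(Y)\oplus\bigoplus_{k=1}^{d-1}j_*\big(g^*\mathsf A^*(Z)\cdot t^{k-1}\big)$, the exceptional part being controlled by the weighted projective bundle $\tilde Z\to Z$. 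To prove that $\bar\Phi$ matches these summand by summand, I would construct explicit recovery maps: $f_*$ extracts the $\mathsf A^*(Y)$-component, while the higher components are recovered by applying $g_*$ to $j^*(\,\cdot\,)$ corrected by the weighted Segre classes $s^{\mathsf{wt}}(N)$ of \cite[Remark 8.1.2]{Arenathesis}. The delicate point, and the step I expect to require the most care, is verifying that this weighted Segre bookkeeping makes the recovery maps a genuine two-sided inverse on the exceptional summands, so that the rank count of matching free modules forces $\ker\Phi=(t\ker i^*,Q)$ exactly. This is precisely where the weighted self-intersection formula and the normalization $s^{\mathsf{wt}}(N)=1/P_N(1)$ enter, and it is the weighted replacement for the Segre-class computation in Keel's original argument.
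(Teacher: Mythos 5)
The paper contains no proof of this statement: it is quoted verbatim from Arena--Obinna \cite[Corollary 6.5]{AO}, with \cite[Theorem 5.5]{AO} and the weighted projective bundle formula supplied as black boxes. So the only meaningful comparison is with the strategy of that source, which is the weighted adaptation of Keel's argument \cite[Appendix, Theorem 1]{Keel} --- exactly the route you take. The parts you carry out are correct: $t\cdot\ker(i^*)\subset\ker\Phi$ follows from the projection formula and $j^*f^*=g^*i^*$; your identity $[\tilde Z]^k=(-1)^{k-1}j_*(t^{k-1})$ rests on the self-intersection formula $j^*j_*(x)=-t\cdot x$, which is indeed part of the weighted package (the restriction of $\O_{\tilde Y}(\tilde Z)$ to $\tilde Z$ is $\O_{\tilde Z}(-1)$); the computation $\Phi(P_N(t)-P_N(0))=-j_*(\delta)=-f^*[Z]$ via $f^*i_*=j_*(g^*(\cdot)\cdot\delta)$ is right; and your observation that surjectivity of $i^*$ is what makes $Q(t)$ well defined modulo $t\cdot\ker(i^*)$ is precisely the correct point. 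The excision argument for surjectivity of $\bar\Phi$ is also standard and sound.

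The one place you make the problem harder than it is is injectivity. You do not need recovery maps corrected by weighted Segre classes, nor an independently established direct-sum decomposition of either side. Since the leading coefficient of $Q(t)$ is the product of the weights, a unit in $\qq$, any kernel element may be reduced modulo $Q$ to $x=\alpha_0+\sum_{k=1}^{d-1}\alpha_k t^k$. Now observe that $j^*\Phi(t)=j^*(-[\tilde Z])=t$ and $j^*\Phi(\alpha t^k)=g^*i^*\alpha\cdot t^k$, so $j^*\circ\Phi$ is the tautological evaluation $\mathsf A^*(Y)[t]\to \mathsf A^*(Z)[t]/(P_N(t))$. Applying $j^*$ to $\Phi(x)=0$ gives
\[
\sum_{k=0}^{d-1} g^*i^*\alpha_k\cdot t^k=0 \quad\text{in } \mathsf A^*(\tilde Z)=\mathsf A^*(Z)[t]/(P_N(t))\,,
\]
and since $P_N$ has unit leading coefficient, $\mathsf A^*(\tilde Z)$ is free over $\mathsf A^*(Z)$ with basis $1,t,\dots,t^{d-1}$, forcing $i^*\alpha_k=0$ for all $k$. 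Hence $x\equiv\alpha_0$ modulo $t\cdot\ker(i^*)$, so $f^*\alpha_0=0$, and $f_*f^*=\mathrm{id}$ gives $\alpha_0=0$. This closes, in a few lines, exactly the step you flagged as delicate; the normalization $s^{\mathsf{wt}}(E)=1/P_E(1)$ never needs to enter (it is only required elsewhere, for the strict transform formula of Theorem \ref{strict}).
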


Let $\zeta: S\hookrightarrow Y$ be a nonsingular closed substack of dimension $k$ such that $S\cap Z$ is also nonsingular. Let $\tilde{S}\subset \tilde{Y}$ be the strict transform inside the weighted blowup. We will use a formula for $[\tilde{S}]\in \mathsf{A}_k(\tilde{Y})$ in terms of the total transform $f^*[S]$ and a class pushed forward from the exceptional divisor (generalizing \cite[Theorem 6.7]{Fulton} in the unweighted case). The exceptional term involves the weighted top Chern class $P_N$ of the weighted normal bundle and the weighted Segre class of the normal bundle $N_{S/S\cap Z}$. The following result is \cite [Theorem 8.2.1]{Arenathesis}.

\begin{thm}\label{strict}
    With notation as above, we have
    \[
    [\tilde{S}]=f^*[S]-j_*\{P_N(1)(1+t+t^2+\dots)\cdot g^*\zeta_*s^{\mathsf{wt}}(N_{S/S\cap Z})\}_k\,,
    \]
    where $-t$ is the class of the exceptional divisor. 
\end{thm}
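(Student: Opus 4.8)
The plan is to compute the correction class $f^*[S] - [\tilde S]$ and match it term-by-term with the claimed expression, generalizing Fulton's unweighted strict-transform formula \cite[Theorem 6.7]{Fulton} to the weighted blowups of \cite{Arenathesis, AO}.

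I would begin with the geometry. Since $S$, $Z$, and $W := S \cap Z$ are all nonsingular, the strict transform $\tilde S$ is itself the induced weighted blowup of $S$ along $W$, with exceptional divisor the weighted projective subbundle $\tilde W = \tilde S \cap \tilde Z$ lying over $W \subset Z$. Because $f$ is an isomorphism over $Y \smallsetminus Z$, the difference $f^*[S] - [\tilde S]$ is supported on the exceptional divisor, so $f^*[S] - [\tilde S] = j_* \gamma$ for some $\gamma \in \mathsf A^*(\tilde Z)$, and the task reduces to identifying $\gamma$ and extracting its dimension-$k$ part.

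To determine $\gamma$ I would restrict the identity to the exceptional divisor via $j^*$ and work inside $\mathsf A^*(\tilde Z) = \mathsf A^*(Z)[t]/(P_N(t))$. The Cartesian square $\tilde Z = Z \times_Y \tilde Y$ gives $j^* f^*[S] = g^* i^![S]$, where $i^!$ is the refined Gysin map of the regular embedding $i: Z \hookrightarrow Y$; and Theorem \ref{blowupsequence} supplies the self-intersection rule $j^* j_* = (\,\cdot\,)(-t)$ coming from $[\tilde Z] = -t$. The refined intersection $i^![S]$ is computed by the weighted excess-intersection formula: since $W$ need not have expected dimension, $i^![S]$ is the weighted Euler class of the excess bundle $\big(i^* N\big)/N_{S/S\cap Z}$ capped with $[W]$, which is precisely the combination $P_N(1)\, s^{\mathsf{wt}}(N_{S/S\cap Z})$ pushed forward by $\zeta_*$ (using $s^{\mathsf{wt}}(E) = 1/P_E(1)$). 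Feeding this into the restricted identity turns it into a linear equation for $\gamma$ in $\mathsf A^*(Z)[t]/(P_N(t))$; solving it, the geometric series $1 + t + t^2 + \cdots$ emerges as the weighted Segre contribution of the tautological subbundle on the weighted projective bundle $g: \tilde Z \to Z$, i.e. as the factor produced by inverting the class of the exceptional divisor. Taking the dimension-$k$ part $\{\cdots\}_k$ then yields exactly the stated formula, since $\dim \tilde S = \dim S = k$.

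The step I expect to be the main obstacle is the equivariant weight bookkeeping in the excess computation. In the unweighted case one simply caps the total Chern class $c(N)\cdot s(N_{S/S\cap Z})$ against $[W]$ and inverts the tautological hyperplane class on $\p(N)$; in the weighted case the $\mathbb{G}_m$-actions on $N = N_{Z/Y}$ and on $N_{S/S\cap Z}$ must be carried through simultaneously, and one must verify that the weighted top Chern polynomial $P_N$ and the weighted Segre class $s^{\mathsf{wt}}$ recombine into exactly $P_N(1)(1 + t + t^2 + \cdots)$. Establishing the weighted excess-intersection formula and the identification of $\tilde W$ as a weighted projective subbundle of $\tilde Z$, both in the language of the weighted intersection theory of \cite{Arenathesis}, is where the real work lies.
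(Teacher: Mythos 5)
The paper offers no internal proof of this statement to compare against: Theorem \ref{strict} is quoted verbatim from Arena's thesis \cite[Theorem 8.2.1]{Arenathesis}, so your proposal can only be judged on its own merits. Your setup is correct ($f^*[S]-[\tilde{S}]$ is supported on the exceptional divisor, $\tilde{S}\to S$ is the induced weighted blowup along $W=S\cap Z$, and the target formula is the weighted analogue of \cite[Theorem 6.7]{Fulton}), but the mechanism you propose for identifying the correction class has a genuine gap. Writing $f^*[S]-[\tilde{S}]=j_*\gamma$ and applying $j^*$ gives $-t\gamma=j^*f^*[S]-j^*[\tilde{S}]$, which you propose to ``solve'' for $\gamma$ in $\mathsf{A}^*(\tilde{Z})=\mathsf{A}^*(Z)[t]/(P_N(t))$. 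Multiplication by $t$ on this ring is not injective: writing $P_N(t)=\sum_i p_i t^i$, the kernel contains $f^!(a)=\delta\cdot g^*a$ for every $a$ with $p_0\, a=0$, where $\delta=(P_N(t)-P_N(0))/t$, since $t\cdot f^!(a)=-P_N(0)\,g^*a$; in the application in this very paper one has $P_N(0)=0$ (see \eqref{no}), so the kernel is an entire free $\mathsf{A}^*(Z)$-module. Worse, the ambiguity is not killed by $j_*$: by the commutative diagram preceding Theorem \ref{blowupsequence}, $j_*f^!(a)=f^*i_*a$, which is nonzero in general --- and is exactly of the shape of the correction term you are trying to compute. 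So the restricted identity determines $j_*\gamma$ only up to $f^*$ of classes supported on $Z$, and your derivation cannot conclude.

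The gap is repairable, but it requires more than you wrote: either add a second constraint from pushing forward (using $f_*f^*=\mathrm{id}$, $f_*[\tilde{S}]=[S]$, and the weighted projective bundle pushforward $g_*t^{d-1}=(w_1\cdots w_d)^{-1}$, one checks that the pair $(j^*,f_*)$ separates the competing solutions, since $g_*(\delta\cdot g^*a)=a$), or abandon the restriction trick and follow the Fulton-style route, computing $f^*[S]$ via the deformation to the normal cone and birational invariance of weighted Segre classes, with $[\tilde{W}]\in\mathsf{A}^*(\tilde{Z})$ evaluated as the weighted top Chern class of the quotient of $N$ by $N_{S/S\cap Z}$ --- a computation you also need for $j^*[\tilde{S}]$ but do not address. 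Relatedly, your identification of $i^![S]$ with ``$P_N(1)\,s^{\mathsf{wt}}(N_{S/S\cap Z})$ capped with $[W]$'' is imprecise: the refined Gysin class is the ordinary top Chern class of the excess bundle $(i^*N)/N_{S/S\cap Z}$, a single homogeneous piece, whereas $P_N(1)\,s^{\mathsf{wt}}(N_{S/S\cap Z})$ is the full inhomogeneous weighted series; its lower-order terms are precisely the data invisible to the equation $-t\gamma=j^*f^*[S]-j^*[\tilde{S}]$, i.e., the data lost to the kernel ambiguity above. Since you moreover defer the weighted excess formula and the weight-compatibility of $N_{S/S\cap Z}\subset i^*N$ to ``the real work,'' the proposal as written is a plausible plan whose central step fails and whose technical core remains unproved.
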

\noindent Here, the subscript $k$ denotes taking the dimension $k$ piece of the term in brackets.

\subsection{The weighted blowup of the locus of triple conics} We apply these results to our situation. Set $Y=X\smallsetminus (\mathsf{ML}\cup \mathsf{QP})$. We have the diagram
\[
\begin{tikzcd}
\mathsf A_G^{*}(\tilde{\mathsf{TC}}) \arrow[r, "j_*"]          & \mathsf A_G^{*}(\tilde{Y})          \\
\mathsf A_G^{*}(\mathsf{TC}) \arrow[r, "i_*"] \arrow[u, "f^!"] & \mathsf A_G^{*}(Y) \arrow[u, "f^*"]
\end{tikzcd}
\]

\begin{cor}\label{Ytilde}
With notation as above, we have
        \[
\mathsf{A}_G^*(\tilde{Y})\cong \mathsf{A}_G^*(Y)[t]/(t\cdot H,t\cdot c_3,Q(t))\,,
    \]
    where
    $Q(t)=P_N(t)-P_N(0)+[\mathsf{TC}]$. Furthermore, $[\tilde{\mathsf{TC}}]=-t$.
\end{cor}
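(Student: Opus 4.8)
The statement is a direct application of the Arena--Obinna structure theorem (Theorem \ref{blowupsequence}) for weighted blowups, applied to the closed embedding $i:\mathsf{TC}\hookrightarrow Y$ with the $(2,3)$-weighting on the normal bundle $N$ fixed in Section \ref{TCdiscussion}. To invoke it I need to verify its two hypotheses: that $\mathsf{TC}$ and $Y$ are nonsingular, and that the restriction $i^*:\mathsf A_G^*(Y)\to \mathsf A_G^*(\mathsf{TC})$ is surjective. Nonsingularity is immediate: $Y=X\smallsetminus(\mathsf{ML}\cup\mathsf{QP})$ is open in the smooth variety $X=\p^{27}$, while $\mathsf{TC}\cong \p\Sym^2 V^*\smallsetminus\Delta_2$ is open in $\p^5$, and the embedding is the $G$-equivariant closed immersion recorded in Section \ref{TCdiscussion}. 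The genuine content is the surjectivity of $i^*$ together with the identification of $\ker(i^*)$ with the ideal $(H,c_3)$; once this is established, Theorem \ref{blowupsequence} gives $t\cdot\ker(i^*)=(t\cdot H,\,t\cdot c_3)$ and $Q(t)=P_N(t)-P_N(0)+[\mathsf{TC}]$ with $[\tilde{\mathsf{TC}}]=-t$, which is precisely the claim.

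To analyze $i^*$ I would first identify the target. As established in Section \ref{TCdiscussion} (and already used for $\mathsf{Ell}$ in the proof of Theorem \ref{main}(i)), $G$ acts on nonsingular conics with stabilizer $\PSL_2$ via the identification $V=\Sym^2 W$, so that $[\mathsf{TC}/G]\cong \mathrm{B}\PSL_2$ and hence $\mathsf A_G^*(\mathsf{TC})\cong \mathsf A^*(\mathrm{B}\PSL_2)=\qq[u]$, a polynomial ring on a single generator $u$ in degree $2$. In particular all odd-degree parts of $\mathsf A_G^*(\mathsf{TC})$ vanish. Since $\mathsf A_G^*(Y)$ is generated over $\qq$ by $H$, $c_2$, $c_3$, it suffices to compute the three restrictions. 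Degree parity forces $i^*(H)=0$ because $\mathsf A^1(\mathrm{B}\PSL_2)=0$, and $i^*(c_3)=0$ because $\mathsf A^3(\mathrm{B}\PSL_2)=0$. For $c_2$, the morphism $\PSL_2\to G$, $g\mapsto \Sym^2 g$, identifies $i^*(c_2)$ with the second Chern class of the $\PSL_2$-representation $\Sym^2 W$; using that $\Sym^2 W$ has Chern roots $0,\pm 2a$ (where $\pm a$ are the Chern roots of $W$) one finds $c_2(\Sym^2 W)=-4a^2$, a nonzero scalar multiple $\lambda_0\,u$ of the generator, with $\lambda_0\neq 0$.

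These computations give surjectivity at once, since the image of $i^*$ contains the generator $u$. For the kernel I would argue that $(H,c_3)\subseteq\ker(i^*)$ by the vanishing above, and then consider the composite
\[
\qq[c_2]\twoheadrightarrow \mathsf A_G^*(Y)/(H,c_3)\xrightarrow{\,\overline{i^*}\,}\qq[u],\qquad c_2\mapsto \lambda_0\,u .
\]
The composite is an isomorphism of graded rings because $\lambda_0\neq 0$; this forces the surjection $\qq[c_2]\to \mathsf A_G^*(Y)/(H,c_3)$ to be injective as well, so $\mathsf A_G^*(Y)/(H,c_3)\cong\qq[c_2]$ and $\ker(i^*)=(H,c_3)$ exactly.

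I expect the main obstacle to be this kernel identification, specifically ruling out any relation in $\mathsf A_G^*(Y)/(H,c_3)$ beyond those of $\qq[c_2]$, given the complicated relations in $\mathsf A_G^*(Y)$ coming from Lemmas \ref{nonreduced} and \ref{quadrelations}. The graded-ring isomorphism argument above sidesteps an explicit analysis of those relations, but it relies essentially on two structural inputs: that $\mathsf A^*(\mathrm{B}\PSL_2)$ is a polynomial ring on one even-degree generator, and on the nonvanishing of $i^*(c_2)$. With $\ker(i^*)=(H,c_3)$ in hand, the remaining assertions are a formal substitution into Theorem \ref{blowupsequence}, the weighted top Chern class $P_N$ and the class $[\mathsf{TC}]$ entering only through $Q(t)$.
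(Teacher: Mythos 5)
Your proposal is correct and takes essentially the same route as the paper's proof of Corollary \ref{Ytilde}: both verify the hypotheses of Theorem \ref{blowupsequence} by computing $i^*$ on the generators, using $\mathsf{A}^*_G(\mathsf{TC})\cong \mathsf{A}^*_{\PSL_2}(\mathrm{pt})=\qq[c_2]$ and $i^*c_2=4c_2$ (via $g\mapsto \Sym^2 g$) to obtain surjectivity and $\ker(i^*)=(H,c_3)$. The only cosmetic differences are that the paper deduces $i^*H=0$ geometrically from $\mathsf{TC}=\p^5\smallsetminus\Delta_2$ rather than from the vanishing of odd-degree classes on $\mathrm{B}\PSL_2$, and it leaves implicit the graded-ring argument you spell out to rule out extra kernel elements.
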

\begin{proof}
We note that $$i^*:\mathsf A^*_G(Y)\to \mathsf A^*_G(\mathsf{TC}), \quad i^*H=0, \quad i^*c_3=0\,.$$ Indeed, for the vanishing of $i^*H$, we recall that $\mathsf{TC}=\mathbb P^5\smallsetminus \Delta_2,$ where $\Delta_2$ is the divisor of singular conics. 
The vanishing of $i^*c_3$ is a consequence of the fact that the stabilizer of the orbit $\mathsf{TC}$ is $\PSL_2.$ In fact, $$\mathsf {A}_G^{*}(\mathsf{TC})=\mathsf{A}^*_G(G/\mathsf{PSL}_2)=\mathsf{A}^*_{\mathsf{PSL}_2}(\text{pt})=\mathbb Q[c_2].$$ Furthermore, using the map $\mathsf{PSL}_2\to G$, $g\to \text{Sym}^2g$, we see that $i^*c_2=4c_2.$ These remarks show 
 $$\ker i^*=\langle H, c_3\rangle\,$$ 
and that $i^*$ is surjective. Applying Theorem \ref{blowupsequence} yields the result.
\end{proof}

\begin{rem}\label{twomore} The Chow ring $\mathsf{A}^*_G(Y)$ is 
has a presentation with 
generators $H,c_2,c_3$ 
and 6 relations (obtained from Remarks \ref{nonreducedremark} and \ref{quadremark}). Applying Corollary \ref{Ytilde} then yields a complete presentation of $\mathsf{A}^*_G(\tilde{Y})$ by imposing $3$ additional relations. 
After restriction to $\mathcal{F}_2$, 
$$t=-[\mathsf{Ell}]\,.$$
The polynomial $P_N$, required for the third relation of Corollary \ref{Ytilde}, will be recorded in equation \eqref{no} below. However, for degree reasons, when restricting to $\mathcal F_2$, only the first two relations $$\left[\mathsf {Ell}\right]\cdot H=0\, , \quad \left[\mathsf{Ell}\right]\cdot c_3=0$$ will be meaningful. \end{rem}

\subsection{A resolution of $\mathsf{CTP}$}

By the discussion in Section \ref{Shah}, we have morphisms $$\mathcal F_2\stackrel{\zeta}{\longrightarrow}[\tilde Y/G]\stackrel{f}{\longrightarrow} [Y/G]\,,$$ where the complement of the image of $\zeta$ is the union of $\widetilde{\mathsf {DC}}$ and $\widetilde {\mathsf{CTP}}.$ As noted in the previous paragraph, we have a complete description of $\mathsf{A}^*_G(\widetilde {Y})$. 
To understand $\mathsf{A}^*(\F_2)$, we first compute relations by removing $\tilde{\mathsf{CTP}}$.

An additional complication arises here. Theorem \ref{strict} does not directly apply to the blowup $\tilde{Y}$ because $\mathsf{CTP}$ is singular. In the following subsection, we will use Theorem \ref{strict} for a resolution of $\mathsf{CTP}$, which we now construct. 

Recall that a sextic $f$ has a consecutive triple point if analytically locally it lies in the ideal $(x,y^2)^3$. We begin by constructing the relevant bundle of principal parts. Note that the local equation is in the ideal $(x,y^2)^3$ if and only if the coefficients in the Taylor expansion of the monomials in the set  $$S=\{1,x,y,x^2,xy,y^2,x^2y,xy^2,y^3,xy^3,y^4,y^5\}$$ all vanish. To record the data of these monomial coefficients, we use the machinery of refined principal parts bundles as in \cite[Section 3.2]{CanningLarson}. The universal $\p^2$ bundle is denoted by $\pi:[\p V/G]\rightarrow \mathrm{B}G$. Set $T$ to be the tangent bundle of $\p V$. The set $S$ is admissible in the sense of \cite[Definition 3.7]{CanningLarson}. There is a rank $12$ bundle on the domain of $a:[\p T/G]\rightarrow [\p V/G]$ denoted $P^S(\O(6))$ and an evaluation morphism
\begin{equation}\label{refined}
a^{*}\pi^*\Sym^6 V^*=a^*\pi^*\pi_*\O(6)\rightarrow  a^* P^5(\O(6))\rightarrow P^S(\O(6))\,,
\end{equation}
where the first map is the usual fifth order principal parts evaluation pulled back to $\p T$, and the second map truncates the Taylor series along the monomials in $S$. The composite is surjective, as the first map is surjective because $\O(6)$ is $5$-very ample, and the second map is surjective by definition. 
Let $K_{\mathrm{ctp}}$ denote the kernel of the morphism \eqref{refined}:
\begin{equation}\label{k}0\to K_{\text{ctp}}\to a^{*}\pi^*\Sym^6 V^*\rightarrow P^S(\O(6))\to 0\,.\end{equation}
It is a $G$-equivariant vector bundle of rank $16$ on $\pp T\times X$, parametrizing pairs $(f,p,v)$ where $f$ is a sextic with a consecutive triple point at $p$ in the tangent direction $v\in T_{\,p\,} \mathbb PV.$ 

Denote by $\pp K^{\circ}$ the restriction of $\pp K_{\ctp}$ to $\pp T\times Y$, where $Y=X\smallsetminus (\mathsf{QP}\cup \mathsf{ML})$. The following diagram summarizes the situation:
\begin{equation}\label{summaryctp}
\begin{tikzcd}
{[\p K^\circ/G]} \arrow[r, "\iota"] \arrow[rd, "\rho_3"']     & {\left[\left(\p T\times Y\right)/G\right]} \arrow[r, "a'"] \arrow[d, "\rho_2"]       & {\left[\left(\p V \times Y\right)/G\right]} \arrow[d, "\rho_1"] \arrow[r, "\pi'"]       & {[Y/G]} \arrow[d, "\rho"]      \\
                                                              & {[\p T/G]} \arrow[r, "a"]                                          & {[\p V/G]} \arrow[r, "\pi"]                                       & \mathrm{B}G\,.                           
\end{tikzcd}
\end{equation}
The image of $\pi'\circ a'\circ\iota$ is the locus $\mathsf{CTP}\cap Y$, which by abuse of notation we will simply refer to as $\mathsf{CTP}$. 

We want to impose relations from removing $\tilde{\mathsf{CTP}}$, not just $\mathsf{CTP}$. We perform the weighted blowup of $Y$ along $\mathsf{TC}$ to obtain $\tilde{Y}$. Because $\pi$ is flat, the weighted blowup of $\pp V\times Y$ along $\pi'^{-1}(\mathsf{TC})$ is simply $\pp V\times \tilde{Y}$. Similarly, the weighted blowup of $\pp T \times Y$ along $a'^{-1}\pi'^{-1}(\mathsf{TC})$ is $\pp T\times \tilde{Y}$. Let $\tilde{\pp K^{\circ}}$ be the strict transform of $\pp K^{\circ}$. We have the following commutative diagram, in which each cell in the right most $3\times 3$ square in the blowup diagram is Cartesian:
\begin{equation}\label{bigdiagram}
\begin{tikzcd}
{[\tilde{\pp K^{\circ}}/G]} \arrow[d, "h"] \arrow[r, "\iota_{\mathsf{str}}"] & {[\pp T \times \tilde{Y}/G]} \arrow[d, "f_2"] \arrow[r, "a''"] & {[\pp V\times \tilde{Y}/G]} \arrow[d, "f_1"] \arrow[r, "\pi''"] & {[\tilde{Y}/G]} \arrow[d, "f"] \\
{[\pp K^{\circ}/G]} \arrow[r, "\iota"] \arrow[rd, "\rho_3"']                 & {[\pp T \times Y /G]} \arrow[r, "a'"] \arrow[d, "\rho_2"]      & {[\pp V \times Y/G]} \arrow[r, "\pi'"] \arrow[d, "\rho_1"]     & {[Y/G]} \arrow[d, "\rho"]      \\
                                                                      & {[\pp T/G]} \arrow[r, "a"]                                     & {[\pp V/G]} \arrow[r, "\pi"]                                   & \mathrm{B}G \,.                  
\end{tikzcd}
\end{equation}
The next lemma shows we can compute relations from removing $\tilde{\mathsf{CTP}}$ by finding the image of the proper pushforward $\mathsf{A}^G_*(\tilde{\pp K^{\circ}})\rightarrow \mathsf{A}^G_*(\tilde Y)$ along the morphism $\pi''\circ a''\circ\iota_{\mathrm{str}}$.

\begin{lem}\label{lp44}
    The map $\pi''\circ a''\circ\iota_{\mathrm{str}}:[\tilde{\pp K^{\circ}}/G]\rightarrow [\tilde{Y}/G]$ is proper and its image is $\tilde{\mathsf{CTP}}$. Therefore, the induced proper pushforward map
    \[
    \mathsf{A}^G_*(\tilde{\pp K^{\circ}})\rightarrow \mathsf{A}^G_*(\tilde{\mathsf{CTP}})
    \]
    is surjective.
\end{lem}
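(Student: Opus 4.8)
The plan is to realize $\Phi := \pi''\circ a''\circ\iota_{\mathrm{str}}$ as a composition of proper morphisms and then to pin down its image by restricting to the open locus where the weighted blowup $f$ is an isomorphism, exploiting irreducibility of the strict transform.

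\emph{Properness.} By construction the strict transform $\tilde{\pp K^\circ}$ is a closed substack of $[\pp T\times\tilde Y/G]$, so $\iota_{\mathrm{str}}$ is a closed immersion and in particular proper. Because each cell of the right-hand $3\times 3$ square of \eqref{bigdiagram} is Cartesian and $\pi,a$ are flat, the weighted blowups of $[\pp V\times Y/G]$ and $[\pp T\times Y/G]$ are simply $[\pp V\times\tilde Y/G]$ and $[\pp T\times\tilde Y/G]$; consequently $a''$ and $\pi''$ remain projectivized-bundle projections, which are proper. Thus $\Phi$, as a composition of proper maps, is proper, and in particular its image is closed.

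\emph{Image.} Let $\phi=\pi'\circ a'\circ\iota\colon[\pp K^\circ/G]\to[Y/G]$ be the unblown-up map, whose image is $\mathsf{CTP}$ by \eqref{summaryctp}. Since $[\pp K^\circ/G]$ is a projectivized vector bundle over the irreducible stack $[(\pp T\times Y)/G]$, it is irreducible, so $\mathsf{CTP}=\phi([\pp K^\circ/G])$ is irreducible; moreover $\mathsf{TC}\subset\mathsf{CTP}$ (see \eqref{localcal}) has strictly larger codimension in $X$ ($22$ versus $9$), so $\mathsf{CTP}\smallsetminus\mathsf{TC}$ is dense in $\mathsf{CTP}$. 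Write $U\subset\tilde{\pp K^\circ}$ for the part lying over $Y\smallsetminus\mathsf{TC}$, where the blowup map $f_2$ is an isomorphism; then $\tilde{\pp K^\circ}=\overline U$, and over $Y\smallsetminus\mathsf{TC}$ the map $\Phi|_U$ is identified with $\phi$, giving $\Phi(U)=f^{-1}(\mathsf{CTP}\smallsetminus\mathsf{TC})$. By continuity, $\Phi(\tilde{\pp K^\circ})=\Phi(\overline U)\subseteq\overline{\Phi(U)}=\tilde{\mathsf{CTP}}$, the last equality being the definition of the strict transform. Conversely, as $\Phi$ is proper its image is closed and contains $\Phi(U)$, which is dense in $\tilde{\mathsf{CTP}}$; hence $\Phi(\tilde{\pp K^\circ})\supseteq\tilde{\mathsf{CTP}}$, and so $\Phi(\tilde{\pp K^\circ})=\tilde{\mathsf{CTP}}$. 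The one subtlety I expect to be the crux is precisely this image computation: a priori $\Phi(\tilde{\pp K^\circ})$ might acquire spurious components supported over the exceptional divisor, but taking the \emph{strict} transform keeps $\tilde{\pp K^\circ}$ irreducible, and together with the density of $\mathsf{CTP}\smallsetminus\mathsf{TC}$ this forces the image to be exactly $\tilde{\mathsf{CTP}}$.

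\emph{Surjectivity.} A proper surjection induces a surjection on Chow groups with $\qq$-coefficients, by the same argument given before Lemma \ref{quadrelations}: one applies \cite[Lemma 3.8]{V} to the induced map of quotients $(\tilde{\pp K^\circ}\times U')/G\to(\tilde{\mathsf{CTP}}\times U')/G$, where $U'$ is an open subset of a representation of $G$ on which $G$ acts freely with complement of sufficiently high codimension, as in \cite[Definition-Proposition 1]{EG}. This yields the surjectivity of $\mathsf A^G_*(\tilde{\pp K^\circ})\to\mathsf A^G_*(\tilde{\mathsf{CTP}})$, completing the proof.
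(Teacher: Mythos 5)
Your proof is correct and takes essentially the same approach as the paper: properness comes from the same three factors ($\iota_{\mathrm{str}}$ a closed embedding, $a''$ and $\pi''$ projective bundles), the image is identified with $\tilde{\mathsf{CTP}}$ by dominance over the locus where the blowup is an isomorphism, and the Chow surjectivity invokes the same Vistoli/Edidin--Graham approximation argument used before Lemma \ref{quadrelations}. The only difference is one step's mechanism: where the paper gets the factorization of $\pi''\circ a''\circ\iota_{\mathrm{str}}$ through $\tilde{\mathsf{CTP}}$ in one line from the universal property of blowing up, you derive the containment of the image by the equivalent topological argument $\Phi(\overline{U})\subseteq\overline{\Phi(U)}=\tilde{\mathsf{CTP}}$, using that the strict transform is by definition the closure of the locus over $Y\smallsetminus\mathsf{TC}$.
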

\begin{proof}
    By the universal property of blowing up, $\pi''\circ a''\circ\iota_{\mathrm{str}}$ factors through $\tilde{\mathsf{CTP}}$. It is proper because $\iota_{\mathrm{str}}$ is a closed embedding and $a''$ and $\pi''$ are projective bundles. Moreover, it is dominant because $[\pp K^{\circ}/G]\rightarrow [Y/G]$ surjects onto $\mathsf{CTP}$. The first statement follows. For the surjectivity of the pushforward $(\pi''\circ a''\circ\iota_{\mathrm{str}})_*$, we apply the same argument given before Lemma \ref{quadrelations}.
\end{proof}
\subsection{Chow classes on $\mathsf{CTP}$ and its resolution} In order to apply Lemma \ref{lp44}, we study the Chow rings of the the spaces in diagram \eqref{bigdiagram}. We let $$z=c_1(\O_{\pi}(1)), \quad \tau=c_1(\O_{a}(1)).$$ Because $a'$ and $a''$ (respectively, $\pi'$ and $\pi''$) are pullbacks of $a$ (respectively, $\pi$), we will omit pullbacks and use $\tau$ (respectively, $z$) also for the hyperplane classes of $a'$ and $a''$ (respectively, $\pi'$ and $\pi''$). By the projective bundle formula and Corollary \ref{Ytilde}, the Chow ring of every space in the rightmost $3\times 3$ square in diagram \eqref{bigdiagram} is understood. 

Moreover, because $\pp K^{\circ}$ is open inside a projective bundle over $\pp T$, the Chow ring $\mathsf{A}^*_G(\pp K^{\circ})$ is generated over $\mathsf{A}^*_G$ by the pullbacks of $z,\tau$ and the hyperplane class of $\rho_3$. Because $\pp K_{\ctp}$ is linearly embedded in $\pp (a^*\pi^*\Sym^6 V^*)=\pp T \times X$, the hyperplane class of $\rho_3$ is the restriction of the hyperplane class of $\rho_2$, which in turn is the pullback of $H$ along $a'\circ \pi'$. In particular, every class in $\mathsf{A}^*_G(\pp K^{\circ})$ is pulled back from $\mathsf{A}^*_G(\pp T\times Y)$.

To determine generators for $\mathsf{A}^*_G(\tilde{\pp K^{\circ}})$, we note that $\tilde{\pp K^{\circ}}$ is the weighted blowup of $\pp K^{\circ}$ along the incidence correspondence $J=\mathsf{TC}\times_{\mathsf{CTP}} \pp K^{\circ}$. Thus, $\mathsf{A}^*_G(\tilde{\pp K^{\circ}})$ can be computed via Theorem \ref{blowupsequence}. The answer is recorded in Corollary \ref{modulegenerators} below. 

We first note that $J$ parametrizes tuples $(g,p,v)$ such that $g$ is a nonsingular conic with a point $p$ and a tangent vector $v$ to $g$ at $p$. We can describe $J$ using a bundle of principal parts, similarly to the way we formed $K_{\ctp}$. Let $S'=\{1,y\}$ and form the bundle of principal parts $P^{S'}(\O(2))$ in the sense of \cite[Definition 3.7]{CanningLarson}. This is a rank $2$ bundle on $\pp T$ equipped with a $G$-equivariant evaluation morphism
\[
a^*\pi^*\Sym^2 V^*\rightarrow a^*P^1(\O(2))\rightarrow P^{S'}(\O(2))\,,
\]
which is surjective because $\O(2)$ is $2$-very ample. We denote the kernel by $U$. The space $\pp U$ parametrizes triples $(g,p,v)$ such that $g$ is a conic with a point $p$ and tangent vector $v$ to $g$ at $p$. Thus, $J\subset \pp U$ is the open subspace corresponding to when the conic is nonsingular. Note that $J$ itself is nonsingular because it is open inside of a projective bundle $\pp U$ over $\pp T$.

\begin{lem}\label{pullbacksurjection}
    The Chow ring $\mathsf{A}_G^*(J)$ is generated by the restrictions of $c_2$, $z$ and $\tau$ to $J$. In particular, the pullback map
    \[
    \mathsf{A}_G(\pp K^{\circ})\rightarrow \mathsf{A}_G^*(J)
    \]
    is surjective.
\end{lem}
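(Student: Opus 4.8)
The plan is to play off the two fibration structures carried by $J$: its realization as an open subvariety of the projective bundle $[\pp U/G]\to[\pp T/G]$, which supplies an explicit generating set, against the projection $q\colon J\to\mathsf{TC}$ remembering only the conic, which lets us discard the superfluous generators. First I would record generators for $\mathsf{A}_G^*(\pp U)$. The tower $[\pp U/G]\to[\pp T/G]\to[\pp V/G]\to\mathrm{B}G$ is a composition of projective bundles (for $U$, for the tangent bundle $T$ of $\pp V$, and for $V$), so by the projective bundle formula $\mathsf{A}_G^*(\pp U)$ is generated over $\mathsf{A}_G^*=\mathbb{Q}[c_2,c_3]$ by the hyperplane classes $z$, $\tau$, and $\xi:=c_1(\O_{\pp U}(1))$. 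Since $J\hookrightarrow\pp U$ is open, restriction gives a surjection $\mathsf{A}_G^*(\pp U)\twoheadrightarrow\mathsf{A}_G^*(J)$, so $\mathsf{A}_G^*(J)$ is generated by the images of $c_2,c_3,z,\tau,\xi$. It therefore suffices to show that $c_3$ and $\xi$ restrict redundantly.

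For $c_3$, I would note that the structure map $J\to\mathrm{B}G$ factors through $q\colon J\to\mathsf{TC}$. By the computation in the proof of Corollary \ref{Ytilde}, the stabilizer of a triple conic is $\mathsf{PSL}_2$, so $\mathsf{A}_G^*(\mathsf{TC})=\mathsf{A}^*_{\mathsf{PSL}_2}(\mathrm{pt})=\mathbb{Q}[c_2]$ with $c_3|_{\mathsf{TC}}=0$ and $H|_{\mathsf{TC}}=0$. Pulling back along $q$ gives $c_3|_J=q^*(c_3|_{\mathsf{TC}})=0$, and likewise $H|_J=0$.

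For $\xi$, the key observation is that the removed locus $D=\pp U\smallsetminus J$ is precisely the relative discriminant: in each fiber $\pp U_{(p,v)}\cong\pp^3$ of conics through $p$ tangent to $v$, the singular conics are cut out by the determinant and thus form a cubic hypersurface. Hence $[D]$ has relative degree $3$, and since the only degree-one classes pulled back from $\pp T$ are $z$ and $\tau$ (recall $c_1$ of the universal bundle vanishes), we may write $[D]=3\xi+\alpha z+\beta\tau$ for some $\alpha,\beta\in\mathbb{Q}$. Because $D$ is removed to form $J$, the excision sequence forces $[D]|_J=0$, so $\xi|_J=-\tfrac13(\alpha z|_J+\beta\tau|_J)\in\langle z|_J,\tau|_J\rangle$. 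Crucially one need not compute $\alpha,\beta$; only the nonvanishing of the coefficient of $\xi$, namely the positivity of the relative degree of the discriminant, is used. Together with $c_3|_J=0$, this shows $\mathsf{A}_G^*(J)$ is generated by $c_2,z,\tau$, the first assertion.

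The second assertion then follows quickly. As explained just before the lemma, $\mathsf{A}_G^*(\pp K^{\circ})$ is generated over $\mathbb{Q}$ by $c_2,c_3,z,\tau,H$; and since $J\to\pp K^{\circ}$ and $J\hookrightarrow\pp U$ induce the same map to $\pp T$, the classes $c_2,z,\tau$ pull back along $J\to\pp K^{\circ}$ to the corresponding classes on $J$. Their images already generate $\mathsf{A}_G^*(J)$, so the pullback is surjective. The main obstacle is the middle step: identifying $D$ as the relative discriminant and confirming that $[D]$ has the shape $3\xi+(\text{pullback from }\pp T)$, together with the bookkeeping that $z$ and $\tau$ denote the same classes in the $\pp U$- and $\pp K^{\circ}$-descriptions. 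Everything else is a formal consequence of the projective bundle formula, excision, and the restriction $c_3|_{\mathsf{TC}}=0$ established in Corollary \ref{Ytilde}.
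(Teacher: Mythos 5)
Your proposal is correct, and it reaches the paper's conclusion by a mildly different mechanism in the one step where the two arguments diverge. The opening and closing moves coincide with the paper's: generators $c_2,c_3,z,\tau,\xi$ for $\mathsf{A}^*_G(\pp U)$ from the projective bundle tower, surjectivity of restriction to the open substack $J$, vanishing of $c_3$ via the stabilizer computation $\mathsf{A}^*_G(\mathsf{TC})=\mathsf{A}^*_{\PSL_2}(\mathrm{pt})=\mathbb{Q}[c_2]$ from Corollary \ref{Ytilde} (you access it through $q\colon J\to\mathsf{TC}$; the paper through the linear embedding $\pp U\hookrightarrow \pp\Sym^2V^*\times\pp T$, under which $J$ lands in $(\pp\Sym^2V^*\smallsetminus\Delta_2)\times\pp T$), and the same formal deduction of the second assertion. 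For $\xi$, the paper's embedding gives the stronger conclusion for free: $\xi$ is the restriction of the hyperplane class $h$ of $\pp\Sym^2V^*$, and $h$ vanishes on the complement of $\Delta_2$ (equivariantly $[\Delta_2]^G=3h$ exactly, since $\mathsf{A}^1_G=0$), so $\xi|_J=0$ on the nose. Your excision argument inside $\pp U$ is a valid substitute and yields the weaker but sufficient statement $\xi|_J\in\langle z|_J,\tau|_J\rangle$; what it buys is independence from the global embedding, at the cost of a fiberwise degree computation — and there is a small inaccuracy in that computation worth flagging. The determinantal cubic in a fiber $\pp U_{(p,v)}\cong\pp^3$ is non-reduced: in coordinates where the conics in the fiber are $\gamma x^2+\delta xy+\epsilon y^2+\beta xz$, the determinant equals $-\tfrac{1}{4}\beta^2\epsilon$, a double hyperplane plus a hyperplane. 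So the reduced boundary divisor $D=\pp U\smallsetminus J$ has relative degree $2$, not $3$; the coefficient $3$ is correct only for the scheme-theoretic determinantal divisor (for which in fact $\alpha=\beta=0$, recovering $\xi|_J=0$). Since any effective class supported on $D$ dies on $J$ and, as you correctly note, only the nonvanishing of the $\xi$-coefficient is used, the argument is unaffected under either reading — but as literally stated your formula $[D]=3\xi+\alpha z+\beta\tau$ does not hold for the reduced class.
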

\begin{proof}
    By definition, $\pp U$ is a projective bundle over $\pp T$, and so $\mathsf A^*_G(\pp U)$ is generated by (the pullbacks of) $c_2,c_3,z,\tau$ and $\zeta=c_1(\O_{\pp U}(1))$. 
    Moreover, $\pp U\rightarrow \pp(a^*\pi^*\Sym^2 V^*)=\pp \Sym^2 V^*\times \pp T $ is a linear embedding, under which $J$ maps to the locus of nonsingular conics $(\pp \Sym^2 V^*\smallsetminus \Delta_2)\times \pp T$. The classes $c_2,c_3,z,\tau,$ and $\zeta$ are all pulled back from $\pp \Sym^2 V^*\times \pp T$, and on $(\pp \Sym^2 V^*\smallsetminus \Delta_2)\times \pp T$, the hyperplane class and $c_3$ vanish. Therefore, $\mathsf A^*_G(J)$ is generated by the restrictions of $c_2, z, \tau$. 
\end{proof}
\begin{cor}\label{modulegenerators}
    The Chow ring $\mathsf{A}^*_G(\tilde{\pp K^{\circ}})$ is generated as an $\mathsf{A}^*_G(\tilde{Y})$ module by the restrictions of $\tau$ and $z$. Hence, the image of the pushforward map
    \[
    \mathsf{A}_G^*(\tilde{\pp K^{\circ}})\rightarrow \mathsf{A}_G^*(\tilde{Y})
    \]
    is generated by the classes
    \[
    (\pi''\circ a'')_*([\tilde{\pp K^{\circ}}]^G\cdot \tau^j z^i)
    \]
    for $0\leq j \leq 1$ and $0\leq i \leq 2$.
\end{cor}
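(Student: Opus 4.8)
The plan is to prove Corollary \ref{modulegenerators} as a direct consequence of Lemma \ref{pullbacksurjection} together with the blowup structure theorem (Theorem \ref{blowupsequence}), and then to extract the generators of the image of the pushforward by the projection formula. First I would identify $\tilde{\pp K^{\circ}}$ as the weighted blowup of $\pp K^{\circ}$ along the incidence correspondence $J=\mathsf{TC}\times_{\mathsf{CTP}}\pp K^{\circ}$, as already noted in the discussion preceding the statement. Since $J$ is nonsingular (being open inside the projective bundle $\pp U$ over $\pp T$) and sits inside the nonsingular $\pp K^{\circ}$, Theorem \ref{blowupsequence} applies provided the restriction $\mathsf A^*_G(\pp K^{\circ})\to \mathsf A^*_G(J)$ is surjective. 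But this surjectivity is exactly the content of Lemma \ref{pullbacksurjection}. Therefore
\[
\mathsf A^*_G(\tilde{\pp K^{\circ}})\cong \mathsf A^*_G(\pp K^{\circ})[s]/(s\cdot\ker,\ Q(s))
\]
for the appropriate exceptional parameter $s$, and in particular $\mathsf A^*_G(\tilde{\pp K^{\circ}})$ is generated as an $\mathsf A^*_G(\pp K^{\circ})$-algebra by $s$ together with the generators of $\mathsf A^*_G(\pp K^{\circ})$.

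Next I would track how these generators descend to an $\mathsf A^*_G(\tilde Y)$-module description. Recall from the paragraph before Lemma \ref{pullbacksurjection} that every class in $\mathsf A^*_G(\pp K^{\circ})$ is pulled back from $\mathsf A^*_G(\pp T\times Y)$, which by the projective bundle formula is generated over $\mathsf A^*_G(Y)$ by monomials $\tau^j z^i$ with $0\le j\le 1$ and $0\le i\le 2$ (the ranges being dictated by the fibers of $a$, a $\pp^1$-bundle, and $\pi$, a $\pp^2$-bundle). Passing to the blowup, $\mathsf A^*_G(\pp T\times\tilde Y)$ is an $\mathsf A^*_G(\tilde Y)$-module generated by the same $\tau^j z^i$, and the exceptional parameter $s$ on $\tilde{\pp K^{\circ}}$ is identified with (the pullback of) the exceptional class $t$ on $\tilde Y$, since the blowup centers $J$ and $\mathsf{TC}$ are compatible under the Cartesian squares of diagram \eqref{bigdiagram}. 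Consequently $\mathsf A^*_G(\tilde{\pp K^{\circ}})$ is generated as an $\mathsf A^*_G(\tilde Y)$-module by the restrictions of $\tau$ and $z$.

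To conclude the statement about the image of the pushforward, I would invoke the projection formula along $\pi''\circ a''$. Since $\iota_{\mathrm{str}}:\tilde{\pp K^{\circ}}\hookrightarrow \pp T\times\tilde Y$ is a closed embedding, every class on $\tilde{\pp K^{\circ}}$ is the restriction of a class on $\pp T\times\tilde Y$, so by the projection formula any pushforward $(\pi''\circ a''\circ\iota_{\mathrm{str}})_*\alpha$ equals $(\pi''\circ a'')_*(\iota_{\mathrm{str}*}[1]\cdot \beta)=(\pi''\circ a'')_*([\tilde{\pp K^{\circ}}]^G\cdot\beta)$ for a lift $\beta$. Writing $\beta$ in terms of the module generators $\tau^j z^i$ with coefficients pulled back from $\mathsf A^*_G(\tilde Y)$, and using that $(\pi''\circ a'')_*$ is $\mathsf A^*_G(\tilde Y)$-linear, the image is generated exactly by the classes $(\pi''\circ a'')_*([\tilde{\pp K^{\circ}}]^G\cdot\tau^j z^i)$ for $0\le j\le 1$, $0\le i\le 2$.

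The main obstacle I anticipate is not any single deep step but rather the careful bookkeeping of which classes generate at each stage and the verification that the exceptional parameter $s$ upstairs is genuinely the pullback of $t$ downstairs; this hinges on the compatibility of the two weighted blowups, i.e. that $\tilde{\pp K^{\circ}}$ is the blowup along $J=\mathsf{TC}\times_{\mathsf{CTP}}\pp K^{\circ}$ and that the Cartesian structure of diagram \eqref{bigdiagram} identifies the exceptional divisors. The surjectivity hypothesis of Theorem \ref{blowupsequence}, which could otherwise be delicate, is already handled cleanly by Lemma \ref{pullbacksurjection}, so the corollary reduces to assembling these pieces.
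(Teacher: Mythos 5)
Your proposal is correct and follows essentially the same route as the paper: apply Theorem \ref{blowupsequence} to the weighted blowup of $\pp K^{\circ}$ along $J$ (with the surjectivity hypothesis supplied by Lemma \ref{pullbacksurjection}), identify the exceptional class upstairs with the restriction of the exceptional divisor of $\pp T\times\tilde{Y}$ so that every class on $\tilde{\pp K^{\circ}}$ is pulled back from $\pp T\times\tilde{Y}$, and conclude via the projective bundle formula and the projection formula. Your write-up merely makes explicit a couple of steps the paper leaves implicit (the verification of surjectivity and the identification of the exceptional parameters), but the argument is the same.
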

\begin{proof}
    By Theorem \ref{blowupsequence}, $\mathsf{A}^*_G(\tilde{\pp K^{\circ}})$ is generated as an algebra by pullbacks from $\mathsf{A}_G^*(\pp K^{\circ})$ and the fundamental class of the exceptional divisor. Because the exceptional divisor of $\tilde{\pp K^{\circ}}$ is the restriction of the exceptional divisor of $\pp T\times \tilde{Y}$, every class on $\tilde{\pp K^{\circ}}$ is pulled back from $\pp T\times \tilde{Y}$. The Chow ring $\mathsf{A}^*_G(\pp T\times \tilde{Y})$ is generated as an $\mathsf{A}^*_G(\tilde{Y})$ module by $\tau$ and $z$. The second statement follows from the first by the projection formula.
\end{proof}
By Lemma \ref{lp44}, to compute $\mathsf{A}^*_G(\tilde{Y}\smallsetminus \tilde{\mathsf{CTP}})$, we need to make explicit the classes in Corollary \ref{modulegenerators} in terms of the generators of $\mathsf{A}^*_G(\tilde{Y})$.


\subsection {Relations from consecutive triple points}\label{filcpt} To compute the pushforwards in Corollary \ref{modulegenerators} in terms of the generators for $\mathsf{A}^*_G(\tilde{Y})$, we need a more explicit formula for the class $[\tilde{\pp K^{\circ}}]^G$. We use Theorem \ref{strict}, the weighted blowup formula, applied to the weighted blowup $f_2:\pp T \times \tilde{Y}\rightarrow \pp T \times Y$ along $\pp T\times \mathsf{TC}$. We denote by $i_2:\pp T\times \mathsf{TC}\rightarrow \pp T \times Y$ the inclusion. We have the weighted blowup diagram
\[
\begin{tikzcd}
 \pp T \times \tilde{\mathsf{TC}} \arrow[r, "j_2"] \arrow[d,"g_2"]        & \pp T \times \tilde{Y} \arrow[d,"f_2"]          \\
 \pp T \times \mathsf{TC} \arrow[r, "i_2"] & \mathsf \pp T \times Y \,.
\end{tikzcd}
\]
Let $\xi: J\rightarrow \pp T \times \mathsf{TC}$ be the inclusion. By Theorem \ref{strict}, 
\begin{equation}\label{strictformula}
[\tilde{\pp K^{\circ}}]^G=f_2^{*}[\pp K^{\circ}]^G-j_{2*}\{P_N(1)(1+t+t^2+\dots)\cdot g_2^*\xi_*s^{\mathsf{wt}}(N_{J/\pp K^{\circ}})\}_{18}\,,
\end{equation} where $-t$ is the class of the exceptional divisor.

We analyze the two terms on the right hand side of equation \eqref{strictformula} separately. The fundamental class $[\pp K^{\circ}]$ can be determined analogously to the computation of $[\pp K_{\quadr}]$ in Section \ref{QP}. Because $\p K^{\circ}$ is linearly embedded in $\p T\times Y$, its equivariant fundamental class is given by 
\[
[\p K^{\circ}]^G=c^G_{12}(\rho_2^*P^S(\O(6))\otimes \O_{\rho_2}(1))=\sum_{k=0}^{12} \rho_2^*\,c^G_k(P^S(\O(6)))\cdot (\pi'\circ a')^*H^{12-k}\,.
\]

\begin{rem}\label{r15}
    Using the above calculation of the equivariant fundamental class, the $6$ classes
    \[
    (\pi''\circ a'')_*(f_2^{*}[\pp K^{\circ}]^G\cdot \tau^j z^i) = f^* (\pi' \circ a')_*([\pp K^{\circ}]^G\cdot \tau^j z^i)
    \]
    for $0\leq j \leq 1$ and $0\leq i \leq 2$ can be written explicitly in terms of the $f$ pullbacks of the generators $H,c_2,c_3$ of $\mathsf A^*_G(Y)$. The equivariant Chern classes of $P^S(\O(6))$ are computed using that $P^S(\O(6))$ is filtered by tensor products of symmetric powers of the tautological bundles on $\p T$ and the bundle $\O(6)$. More explicitly, we have the tautological sequence
    \[
    0\rightarrow \O_{\p T}(-1)\rightarrow a^*T\rightarrow Q\rightarrow 0\,.
    \]
    We set $\Omega_x=\O_{\p T}(1)$ and $\Omega_y=Q^*$. Then for each monomial $x^iy^j$ in $S$, the filtration will have successive quotients $\Sym^i\Omega_x\otimes \Sym^j\Omega_y\otimes \O(6)$. For more details on this filtration, see \cite[Section 3.2]{CanningLarson}. In any case, $$c_1(\Omega_x)=\tau\, , \quad c_1(\Omega_y)=-3z-\tau\, ,$$ and the Chern classes of $P^S(\mathcal O(6))$ follow from here. 
    
    We have implemented the calculation in the Macaulay2 package Schubert2 \cite{M2,S2}: 
    
    \noindent $\bullet$ For $i=0$ and $j=0$, we obtain
    \begin{align*}
            -36288c_2^3c_3&-244944c_3^3-(254592c_2^4+2610792c_2c_3^2)H-1154736c_2^2c_3H^2\\&+(848280c_2^3+4719870c_3^2)H^3+883548c_2c_3H^4-588546c_2^2H^5+61236c_3H^6\\&+118620c_2H^7-4362H^9\,.
    \end{align*}
    $\bullet$ For $i=1$ and $j=0$, we obtain
\begin{align*}   
34560c_2^5&+233280c_2^2c_3^2+(83376c_2^3c_3-2350296c_3^3)H-(623856c_2^4+6328044c_2c_3^2)H^2\\&-1198476c_2^2c_3H^3+(810240c_2^3+2435751c_3^2)H^4-180306c_2c_3H^5-316413c_2^2H^6\\&+84186c_3H^7+28950c_2H^8-381H^{10}\,.
 \end{align*}
$\bullet$    For $i=2$ and $j=0$, we obtain
\begin{align*}
70848c_2^4c_3&+478224c_2c_3^3+(254592c_2^5+2960712c_2^2c_3^2)H+(525240c_2^3c_3-4865832c_3^3)H^2\\&-(847664c_2^4+6647562c_2c_3^2)H^3-93798c_2^2c_3H^4+(589876c_2^3+567756c_3^2)H^5\\&-395280c_2c_3H^6-117822c_2^2H^7+26550c_3H^8+4432c_2H^9-14H^{11}\,.
\end{align*}
    $\bullet$ For $i=0$ and $j=1$, we obtain
\begin{align*}
-69120c_2^5&-466560c_2^2c_3^2-(616896c_2^3c_3-1032264c_3^3)H+ (1181376c_2^4+10152540c_2c_3^2)H^2\\&+2659392c_2^2c_3H^3-(1697460c_2^3+5233653c_3^2)H^4-441774c_2c_3H^5+623943c_2^2H^6\\&-30942c_3H^7-56070c_2H^8+831H^{10}\,.
    \end{align*}
   $\bullet$ For $i=1$ and $j=1$, we obtain
    \begin{align*}
        31104c_2^4c_3&+209952c_2c_3^3-(463104c_2^5+4805568c_2^2c_3^2)H-(1385856c_2^3c_3-5604552c_3^3)H^2\\&+(1718688c_2^4+12248496c_2c_3^2)H^3+845640c_2^2c_3H^4-(1212072c_2^3+1992276c_3^2)H^5\\&+343116c_2c_3H^6+225864c_2^2H^7-30564c_3H^8-9024c_2H^9+48H^{11}\,.
    \end{align*}
    $\bullet$ For $i=2$ and $j=1$, we obtain
    \begin{align*}
        69120c_2^6&+575424c_2^3c_3^2+734832c_3^4+(149472c_2^4c_3-4974696c_2c_3^3)H\\&-(1180896c_2^5+12642480c_2^2c_3^2)H^2-(953352c_2^3
        c_3-7605414c_3^3)H^3\\&+(1698256c_2^4+7821144c_2c_3^2)
        H^4-773712c_2^2c_3H^5-(623858c_2^3+346977c_3^2)H^6\\&+
        263682c_2c_3H^7+55773c_2^2H^8-7110c_3H^9-896c_2H^{10}
        +H^{12}\,.
    \end{align*}
\end{rem}

Next, we compute the contributions from the the second term
\[
j_{2*}\{P_N(1)(1+t+t^2+\dots)\cdot g_2^*\xi_*s^{\mathsf{wt}}(N_{J/\pp K^{\circ}})\}_{18}\,
\]
in equation \eqref{strictformula}. First, recall that the polynomial $P_N$ is the top weighted Chern class of the weighted normal bundle $N_{\pp T\times \mathsf{TC}/\pp T \times Y}=\text{pr}^*N_{\mathsf{TC}/Y}$. We recall from Section \ref{TCdiscussion} that 
\[N_{\mathsf{TC}/Y}=\Sym^8 W^*\oplus \Sym^{12} W^*\]
with weights $2$ and $3$, where $W$ is a rank $2$ bundle with trivial first Chern class such that $\Sym^2 W = V$. In particular, $c^G_2(W)=\frac{1}{4}c_2$. Therefore,
\[
P_N(t)=(\pi'\circ a')^*\left(\left(\sum_{i=0}^9 c^G_i(\Sym^8 W^*)(2t)^{9-i}\right)\left(\sum_{i=0}^{13} c^G_i(\Sym^{12} W^*)(3t)^{13-i}\right)\right)\,.
\]
In fact, \begin{align}\label{no}\,&\,\,\,\,P_N(t)=816293376\, t^{22} + 14375833344\, c_2 t^{20} + 106093574496 \,c_2^2 t^{18} \\ 
 &+428059655424\, c_2^3 t^{16} + 1034306024256\, c_2^4 t^{14} + 
 1543171803264\, c_2^5 t^{12} + 1416080524896\, c_2^6 t^{10} \nonumber \\
&+ 772698973824\, c_2^7 t^8+ 233467992576\, c_2^8 t^6 + 34249734144 c_2^9 t^4 + 
 1791590400\, c_2^{10} t^2\,.\nonumber\end{align}
Next, we calculate the weighted Segre class $s^{\mathsf {wt}}(N_{J/\pp K^{\circ}})$. Because $J$ is open in $\pp U$, $N_{J/\pp K^{\circ}}$ is just the restriction of $N_{\pp U/\pp K^{\circ}}$ to $J$. We first discuss the weight decomposition of this bundle. 

The bundle $K_{\ctp}$ was constructed as a subbundle
\[
K_{\ctp}\hookrightarrow a^*\pi^* \Sym^6 V^*
\]
corresponding to monomials in the set $S=\{1,x,y,x^2,xy,y^2,x^2y,xy^2,y^3,xy^3,y^4,y^5\}$, which are the monomials that must vanish for the sextic to lie in $(x,y^2)^3$. The filtration determining the weights is the restriction of the filtration determining the weights on $a^*\pi^*\Sym^6 V^*$. This filtration was constructed in \cite[Section 5]{S}. We fix the equation $Q$ of a nonsingular conic and obtain a filtration 
\[
\Sym^6 V^* \supset Q \cdot \Sym^4 V^*\supset Q^2\cdot \Sym^2 V^* \supset Q^3\cdot \mathbb{C} \,.
\] This corresponds to the decomposition of each sextic into pieces $$f=f_6+f_4 \cdot Q + f_2 \cdot Q^2+f_0\cdot Q^3.$$ The relevant successive subquotients are $$\Sym^6 V^*/\Sym^4 V^*\cong \Sym^{12} W^*, \quad \Sym^4 V^*/\Sym^2 V^*\cong \Sym^8 W^*,$$ which correspond to picking out the terms $f_6|_{Q}$ and $f_4|_{Q}$. The restriction of this filtration to $K_{\ctp}$ is denoted
\[
K_{\ctp}\supset Q\cdot K_{4}\supset Q^2\cdot K_{2} \supset Q^3 \cdot K_{0}\,.
\]

We can describe the pieces of the filtration more explicitly in terms of principal parts. In local coordinates, we write the equation for the nonsingular conic as $Q=x-y^2$. Then $K_{0} $ is spanned by $Q^3$. The bundle $K_{2}$ is given by the degree $2$ polynomials $h$ such that $$Q^2 h\in (x,y^2)^3.$$ Similarly, $K_{4}$ is given degree $4$ polynomials $h$ such that $$Qh\in (x,y^2)^3.$$ Therefore, sections of $K_{2}$ are degree $2$ polynomials with monomials in the set $\{x,x^2,xy,y^2\}$, and sections of $K_{4}$ are degree $4$ polynomials with monomials in $\{x^2,x^3,x^2y,xy^2,x^4,x^3y,xy^3,x^2y^2,y^4\}$. In other words, setting $S'=\{1,y\}$, we have an exact sequence
\begin{equation}\label{k2}
0\rightarrow K_2\rightarrow a^*\pi^*\Sym^2 V^*\rightarrow P^{S'}(\O(2))\rightarrow 0\,,
\end{equation}
where $P^{S'}(\O(2))$ is the bundle of principle parts along $S'$ in the sense of \cite[Definition 3.7]{CanningLarson}. Note that this was exactly the way in which $U$ was defined, so $K_2\cong U$. Similarly, we have
\begin{equation}\label{k4}
0\rightarrow K_4\rightarrow a^*\pi^*\Sym^4 V^*\rightarrow P^{S''}(\O(4))\rightarrow 0\,,
\end{equation}
where $S''=\{1,x,y,xy,y^2,y^3\}$. 
Therefore, the normal bundle $N_{J/\pp K^{\circ}}$ has the same weighted Chern class as
\[
K_4/K_2 \oplus K_{\ctp}/K_4
\]
with weight $2$ on the first factor and weight $3$ on the second. Here we have omitted the restriction to $J$ from the notation. Using the defining exact sequences \eqref{k2}, \eqref{k4}, \eqref{k} for $K_2, K_4$, and $K_{\ctp}$ allows us to compute the weighted Segre class $s^{\mathsf{wt}}(N_{J/\pp K^{\circ}})$ using standard Chern class manipulations. The resulting expression is explicit, but a bit long, so we record only a few terms $$s^{\mathsf{wt}}(N_{J/\pp K^{\circ}})=\frac{1}{2^{5}\cdot 3^{7}}\left(1+(2z-5\tau)+\frac{1}{36} (-2536 c_2 + 285 \tau^2 - 869 \tau z - 603 z^2)+\ldots\right).$$ Higher order terms are calculated using Macaulay2 \cite{COP}.

Next, we need to compute the pushforward $\xi_*s^{\mathsf{wt}}(N_{J/\pp K^{\circ}})$. Because every class on $J$ is pulled back from $\pp T \times \mathsf{TC}$ as was shown in the proof of Lemma \ref{pullbacksurjection}, the pushforward $\xi_*s^{\mathsf {wt}}(N_{J/\pp K^{\circ}})$ is given by multiplication with the fundamental class of $J$. The fundamental class of $J$ is given by the the top Chern class
\[
c^G_{2}(P^{S'}(\O(2))\otimes \O_{\pp U}(1))\,
\]
restricted to $J$. In fact, $\mathcal O_{\pp U}(1)$ is trivial since the hyperplane class is trivial on $\mathsf{TC}$, and thus $$[J]=c_2(P^{S'}(\O(2)))=2z\cdot (2z+c_1(\Omega_y))=2z\cdot (2z+(-3z-\tau))\,.$$

\begin{rem}\label{r16}
We have now described each term in the expression
\[
j_{2*}\{P_N(1)(1+t+t^2+\dots)\cdot g_2^*\xi_*s^{\mathsf{wt}}(N_{J/\pp K^{\circ}})\}_{18}\,.
\]
Using Macaulay2 \cite{COP}, we have computed the contributions coming from the exceptional terms:

\noindent $\bullet$ For $i=0$ and $j=0$, we obtain 
$$-398016\,c_2^4\,\left[\mathsf{Ell}\right] + 5472720\,c_2^3\,\left[\mathsf{Ell}\right]^3 - 11144448\,c_2^2\,\left[\mathsf{Ell}\right]^5 + 
 4477680\,c_2\,\left[\mathsf{Ell}\right]^7 - 279936\,\left[\mathsf{Ell}\right]^9\,.$$

\noindent $\bullet$ For $i=1$ and $j=0$, we obtain
$$1961568\,c_2^4\,\left[\mathsf{Ell}\right]^2-9611640\,c_2^3\,\left[\mathsf{Ell}\right]^4+8667504\,c_2^2\,\left[\mathsf{Ell}\right]^6-1474200 \,c_2\,\left[\mathsf{Ell}\right]^8+23328\, \left[\mathsf{Ell}\right]^{10}\,.$$

\noindent $\bullet$ For $i=2$ and $j=0$, we obtain 
$$398016 \,c_2^5\left[\mathsf{Ell}\right]-5472720\,c_2^4\,\left[\mathsf{Ell}\right]^3+11144448\,c_2^3\,\left[\mathsf{Ell}\right]^5-4477680\,c_2^2\,\left[\mathsf{Ell}\right]^7+279936\,c_2\,\left[\mathsf{Ell}\right]^9\,.$$

\noindent $\bullet$ For $i=0$ and $j=1$, we obtain
$$-3923136\,c_2^4\,\left[\mathsf{Ell}\right]^2+19223280\,c_2^3\,\left[\mathsf{Ell}\right]^4-17335008\,c_2^2\,\left[\mathsf{Ell}\right]^6+2948400\,c_2\,\left[\mathsf{Ell}\right]^8-46656\,\left[\mathsf{Ell}\right]^{10}\,.$$

\noindent $\bullet$ For $i=1$ and $j=1$, we obtain
$$-796032\,c_2^5\,\left[\mathsf{Ell}\right]+10945440\,c_2^4\,\left[\mathsf{Ell}\right]^3-22288896\,c_2^3\,\left[\mathsf{Ell}\right]^5+8955360\,c_2^2\,\left[\mathsf{Ell}\right]^7-559872\,c_2\,\left[\mathsf{Ell}\right]^9\,.$$

\noindent $\bullet$ For $i=2$ and $j=1$, we obtain
$$3923136\,c_2^5\,\left[\mathsf{Ell}\right]^2-19223280\,c_2^4\,\left[\mathsf{Ell}\right]^4+17335008\,c_2^3\,\left[\mathsf{Ell}\right]^6-2948400\,c_2^2\,\left[\mathsf{Ell}\right]^8+46656\,c_2\,\left[\mathsf{Ell}\right]^{10}\,.$$

\noindent These terms are to be {\it subtracted} from the terms in Remark \ref{r15} to yield the additional $6$ relations in the Chow group of $\mathcal F_2$ coming from the removal of $\widetilde{\mathsf{CTP}}.$
\end{rem}
\begin{rem} We can compare the calculations here with the results of \cite {CK}. In \cite[Theorem 1.2]{CK}, the Chow ring of the moduli space of elliptic $K3$ surfaces has been computed, and the ideal of relations has two generators in degrees $9$, $10$. An easy check shows that these relations arise by restricting the relations with $i=0$, $j=0$ and $i=1$, $j=0$ in Remark \ref{r15} and Remark \ref{r16} to the elliptic locus. 

Since the geometry of the weighted blowup and the geometry of $\mathsf{Ell}$ can be explicitly related,
the matching of relations is guaranteed. In fact, the relations in \cite[Remark 3.5]{CK} on $\mathsf{Ell}$ arise from removing classes supported on the codimension $9$ locus $\Delta$ of elliptic fibrations which are not stable, see \cite [Lemma 2.5(2)]{CK}. This corresponds precisely to the exceptional divisor of the blowup $\widetilde{\mathsf{CTP}}$, as discussed in Section \ref{TCdiscussion}.

\end{rem}
\subsection{Proof of Theorem \ref{main} {(iii)}}
    The quotient presentation of $\mathcal F_2$ in Section \ref{Shah} yields $$\mathsf A^{*}(\mathcal F_2)=\mathsf A^{*}_G(\tilde Y\smallsetminus (\widetilde{\mathsf{CTP}}\cup\widetilde {\mathsf{DC}}))\, .$$ In the ring $\mathsf A_G^{*}(X)=\qq[H,c_2,c_3]/(p)$, where\footnote{The exact expression for $p$ is easy to write down. However, this will not be needed for degree reasons.} \[p=H^{28}+c_1^G(\Sym^6 V^*)H^{27}+\cdots +c_{28}^G(\Sym^6 V^*)\, ,\] we quotient by the ideal generated by the relations from Lemmas \ref{nonreduced} and \ref{quadrelations} (explicitly written in Remark \ref{nonreducedremark} and Remark \ref{quadremark}) to obtain $\mathsf A^*_G(Y)$. Next, we carry out the weighted blowup along $\mathsf{TC}$, and we impose the relations in Corollary \ref{Ytilde} to obtain $\mathsf{A}_G^*(\tilde Y)$. The additional generator $\left[\mathsf{Ell}\right]$ appears at this stage. Finally, we impose the $6$ relations given by Corollary \ref{modulegenerators} and made explicit in Remark \ref{r15} and Remark \ref{r16}. In total, there are $14$ relations that we need to account for. Using the Macaulay2 package Schubert2 \cite{M2,S2} we find that 
\begin{align*}\sum_{k=0}^{19} t^k \cdot \dim \mathsf A_G^k(\tilde{Y}\smallsetminus \tilde{\mathsf{CTP}})=1 + &2t + 3t^2 + 5t^3 + 6t^4 + 8t^5 + 10t^6 + 12t^7 + 13t^8 + 14t^9 + 12t^{10} \\&+ 10t^{11} + 8t^{12} + 6t^{13} + 5t^{14} + 3t^{15} + 2t^{16} + t^{17}\,.\end{align*}

The above polynomial is precisely the polynomial in the statement of Theorem \ref{main}.
The space $\F_2$ is an open substack of $[(\tilde{Y}\smallsetminus \tilde{\mathsf{CTP}})/G]$ whose complement $\tilde{\mathsf{DC}}$ has components of codimension $17$ and $18$. By excision, it follows that the Poincar\'e polynomial of $\F_2$ agrees with that of $[(\tilde{Y}\smallsetminus \tilde{\mathsf{CTP}})/G]$, except for possibly the coefficient of $t^{17}$. We know, however, that $\dim \mathsf A^{17}(\F_2)\geq 1$ because $\lambda^{17}\neq 0$ \cite {kvg}. Therefore, we see that $\dim \mathsf A^{17}(\F_2)=1$.
\qed

\begin{rem}\label{soclerem} 
Let $I$ be the ideal of relations in Remarks \ref{nonreduced} and \ref{quadremark}. Using the Macaulay2 package Schubert2 \cite{M2,S2} we find that $$\mathsf A^*_G(X)/I=\mathsf A_G^{*}(Y\smallsetminus \mathsf{CTP})$$ has the Poincar\'e polynomial 
\begin{align}\label{YminusCTP}
\sum_{k=0}^{19}t^k\cdot \dim \mathsf A_G^k(Y\smallsetminus \mathsf{CTP})=1+&t+2t^2+3t^3+4t^4+5t^5+7t^6+8t^7+9t^8+9t^9+8t^{10}\\&+ 6t^{11}+5t^{12}+3t^{13}+3t^{14}+t^{15}+t^{16}\,.
\nonumber
\end{align}

By excision (and the fact that we can ignore the stratum $\widetilde{\mathsf{DC}}$), we have \begin{equation}\label{exseq}\mathsf A^{k-1}(\mathsf {Ell})\, \rightarrow\, \mathsf A^{k}(\mathcal F_2)\rightarrow {\mathsf A}^k_G(Y\smallsetminus \mathsf{CTP})\rightarrow 0\,.\end{equation}
The Chow Betti numbers of the right hand side are computed in \eqref {YminusCTP}, the Chow Betti numbers of the middle term are given in Theorem \ref{main} (iii), while the Chow Betti numbers of $\mathsf{Ell}$ are found in \cite [Theorem 1.2 (2)]{CK}:
\begin{align*}1 + t + 2 t^2 + 2 t^3 + 3 t^4 + 3 t^5 + &4 t^6 + 4 t^7 + 5 t^8 + 
 4 t^9 + 4 t^{10} + 3 t^{11} + 3 t^{12} + 2 t^{13} + 2 t^{14} + t^{15} + t^{16}.\end{align*}
Comparing ranks, it follows that the excision sequence \eqref{exseq} is exact to the left. 

In particular, the inclusion $r:\mathsf {Ell}\to \mathcal F_2$ induces an isomorphism between the top nonvanishing Chow groups $$r_*:\mathsf A^{16}(\mathsf {Ell})\, \stackrel{\sim}{\longrightarrow}\, \mathsf A^{17}(\mathcal F_2)\,.$$ We will use the isomorphism in Section \ref{cymap}.
\end{rem}

\begin{rem}\label{hc2} The description of the Chow ring of $\mathcal F_2$ in terms of generators and relations also allows us study the pairing $$\mathsf{R}^k(\mathcal F_2)\times \mathsf R^{17-k}(\mathcal F_2)\to \mathsf{R}^{17}(\mathcal F_2)\cong\mathbb Q.$$ For $k$
not equal to 8 or 9, the pairing is perfect.
The kernel of the paring 
$$\mathsf{R}^8(\mathcal F_2)\times \mathsf{R}^9(\mathcal F_2)\to \mathsf R^{17}(\mathcal{F}_2)
\cong\mathbb Q
$$
has rank $1$. The computations were carried out using code available at \cite{COP}. The class  in $\mathsf{A}^9(\mathcal{F}_2)$ generating the Gorenstein kernel is 
\begin{align*}
24&477984\, c_2^4\, \left[\mathsf{Ell}\right] - 336572280\, c_2^3\, \left[\mathsf{Ell}\right]^3 + 685383552\, c_2^2\, \left[\mathsf{Ell}\right]^5 
 -275377320\, c_2\,\left[\mathsf{Ell}\right]^7 + 17216064\, \left[\mathsf{Ell}\right]^9 \\&- 14925600 \,c_2^4\, H + 
 151121700\, c_2^2\, c_3\, H^2 + 108694300\, c_2^3\, H^3 - 
 125188470\, c_3^2\, H^3\\&- 305321805\, c_2\, c_3\, H^4 - 
 107744175\, c_2^2 H^5 + 65952495 c_3 H^6 + 19658950\, c_2\, H^7 - 
 646575\, H^9\,.
 \end{align*}
Finding a geometric interpretation of the kernel class is an open question. 
\end{rem}

\section {The cycle map} \label{cymap}
We present here the proof of part (iv) of Theorem \ref{main}. Throughout Section \ref{cymap},  $H_*$ will denote rational Borel--Moore homology \cite {BM}. In general, for any scheme or Deligne-Mumford stack $M$, the group $H_k(M)$ carries a mixed Hodge structure and an increasing weight filtration with weights between $-k$ and $0$. The cycle map takes values in the lowest weight piece of the Hodge structure  $$\mathsf{cl}: \mathsf {A}_k(M)\to W_{-2k}H_{2k}(M)\,.$$ If $M$ is nonsingular, we identify cohomology and Borel--Moore homology, but singular spaces will also enter the discussion.


We seek to show that the cycle map $$\mathsf{cl}:\mathsf A_k(\mathcal F_2)\to H_{2k} (\mathcal F_2)$$ is an isomorphism. Using the expressions for the Poincar\'e polynomial calculated in \cite{KL2} and Appendix \ref{appendix} together with the Chow Betti numbers from Theorem \ref{main} (iii), it suffices to prove that the cycle map is injective. 
We will prove below the following related injectivity.

\begin{lem}\label{lcycle} The cycle map $$\mathsf{cl}: {\mathsf A}_k^G(Y\smallsetminus \mathsf{CTP})\to W_{-2k}H_{2k}^{G}(Y\smallsetminus {\mathsf{CTP}})$$ is injective. 
\end{lem}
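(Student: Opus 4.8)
The plan is to deduce the injectivity from the corresponding isomorphism on the ambient space $X$, by comparing the localization sequences in equivariant Chow groups and in equivariant Borel--Moore homology. Write $U=Y\smallsetminus\mathsf{CTP}$ and let $Z=\mathsf{ML}\cup\mathsf{QP}\cup\mathsf{CTP}$ be its closed complement in $X$, with inclusion $\iota\colon Z\hookrightarrow X$. Throughout I would replace $\mathrm{B}G$ by the finite-dimensional approximations of \cite{EG,Kr}, so that all quotients become honest quasi-projective varieties carrying Deligne mixed Hodge structures; all cycle maps are then compatible with proper pushforward and with the localization sequences, and all statements below are understood in the stable range.

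First I would record that $\mathsf{cl}\colon \mathsf A_*^G(X)\to H_*^G(X)$ is an isomorphism: since $[X/G]$ is a projective bundle over $\mathrm{B}G$ and $\mathsf A^*(\mathrm{B}G)=\mathbb Q[c_2,c_3]=H^*(\mathrm{B}G)$ for $G=\SL(V)$, both sides are free modules with matching generators by the projective bundle formula. In particular $H_{2k}^G(X)$ is pure of weight $-2k$ in the stable range. The essential geometric input is then the surjectivity of the cycle map onto the lowest weight piece, $\mathsf A_k^G(Z)\twoheadrightarrow W_{-2k}H_{2k}^G(Z)$. By Mayer--Vietoris for the closed cover of $Z$ by its components, together with a weight bound (the connecting maps land in $H_{2k-1}^G$, whose weights all exceed $-2k$), the lowest weight part of $H_{2k}^G(Z)$ is the sum of the lowest weight parts of $\mathsf{ML}$, $\mathsf{QP}$, $\mathsf{CTP}$. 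Each component is the image of a proper morphism from a smooth iterated projective bundle over $\mathrm{B}G$ --- namely $\mathbb P\Sym^4 V^*\times\mathbb P V^*$, $\mathbb P K_{\quadr}$, and $\mathbb P K^{\circ}$, respectively --- whose cohomology is of Tate type and entirely algebraic. An induction on dimension using the abstract blow-up sequence for a resolution then shows that $W_{-2k}H_{2k}^G$ of each component is generated by fundamental classes of subvarieties; since the cycle map commutes with proper pushforward, the claimed surjectivity follows.

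Granting these two inputs, the injectivity is a formal diagram chase. Let $\alpha\in\mathsf A_k^G(U)$ satisfy $\mathsf{cl}(\alpha)=0$, and lift it to $\tilde\alpha\in\mathsf A_k^G(X)$ using right-exactness of the Chow localization sequence. Because the cycle map commutes with restriction to the open $U$, the class $\mathsf{cl}(\tilde\alpha)$ restricts to $\mathsf{cl}(\alpha)=0$, so exactness of the Borel--Moore localization sequence gives $\mathsf{cl}(\tilde\alpha)=\iota_*\beta$ for some class $\beta$ on $Z$. Since $\mathsf{cl}(\tilde\alpha)$ lies in the pure space $W_{-2k}H_{2k}^G(X)$ while $\iota_*$ is strict for the weight filtration, $\beta$ may be taken in $W_{-2k}H_{2k}^G(Z)$. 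By the lowest-weight surjectivity, $\beta=\mathsf{cl}(\gamma)$ for some $\gamma\in\mathsf A_k^G(Z)$, whence $\mathsf{cl}(\tilde\alpha-\iota_*\gamma)=0$ in $H_{2k}^G(X)$; as $\mathsf{cl}$ is injective on $X$, we get $\tilde\alpha=\iota_*\gamma$, which restricts to zero on $U$, forcing $\alpha=0$.

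The main obstacle is the lowest-weight surjectivity of the second paragraph: although each component of $Z$ is dominated by an explicit smooth stack with algebraic cohomology, one must check that the singular loci and mutual intersections arising in the dimension induction remain of Tate type, so that no transcendental lowest-weight classes obstruct algebraic generation. The explicit description of these loci via bundles of principal parts over products of projective and symmetric-power spaces (as already used to build $\mathbb P K_{\quadr}$ and $\mathbb P K^{\circ}$) is what makes this verification feasible; once it is in place, the weight-strictness chase above is purely formal.
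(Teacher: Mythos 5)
Your overall architecture is the same as the paper's: the identical excision diagram comparing $\mathsf{A}^G_k$ with $W_{-2k}H^G_{2k}$, the Mayer--Vietoris reduction over the three components of $Z$ with the same weight bound on $H^G_{2k-1}$, the same three smooth dominating spaces, and the same concluding chase. Your direct computation of the middle isomorphism (projective bundle over $\mathrm{B}\SL(V)$ with $\mathsf{A}^*(\mathrm{B}G)\cong H^*(\mathrm{B}G)=\mathbb{Q}[c_2,c_3]$) is a legitimate, slightly more streamlined substitute for the paper's detour through $\GL(V)$ and Totaro's weak/strong properties, which exists to control the non-projective $\mathbb{C}^*$-bundle approximations of $\mathrm{B}\SL_3$; either way this step is fine. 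One small slip: since $Z_3=\mathsf{CTP}$ is the full closed locus in $X$, the dominating space must be $\mathbb{P}K_{\ctp}$ (as in the paper), not its restriction $\mathbb{P}K^{\circ}$ over $Y$, whose image is only the locally closed piece $\mathsf{CTP}\cap Y$.

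The genuine gap is exactly the step you flag as the ``main obstacle'' and then defer: you propose to prove $\mathsf{A}^G_k(Z_i)\twoheadrightarrow W_{-2k}H^G_{2k}(Z_i)$ by a dimension induction showing the lowest weight piece is ``generated by fundamental classes of subvarieties,'' contingent on checking that all singular loci and intersections are of Tate type. That verification is not a routine formality but the entire content of the step, because the underlying general claim is false: for a smooth projective surface $S$ one has $W_{-2}H_{2}(S)=H_{2}(S)$, which contains the transcendental part of $H^2(S)$ and is not spanned by algebraic classes. So your induction would genuinely require Tate-ness of every stratum entering the abstract blow-up sequences (the singular loci of $\mathsf{ML}$, $\mathsf{QP}$, $\mathsf{CTP}$, their pairwise intersections such as $\mathsf{TC}\subset\mathsf{CTP}$, exceptional loci of resolutions, and so on), none of which you establish, and the proposal as written does not constitute a proof. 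The paper bypasses all of this with a single standard fact of mixed Hodge theory, cited as \cite[Lemma A.4]{L} or \cite{P2}: for \emph{any} proper surjection $T\to Z$, the pushforward $W_{-2k}H_{2k}(T)\to W_{-2k}H_{2k}(Z)$ is surjective, with no hypothesis on $Z$ or on the fibers. Since each $T_i$ is an iterated projective bundle over projective spaces (equivariantly, in the stable range of the approximations), its cohomology is pure Tate and its cycle map is an isomorphism, in particular surjective onto $W_{-2k}H^G_{2k}(T_i)$; compatibility of the cycle map with proper pushforward then yields the needed surjectivity on each $Z_i$ at once. If you replace your induction by this lemma, the remainder of your weight-strictness chase, which is correct, goes through verbatim.
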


Assuming Lemma \ref{lcycle} for now, let $\alpha\in \mathsf A_k(\mathcal F_2)$ be so that $\mathsf{cl}(\alpha)=0$. We wish to show $\alpha=0.$ If $\alpha$ is not in the kernel of the intersection pairing in $\mathsf A^*(\mathcal F_2)$, we can find a class $\alpha'$ of complementary degree so that $\alpha\cdot \alpha'\neq 0.$ In particular, we may assume $\alpha\cdot \alpha'=\lambda^{17}$ since the latter generates $\mathsf A^{17}(\mathcal F_2).$ Then, $$0=\mathsf{cl}(\alpha)\cdot \mathsf{cl}(\alpha')=\mathsf{cl}(\lambda)^{17} .$$ However, the same argument used in Chow in \cite {kvg} shows that in cohomology we also have $\lambda^{17}\neq 0,$ yielding a contradiction. 

Thus $\alpha$ must be in the kernel of the intersection paring. In particular $\alpha$ must be the class in Remark \ref{hc2}, up to a multiple, but we will not use this fact in the proof below; we only care that $k>2$. We can write $$\alpha=\zeta^{*} \beta\,.$$ Here, we recall that $\zeta:\mathcal F_2\hookrightarrow[\widetilde Y/G]$ with complement $\widetilde{\mathsf{DC}}$ and $\widetilde{\mathsf{CTP}},$ and $\beta$ is a Chow class on $[\widetilde Y/G].$

Consider the diagram
\[
\begin{tikzcd}
\mathsf A^G_k(\widetilde{\mathsf{CTP}})\arrow[r, "\eta_*"]\arrow[d, "\mathsf{cl}"]& \mathsf A_k^G(\tilde Y) \arrow[r, "\zeta^*"]\arrow[d, "\mathsf{cl}"] & \mathsf A^G_k(\tilde Y\smallsetminus (\widetilde{\mathsf {CTP}}\cup\widetilde{\mathsf{DC}}))\arrow [r] \arrow[d, "\mathsf{cl}"]& 0                              \\
W_{-2k}H^G_{2k}(\widetilde{\mathsf{CTP}})\arrow[r, "\eta_*"]& W_{-2k}H_{2k}^G(\tilde Y) \arrow[r, "\zeta^{*}"] & W_{-2k}H^G_{2k}(\tilde Y\smallsetminus (\widetilde{\mathsf {CTP}}\cup \widetilde{\mathsf{DC}}))\arrow [r] & 0 \,.                            
\end{tikzcd}
\]
For the second excision sequence, exactness to the right follows since we keep track of the Hodge weights. For both exact sequences, we may ignore the two dimensional set $\widetilde{\mathsf{DC}}$ on the left terms for dimension reasons. 
Since $$\zeta^{*}\, (\mathsf{cl}(\beta))=\mathsf{cl}\, (\zeta^{*} 
(\beta))=\mathsf{cl} (\alpha)=0\,,$$ it follows that over $\tilde Y$, we have $$\mathsf{cl}(\beta)=\eta_{*}\,(\gamma)$$ where $\gamma$ is an equivariant Borel--Moore homology class on the locus $\widetilde{\mathsf{CTP}}.$ Using the blowdown map $f:\tilde Y\to Y,$ we obtain $$f_{*} \,\mathsf{cl} ({\beta})=f_{*}\, \eta_{*}\,(\gamma)\,,$$ where the right hand side is a Borel--Moore class on $\mathsf{CTP}.$ The restriction $f_{*}\, \mathsf{cl}(\beta)=\mathsf{cl}\,(f_{*}\beta)$ thus vanishes in the Borel--Moore homology of $\left[\left(Y\smallsetminus {\mathsf{CTP}}\right)/G\right]$, so by Lemma \ref{lcycle}, we conclude $$f_* \,(\beta)=0$$ in $\mathsf A^G_k(Y\smallsetminus \mathsf{CTP}).$ By excision, we can find a class $\delta$ such that on $Y$ we have $$f_{*}\, (\beta)=\bar \eta_{*} \,(\delta)\, ,$$ where $\bar \eta: \mathsf{CTP}\cap Y\hookrightarrow Y.$ In particular $$f_{*} (\beta - f^{*} \bar \eta_{*}\,(\delta))=0$$ in $\mathsf A^G_k(Y)$, hence $$\beta-f^{*} \bar \eta_{*}\, (\delta)$$ is a class supported on the exceptional divisor of the blowup $f:\tilde Y\to Y$ by excision applied to the embedding of the exceptional divisor in $\tilde Y$. We restrict the class $\beta-f^*\bar \eta_*\, (\delta)$ to $\mathcal F_2$ via $\zeta$, and note that $f^{*} \bar \eta_{*}\, (\delta)$ restricts trivially since we removed the strict transform $\widetilde {\mathsf{CTP}}.$
We conclude that $$\alpha=r_*\, (\epsilon)\,,$$ for a class $\epsilon$ on $\mathsf{Ell},$ where as usual $r:\mathsf {Ell}\to \mathcal F_2$ denotes the inclusion. We claim however that in this case $\alpha$ cannot be in the kernel of the intersection pairing unless $\alpha=0$. 

To see this last statement, recall from \cite{CK} that $\mathsf A^*(\mathsf{Ell})$ is Gorenstein. If $\epsilon\neq 0,$ we can find a complementary class $\epsilon'$ with $$\epsilon\cdot \epsilon'=\lambda^{16}\,.$$ The pullback $$r^{*}:\mathsf A^{*}(\mathcal F_2)\to \mathsf A^{*}(\mathsf{Ell})$$ is surjective, since two of the ring generators $\lambda$, $c_2$ on the left hand side are sent to the ring generators $\lambda$, $4c_2$ on the right hand side. Thus, we may write $$\epsilon'=r^{*}\xi\,.$$ Since $\alpha$ is in the kernel of the pairing, we have $$0=\alpha\cdot \xi=r_{*}\,(\epsilon)\cdot \xi = r_{*}\,(\epsilon\cdot r^{*}\xi)=r_{*}\,(\epsilon\cdot \epsilon')=r_{*}\,(\lambda^{16})\,.$$ This contradicts Remark \ref{soclerem}. \qed 
\vskip.1in
\noindent {\it Proof of Lemma \ref{lcycle}.} For simplicity, write $$Z=\mathsf {ML}\cup \mathsf{QP}\cup \mathsf{CTP}\subset X$$ where as before $X$ denotes the projective space of sextics. Then $Y\smallsetminus \mathsf{CTP}=X\smallsetminus Z.$ We need to establish the injectivity of the map $$\mathsf{cl}:{\mathsf A}^G_k(X\smallsetminus Z)\to W_{-2k}H_{2k}^G(X\smallsetminus Z)\,.$$ Consider the following excision diagram

\begin{equation}\label{cycleexcision}
\begin{tikzcd}
\mathsf A^G_k(Z)\arrow[r]\arrow[d, "\mathsf{cl}"]& \mathsf A_k^G(X) \arrow[r]\arrow[d, "\mathsf{cl}"] & \mathsf A^G_k(X\smallsetminus Z)\arrow [r] \arrow[d, "\mathsf{cl}"]& 0                              \\
W_{-2k}H^G_{2k}(Z)\arrow[r]& W_{-2k}H_{2k}^G(X) \arrow[r] & W_{-2k}H^G_{2k}(X\smallsetminus Z)\arrow [r] & 0 \,.                             \\
\end{tikzcd}
\end{equation}

We first claim that the middle cycle map is an isomorphism. Indeed, recall that $G=\SL(V)$ and let $K=\GL(V)$. We have an isomorphism $$\mathsf{A}^K_k(X)\to H_{2k}^{K}(X)\,.$$ This follows by explicitly computing both sides. In fact, both sides agree with the cohomology of the bundle $$X_K=\mathbb P(\Sym^6 E^{*})\to BK\, ,$$ where $E\to \BGL(V)$ is the universal bundle. To go further, we use the terminology of \cite [Section 4]{To}. There, two properties are singled out: the weak property is the statement that the cycle map is an isomorphism, while the strong property requires additional assumptions about odd cohomology, which vanishes for $X_K$. In other words, $\mathbb P(\Sym^6 E^{*})\to BK$ satisfies the strong property. To pass to the group $G$, we note that the mixed space $X_G\to X_K$ is a $\mathbb C^{*}$-bundle obtained from the total space of the determinant line bundle $F=\det \text{pr}^* \Sym^6 E^{*}$ on $\mathbb P(\Sym^6 E^{*})$ and removing the zero section. By homotopy equivalence, $F$ also satisfies the strong property since $X_K$ does, and the zero section satisfies it as well. The complement satisfies the weak property by \cite [Lemma 6]{To}, as claimed.

To show the rightmost cycle map is injective in the diagram \eqref{cycleexcision}, it suffices to show the leftmost cycle map is surjective. For simplicity, write $$Z_1=\mathsf{ML}\,,\quad Z_2=\mathsf{QP}\,,\quad Z_3=\mathsf{CTP}$$ for the three components of $Z$, and write $T_1, T_2, T_3$ for the nonsingular spaces that dominate them $$T_1=\mathbb P(V^{*})\times \mathbb P(\Sym^4 V^{*})\,,\quad T_2=\mathbb P K_{\textrm{quad}}\,, \quad T_3=\mathbb PK_{\textrm{ctp}}\, .$$ By Mayer--Vietoris in both Chow \cite [Example 1.3.1(c)]{Fulton} and Borel--Moore homology \cite [Theorem 3.10]{BM} and \cite[Theorem 5.35 and Remark 5.36]{PS}, we have a diagram 
\[
\begin{tikzcd}
\mathsf A^G_k(Z_1)\,\oplus\, \mathsf A^G_k(Z_2)\, \oplus \,\mathsf A^G_k(Z_3)\arrow[r]\arrow[d, "\mathsf{cl}"]& \mathsf A_k^G(Z) \arrow[r]\arrow[d, "\mathsf{cl}"] &0                      \\
W_{-2k}H^G_{2k}(Z_1)\,\oplus W_{-2k}H_{2k}^G(Z_2)\, \oplus W_{-2k}H^G_{2k}(Z_3)\arrow[r]& W_{-2k}H_{2k}^G(Z) \arrow[r] &0\,.           
\end{tikzcd}
\]
The surjectivity of the second row follows since the $(2k-1)$st Borel--Moore homology groups has no Hodge pieces of weight $-2k$. Therefore, to complete the proof we need to check the surjectivity of the cycle map on the left.

For each $1\leq i\leq 3,$ we form the diagram \[
\begin{tikzcd}
\mathsf A^G_k(T_i)\arrow[r]\arrow[d, "\mathsf{cl}"]& \mathsf A_k^G(Z_i) \arrow[r]\arrow[d, "\mathsf{cl}"] &0                      \\
W_{-2k}H^G_{2k}(T_i)\arrow[r]& W_{-2k}H_{2k}^G(Z_i) \arrow[r] &0\,.           
\end{tikzcd}
\] Surjectivity of the first row is standard (and, in fact, is not necessary for us), while surjectivity of the second row is found in \cite[Lemma A.4]{L} or \cite {P2}. 
The final step is
then to prove that the cycle map  on the left
is surjective. The
left cycle map is an isomorphism by the same argument used for $X$ using the explicit description of $T_1, T_2, T_3$ as iterated projective bundles over projective spaces. \qed


\appendix 
\section {The Poincar\'e polynomial of the moduli space}\label{appendix}

\subsection{The results of Kirwan and Lee} We discuss here the calculation of the Poincar\'e polynomial of $\mathcal F_2$ in \cite {KL1, KL2}. The value of the Poincar\'e polynomial given in \cite[Theorem 3.1] {KL2} is 
\begin{multline}\label{certainlyincorrect} 1 + 2q^2 + 3q^4 + 5q^6 + 6q^8 + 8q^{10} + 10q^{12} + 12q^{14} + 13q^{16} + 14q^{18} + 12q^{20} \\+ 10q^{22}  + 8q^{24} + 6q^{26} + q^{27} + 5q^{28} + 3q^{30} + q^{31} + 2q^{32} + q^{33} + 3q^{35}\, .\end{multline} 
However, the above  polynomial is incompatible with the geometry of the moduli space. Indeed, the projective Bailey-Borel compactification $$\mathcal F_2\hookrightarrow \overline{\mathcal F}^{\text{\,BB}}$$ has a $1$-dimensional boundary. Using this observation, it was shown in \cite {kvg} that \begin{equation}\label{nonvvv} \lambda^{17}\neq 0\in H^{34}(\mathcal F_2).\end{equation} In fact, intersecting two general hyperplane sections of $\overline{\mathcal F}^{\text{\,BB}}$ gives a compact $17$-dimensional subvariety of $\mathcal F_2$ on which $\lambda^{17}$ is non-zero by the ampleness of $\lambda$. However, this contradicts the vanishing $H^{34}(\mathcal F_2)=0$ implied by \eqref{certainlyincorrect}.

The value of the Poincar\'e polynomial used throughout our paper is 
\begin{multline}\label{hopefullycorrect} 1 + 2q^2 + 3q^4 + 5q^6 + 6q^8 + 8q^{10} + 10q^{12} + 12q^{14} + 13q^{16} + 14q^{18} + 12q^{20} \\+ 10q^{22}  + 8q^{24} + 6q^{26} + q^{27} + 5q^{28} + 3q^{30} + q^{31} + 2q^{32} + 2q^{33} + q^{34}+3q^{35}\, ,\end{multline} which differs from \eqref{certainlyincorrect} by $q^{33}+q^{34}$. The correction is aligned with the non-vanishing \eqref{nonvvv} of cohomology in degree $34.$

The main error in \cite{KL2} is in the proof, but not the statement, of Proposition 3.2. First, in \cite[Lemma 5.6]{KL2}, Kirwan and Lee claim to describe the image of a certain map $\tau_2^*$, but actually only describe a proper subspace of the image. This impacts the proof, but again not the statement, of \cite[Lemma 5.7]{KL2}. The inaccurate claim in the proof is used on \cite[page 581]{KL2} to study the kernel of another map $\chi^4$, ultimately leading to the erroneous Poincar\'e polynomial \eqref{certainlyincorrect}.

In Section \ref{correctedpoly}, we explain how to derive the correct Poincar\'e polynomial \eqref{hopefullycorrect} using the statement of \cite[Proposition 3.2]{KL2} together with the non-vanishing \eqref{nonvvv}. The latter fact was not used by Kirwan-Lee. In order to explain the issues regarding the proof of \cite[Proposition 3.2]{KL2}, a lengthier discussion of Kirwan-Lee's beautiful but intricate argument
is required. A correct derivation is explained in Section \ref{further} after we describe the geometric set-up and a few intermediate results in Sections \ref{KDesing} and \ref{Shahcompact}.  

\subsection{Kirwan's desingularization}\label{KDesing} The approach in \cite {KL1, KL2} starts with the GIT quotient of the space of sextics $$\overline{\mathcal F}^{\text{\,GIT}}=X\sslash G\, ,$$ where $X=\mathbb P^{27}$ and $G=\SL_3$. Kirwan's partial desingularization $$\overline{\mathcal F}^{\text{\,K}}=\widetilde X\sslash G$$ arises as a composition of four (weighted) blowups. It is obtained by first blowing up $X^{ss}$ along the orbits whose stabilizers have the highest dimension, deleting the unstable strata
in the blowup, and then repeating the same procedure to the resulting space. The partial desingularization $\overline{\mathcal F}^{\text{\,K}}$ possesses only finite quotient singularities, whereas the singularities of $\overline{\mathcal F}^{\text{\,GIT}}$ are more complicated. One of the main results of \cite {KL1} is the calculation of the Betti numbers of $\overline{\mathcal F}^{\text{\,K}}$:
\begin{multline}\label{kirwandesing}
1 + 5q^2 + 11 q^4 + 18 q^6 + 25 q^8 + 32 q^{10} + 40 q^{12} + 48 q^{14}
+ 55 q^{16} + 60 q^{18} + 60 q^{20} + 55 q^{22} \\+ 48 q^{24} + 40 q^{26} + 32q^{28}
+ 25 q^{30} + 18 q^{32} + 11q^{34} + 5q^{36} +q^{38}\,.
\end{multline}
This is used in \cite {KL2} to compute the Betti numbers of $\mathcal F_2$, viewing the latter as an open in $\overline{\mathcal F}^{\text{\,K}}.$\footnote{There are a few minor typos in the proof of \cite [Theorem 1.3]{KL1}. On the table in \cite [page 499]{KL1}, the locus labelled $(1, 0)$ corresponds to the stratum of unstable sextics of the form $\ell^5m$ where $\ell, m$ are distinct lines. This stratum contributes $$\frac{q^{46}}{(1-q^2)^2}\, .$$ 
As a result, formula \cite [Section 2.4, (1)]{KL1} should read
$$\frac{1-q^{50}}{(1-q^2)(1-q^4)(1-q^6)}-\frac{q^{20}-q^{28}}{(1-q^2)^3}\, .$$ Similarly, formula \cite[Section 4.2, (2)]{KL1} should be $$\frac{q^{22}-q^{42}}{(1-q^2)^2}\, .$$ Additionally, there is a misprint in \cite[Section 5.2, (1)]{KL1} which should read $$\frac{1+q^2}{(1-q^2)^2}(q^{16}+q^{32}-2q^{38})\, .$$ There are a few other small misprints but they do not affect the general argument.}

While the Chow groups of $\overline{\mathcal F}^{\text{\,K}}$ are not needed for our paper, in the spirit of Section \ref{cymap}, we expect that the cycle map $$\mathsf A^*(\overline{\mathcal F}^{\text{\,K}})\to H^{2*}(\overline{\mathcal F}^{\text{\,K}})$$ is an isomorphism.

\subsection{Shah's compactification}\label{Shahcompact} The first step of the desingularization procedure yields Shah's compactification $$\overline{\mathcal F}^{\text{\,Sh}}=X_1\sslash G\, .$$ Here $X_1$ is the weighted blowup of the triple conic locus $\mathsf{TC}$ in the locus of semistable sextics: $$\pi:X_1\to X^{ss}\, .$$ Indeed, $\mathsf{TC}$ is the orbit with the largest stabilizer, namely $R_0=\mathrm{SO}_3=\PSL_2$. 

Three further (unweighted) blowups are necessary to arrive at $\overline{\mathcal F}^{\text{\,K}},$ see \cite [page 504]{KL1}. The first of the remaining three blowups has as center the orbit $G \Delta$, where the reducible sextic $\Delta=(xyz)^2$ is invariant under the maximal torus $R_1$ in $G$. The final two blowups have as centers the orbits $G\widehat Z_{R_2}^{ss}$ and $G\widehat Z_{R_3}^{ss},$ where $\widehat Z_{R_2}^{ss},$ $\widehat Z_{R_3}^{ss}$ are the loci of semistable points fixed by two specific rank $1$ tori $R_2,$ $R_3$: $$R_2=\text{diag }\langle \lambda^{-2}, \lambda, \lambda\rangle, \quad R_3=\text{diag }\langle \lambda, \lambda^{-1}, 1\rangle, \quad \lambda\in \mathbb C^*\, .$$ In fact, the locus $G\widehat Z_{R_3}^{ss}$ (which will be relevant below) lies over the locus of products of three conics tangent at $2$ points \cite [Section 5.3]{KL1}. 

There is a blowdown map $$\overline{\mathcal F}^{\text{\,K}}\to \overline {\mathcal F}^{\text{\,Sh}}\, ,$$ which is an isomorphism over the stable locus $X_1^s/G$ of the Shah space. The moduli space $$\mathcal F_2\hookrightarrow X_1^s/G$$ is obtained by removing the union of a line and a surface $$Z=(A\smallsetminus \Delta)\cup (B\smallsetminus (B\cap D))\, .$$ The locus $A$ is a projective line passing through the point $\Delta=(xyz)^2$ and corresponds to the double cubic locus in $X$. The surface $B$ corresponds to the double conic + conic locus, and $B\cap D$ is a curve in $B$. 

The Poincar\'e polynomial of $X_1^s/G$ is computed in \cite [Proposition 3.2]{KL2}: 
\begin{multline}\label{x1s}
1 + 2 q^2 + 3 q^4 + 5 q^6 + 6 q^8 + 8q^{10} + 10 q^{12}
+ 12 q^{14} + 13 q^{16} + 14 q^{18} + 12 q^{20} \\+ 10 q^{22} + 8 q^{24} + 6 q^{26}
+q^{27} + 5q^{28} + 3 q^{30} +q^{31} + 2q^{32} +q^{33} +q^{34} +q^{35}.\end{multline}
This calculation is very important for the overall argument.\footnote{The intersection homology of the Shah compactification was computed in \cite [Theorem 1.2]{KL1}. On general grounds, see \cite [Remark 3.4]{invent}, the Betti numbers of $X_1^s/G$ agree with the intersection homology Betti numbers in degree less than roughly the dimension (up to a correction dictated by the unstable strata). In our case, this confirms the Poincar\'e polynomial of $X_1^s/G$ in degrees $\leq 16$. However, the remaining Betti numbers cannot be immediately derived from \cite{KL1}.}

\subsection {The Poincar\'e polynomial of $\mathcal F_2$}\label{correctedpoly} We confirm the Poincar\'e polynomial \eqref{hopefullycorrect} relying on equation \eqref{x1s}. Just as on \cite [page 580]{KL2}, we use the relative homology sequence for the pair $(X_1^s/G, \mathcal F_2)$. The difference is that we take into account that $H^{34}(\mathcal F_2)\neq 0$, thus leading to the different result \eqref{hopefullycorrect}. 

First, we note the Gysin isomorphism $$H_i(X_1^s/G, \mathcal F_2)=H^{38-i}_c(Z)\, .$$ In fact, we have $$H_c^0(Z)=0\,, \quad H_c^2(Z)=\mathbb Q\oplus \mathbb Q\,, \quad H_c^4(Z)=\mathbb Q\, ,$$ see \cite [(6.3)]{KL2}.
The relative homology sequence 
$$\ldots \to H_i(\mathcal F_2)\to H_i (X_1^s/G)\to H_c^{38-i}(Z)\to H_{i-1}(\mathcal F_2)\to\ldots$$
immediately yields isomorphisms $$H_i(\mathcal F_2)=H_{i}(X_1^s/G), \quad i\leq 32, \quad i=37, \quad i=38\, .$$ Expressions \eqref{certainlyincorrect}, \eqref{hopefullycorrect}, \eqref{x1s} all agree in degrees $\leq 32$ and $i=37$, $i=38$.  Furthermore, 
$$0\to H_{36} (\mathcal F_2) \to H_{36} (X_1^s/G) \to \mathbb Q \oplus \mathbb Q \to H_{35} (\mathcal F_2)\to  H_{35} (X_1^s/G)\to 0\, .$$ Using \eqref{x1s}, we have $$H_{36}(X_1^s/G)=0\,, \quad H_{35}(X_1^s/G)=\mathbb Q\, .$$ Therefore $$H_{36}(\mathcal F_2)=0 \,, \quad H_{35}(\mathcal F_2)=\mathbb Q\oplus \mathbb Q\oplus \mathbb Q\, ,$$ also in agreement with both \eqref{certainlyincorrect} and \eqref{hopefullycorrect}.

However, discrepancies appear in degrees $33$ and $34$. We have 
$$0 \to H_{34} (\mathcal F_2) \to H_{34} (X_1^s/G) \to \mathbb Q \to H_{33}(\mathcal F_2) \to H_{33}(X_1^s/G) \to 0\, .$$ Using $H_{34} (X_1^s/G)= \mathbb Q$ from \eqref{x1s} and the fact that $H_{34} (\mathcal F_2)\neq 0$ as noted above, it follows that the first map must be an isomorphism and $$H_{34}(\mathcal F_2)=\mathbb Q\, .$$ Using \eqref{x1s} one more time, we have $H_{33}(X_1^s/G)= \mathbb Q$, hence $$H_{33}(\mathcal F_2)=\mathbb Q\oplus \mathbb Q\, .$$ This confirms equation  \eqref{hopefullycorrect}. 

    

\subsection{Further discussion}\label{further} To give further credence to \eqref{hopefullycorrect}, we 
also identify the faulty reasoning in \cite {KL2}. To this end, we need to zoom in on the argument. We will explain that while \eqref{x1s} records the correct Poincar\'e polynomial of $X_1^s/G$, there are some errors in the derivation. The necessary corrections impact the last page of \cite {KL2}, and thus the final result. 

The strategy used to establish \eqref{x1s} is as follows: 
\begin{itemize} 
\item [(i)] a lower bound on the Betti numbers of $X_1^s/G$ is obtained from the Poincar\'e polynomial of $\overline{\mathcal F}^{\text{\,K}}$ in \eqref{kirwandesing} together with the relative homology sequence for the pair $$X_1^s/G\hookrightarrow \overline{\mathcal F}^{\text{\,K}}\, .$$ 
The resulting lower bounds for Betti numbers of $X_1^s/G$ are recorded in \cite [(3.11), (3.12)]{KL2}. 
\item [(ii)] Matching upper bounds are obtained in \cite [Sections 4, 5]{KL2}. The outcome is \cite[Corollary 5.11]{KL2}. 
\end{itemize}
In fact, steps (i) and (ii) are only carried out in degrees less or equal than $23$, while the higher terms are determined in \cite[Section 6]{KL2}.   

Step (i) requires the calculation of the Poincar\'e polynomial of the complement\footnote{The top term of the Poincar\'e polynomial of $Q$ in \cite[(3.8)]{KL2} should be $3q^{36}$, taking into account the correction $3q^{30}$ versus $3q^{20}$ in \cite[(3.5)]{KL2} and using the correct sign for the contribution of $E_{T, 2}\sslash G$. Similarly, there is a misprint in the first formula in \cite[page 569]{KL2} which requires the coefficient $6$ for $q^{26}$.} \cite [Section 3]{KL2}: $$Q=\overline{\mathcal F}^{\text{\,K}}\smallsetminus X_1^s/G\, .$$ Step (ii) examines the kernel of the restriction map $$\chi^*: H^*(\overline{\mathcal F}^{\text{\,K}})\to H^*(Q)\, .$$ This kernel is identified with the kernel of the restriction $$\rho^*: H^*(\overline{\mathcal F}^{\text{\,K}})\to H^*(\widehat E_1\sslash G) \oplus H^*(E_2 \sslash G) \oplus H^*(E_3 \sslash G)\, .$$ Here, $\widehat E_1\sslash G$ is the strict transform of the exceptional divisor $E_1 \sslash G$ of the second blowup (at $\Delta$), and $E_2\sslash G$ and $E_3 \sslash G$ are the exceptional divisors of the third and fourth blowup \cite [page 567]{KL2}. The correct identification of the kernel in codimension $4$ is needed in \cite [Section 6]{KL2} to determine the Betti numbers of $\mathcal F_2$ in high degrees. 

Before reviewing the analysis of the kernel of $\rho^*$ in \cite[Sections 4, 5] {KL2}, we need a few standard preliminaries. Consider the general setting of a $G$-equivariant blowup $$p:\tilde M \to M$$ of a nonsingular quasiprojective $M$ along a nonsingular equivariant center $N$ of codimension $c$, with exceptional divisor $E$. Note that the natural sequence \begin{equation}\label{blowupseq}0\to H_G^*(M)\to H_G^*(\tilde M)\to H_G^*(E)/H_G^*(N)\to 0\end{equation} induces an additive identification \cite [page 505]{KL1}: \begin{equation}\label{moreblowup}H_G^*(\tilde M)=p^* H_G^*(M) \oplus H_G^*(E)/H_G^*(N).\end{equation} Furthermore, $H_G^*(E)/H_G^*(N)$ has the additive basis 
$$\zeta^k \cdot p^* \alpha, \quad 1\leq k\leq c-1 \, ,$$
for classes $\alpha$ giving a basis of $H_G^*(N)$, and $\zeta$ denoting the hyperplane class of the projective bundle $E\to N$. For \eqref{moreblowup}, the splitting  $$H_G^*(E) / H_G^*(N)\to H_G^*(\tilde M)$$ of the natural restriction map in $\eqref{blowupseq}$ is not explicitly stated in \cite {KL1}. However, a splitting can be specified on the additive basis 
\begin{equation}\label{splitting}\zeta^k \cdot p^* \alpha \mapsto E^{k-1} \cdot j_! (p^* \alpha), \quad 1\leq k\leq c-1,\end{equation} with $j_!$ denoting the Gysin map for the closed immersion $E\to \tilde M.$ This convention is standard and is used for instance in \cite [page 495]{invent}. 

As an approximation of $\rho,$ one constructs spaces dominating the cohomology groups of the domain and target of $\rho$. For the domain, the space $\overline{\mathcal F}^{\text{\,K}}$ arises as a $4$-step blowup, and each blowup contributes to cohomology via \eqref{moreblowup}. Thus, the cohomology of $\overline{\mathcal F}^{\text{\,K}}$ has $5$ natural pieces, yielding generators\footnote{This uses Kirwan surjectivity \cite{K}; we only obtain generators after the unstable loci are deleted.}
\begin{equation}\label{ppp}p^*: F^*_1\oplus F^*_2\oplus F^*_3\oplus F^*_4\oplus F^*_5\twoheadrightarrow H^*(\overline{\mathcal F}^{\text{\,K}}).\end{equation} Similarly, there is a surjection  \begin{equation}\label{qqq} q^*: G^*_1\oplus G^*_2\oplus G^*_3\twoheadrightarrow H^*(\widehat E_1\sslash G) \oplus H^*(E_2 \sslash G) \oplus H^*(E_3 \sslash G).\end{equation} The interested reader can consult \cite [Section 4]{KL2} for a more detailed discussion and notation. There is an induced map on generators $$\sigma^*:F^*_1\oplus F^*_2\oplus F^*_3\oplus F^*_4\oplus F^*_5\to G^*_1\oplus G^*_2\oplus G^*_3\, ,$$ which is an approximation of $\rho^*.$ The kernel of $\sigma^*$ is calculated first. 

By \cite [page 576]{KL2}, the kernel of $\sigma^*$ consists of pairs $$(a, b)\in F^*_1 \oplus F^*_2, \quad \tau_1^*(a) + \tau_2^*(b) = 0, \quad \tau_4^*(a) = 0, \quad \tau_6^*(a) = 0\, .$$
Here, $$F_1^* = H^*(X) \otimes H^* (\BSL_3), \quad F_2^* = \tilde H^* (\mathbb P^{21})  \otimes H^* (\BSO_3)\, .$$
The space $F^*_1$ is the equivariant cohomology of the space of plane sextics. Additively, $F^*_2$ can be identified with the equivariant cohomology of the exceptional divisor of the first blowup, modulo the equivariant cohomology of the center of the blowup. Indeed, the codimension of the triple conic orbit is $27 - 5 = 22$, and the normalizer is $\mathrm{SO}_3.$ The maps $$\tau_1^*:F_1^*\to G_3^*, \quad \tau_2^*:F_2^*\to G_3^*$$ are introduced in \cite [page 571]{KL2}, while $\tau_4^*, \tau_6^*$ are constructed in \cite[pages 572--573]{KL2}. 
The target of $\tau_1^*$ and $\tau_2^*$ is the equivariant cohomology of the last exceptional divisor $G_3^*=H_G^*(E_3)$. However, the discussion of \cite[(4.6)]{KL2} shows that these maps factor through the equivariant cohomology $\widetilde G_3^*=H^*_G(G\widehat Z_{R_3}^{ss})$ of the center of the last blowup, followed by pullback:  $$\tau_1^*:F_1^*\to \widetilde G_3^*, \quad \tau_2^*:F_2^*\to \widetilde G_3^*\, .$$
A key step is to show \begin{equation}\label{keystep}\tau_1^* (a) + \tau_2^* (b) = \tau_4^*(a)=\tau_6^*(a)=0\implies \tau_1^* (a) = \tau_2^* (b) =0,\end{equation} see \cite [page 576]{KL2}. In turn, this relies on \cite[Lemma 5.6]{KL2} which specifies the image of $\tau_2^*$. 
 
It is important to understand the map $\tau_2^*$. This map arises from blowing up the codimension $18$ orbit $G \widehat Z_{R_3}^{ss}$ after all the other blowups have been carried out. To explain the notation, $\widehat Z_{R_3}^{ss}$ consists of the semistable points fixed by the torus $$R_3=\text{diag } \langle \lambda, \lambda^{-1}, 1\rangle \subset G=\SL_3\, .$$ This blowup is described in \cite [Section 5.3]{KL1}. It is noted in \cite [page 570]{KL2} that the cohomology of the center of the blowup is $$\widetilde G_3^*=H^*_G (G \widehat Z_{R_3}^{ss}) \cong H_{N(R_3)}^*(\widehat Z_{R_3}^{ss}) \cong H^* (\widehat Z_{R_3}^{ss}\sslash N(R_3)) \otimes H^* (BN_2)\, .$$ Here, $N(R_3)$ is the normalizer of $R_3$ in $G$ inducing a residual action on $\widehat Z_{R_3}^{ss}$, and $N_2$ is the normalizer of the maximal torus in $\SL_2.$ The first isomorphism is a general fact which follows from \cite [Corollary 5.6]{annals}, while the second isomorphism is explained in \cite [Section 5.3]{KL1}.

 The map $\tau_2^*$ can be described as taking classes on the exceptional divisor of the first blowup, viewing them as classes on the first blowup under the construction \eqref{splitting}, then restricting to $\widehat Z_{R_3}^{ss}$, while switching from $G$-equivariance to $N(R_3)$-equivariance \cite[page 571]{KL2}. Now recall that the surface $\widehat Z_{R_3}^{ss}\sslash N(R_3)$ carries an exceptional divisor $\theta$ obtained by blowing up the triple conic $\delta$ in $Z_{R_3}^{ss}\sslash N(R_3)$, see \cite [page 576] {KL2}. Lemma 5.6 in \cite {KL2} states that $$\text{Im }\tau_2^*\subset \widetilde G_3^*=H^*(\widehat Z_{R_3}^{ss}\sslash N(R_3))\otimes H^*(BN_2)$$ equals $\{\theta\} \otimes H^*(BN_2).$ This  is incorrect, and it should be replaced by classes supported on $\theta$, not the class of $\theta$ itself. This comes from the fact that in \eqref{splitting} self-intersections of the exceptional divisor also arise; see also \eqref {tau2} below. As a result, the conclusion 
$$\tau_1^*(a) = \tau_4^*(a) = \tau_6^* (a) = \tau_2^*(b)=0$$ in \cite[page 576]{KL2} does not hold. 

However, the strategy of the argument is sound, and \cite [Lemma 5.10]{KL2} can be salvaged. We will give the details below. We first consider the equations $$\tau_4^* (a) = \tau_6^* (a) = 0\, ,$$ where we may assume $a$ has degree $\leq 23,$ since the lemma only concerns such degrees. 
By \cite[Lemma 5.4]{KL2}, the intersection $\text{Ker }\tau_4^* \cap \text{Ker }\tau_6^*$ in the polynomial ring $$F^*_1=H^*(\mathbb P^{27})\otimes H^*(\BSL_3)=\mathbb Q[H, c_2, c_3]/(H^{28})$$ is generated by two classes of degrees $4$ and $14$, namely $$H^2 \,\,\,\text{and}\,\,\, \alpha=H\cdot (4c_2^3+27c_3^2)\, .$$ We can explicitly list all classes in the intersection of the two kernels of $\tau_4^*$ and $\tau_6^*$. The ranks of $\text{Ker }\tau_4^*\cap \text{Ker }\tau_6^*$ are given in each degree by $$q^4 + q^6 + 2 q^8 + 3 q^{10} + 4 q^{12} + 6 q^{14} + 7 q^{16} + 9 q^{18} + 11 q^{20} + 13 q^{22}\, ,$$ in agreement with \cite[page 577]{KL2}.
For instance, in degree $4$ (the simplest case), we have the unique class $H^2$. In degree $20$ (the most involved case), we have the $11$ classes 
$$H^{10}, H^8 c_2, H^7 c_3, H^6 c_2^2, H^5 c_2 c_3, H^4 c_3^2, H^4 c_2^3, H^3 c_2^2 c_3, H^2 c_2^4, H^2 c_2 c_3^2, \alpha c_3\, ,$$ and the  classes $a$ of degree $20$ lie in the span of these terms. 
Similarly, $$b\in F^*_2=\tilde H^*(\mathbb P^{21}) \otimes H^*(\BSO_3)\, .$$ Let $\zeta$ be the hyperplane class on the first exceptional divisor. The ranks of $F^*_2$ in degrees $\leq 23$ are immediately calculated to be 
$$q^2 + q^4 + 2 q^6 + 2 q^8 + 3 q^{10} + 3 q^{12} + 4 q^{14} + 4 q^{16} + 5 q^{18} + 5 q^{20} + 6 q^{22}\, .$$ For example, in degree $4,$ we have the class $\zeta^2$. In 
degree $20$, we have the $5$ classes $$\zeta^{10}, \, \zeta^8 c_2, \, \zeta^6 c_2^2, \, \zeta^4 c_2^3, \,\zeta^2 c_2^4\, ,$$ and the  classes $b$ of degree $20$ lie in the span of these terms. 
By the proof of \cite [Lemma 5.4]{KL2}, the homomorphism $$\tau_1^*:F^*_1 \to \widetilde G^*_3=H^*(\widehat Z_{R_3}^{ss}\sslash N(R_3))\otimes H^*(BN_2)$$ is given by 
\begin{equation}\label{tau1}\tau_1^*(H) = C\otimes 1, \quad \tau_1^* (c_2) = - 1\otimes \xi  + n ( [\text{pt}]\otimes 1), \quad \tau_1^* (c_3) =  C'\otimes \xi,\end{equation} where $n$ is an integer, $\xi$ is the degree $4$ generator of $H^*(BN_2)$, and $C, C'$ are curves on the surface $\widehat Z_{R_3}^{ss}\sslash N(R_3).$ In fact, by construction $C^2=1$. Similarly, \begin{equation}\label{tau2}\tau^*_2 (\zeta) = \theta\otimes 1,\quad \tau^*_2(c_2)=- 1\otimes \xi \implies \tau^*_2 (\zeta^2) = -  [\text{pt}]\otimes 1, \quad \tau^*_2 (\zeta^k) =0,  \quad k\geq 3.\end{equation} The last vanishing can be seen using the construction \eqref{splitting} and the fact that we restrict to a surface $\widehat Z_{R_3}^{ss}\sslash N(R_3).$ 
Now we can solve \eqref{keystep} in each degree using \eqref{tau1} and \eqref{tau2}. For instance, in degree $4$, we need 
$$a\cdot \tau^*_1 (H^2) + b \cdot \tau^*_2 (\zeta^2) =0\implies a-b=0$$ so the kernel is spanned by $H^2+\zeta^2$. In degree $20$, we write \begin{align*}a &= a_1 \cdot H^{10} + a_2\cdot H^8 c_2 +a_3 \cdot H^7 c_3 + a_4\cdot  H^6 c_2^2 + a_5 \cdot H^5 c_2 c_3 + a_6 \cdot H^4c_3^2 + a_7 \cdot H^4 c_2^3 \\ &\hskip.5in\hskip.5in\hskip.5in\hskip.5in\hskip.5in+a_8 \cdot H^3 c_2^2 c_3 + a_9 \cdot H^2 c_2^4 + a_{10} \cdot H^2 c_2 c_3^2 + a_{11}\cdot \alpha c_3\,,\\
b &= b_1 \cdot \zeta^{10} + b_2 \cdot \zeta^8 c_2 +b_3\cdot \zeta^6 c_2^2 +b_4 \cdot \zeta^4 c_2^3 +b_5 \cdot \zeta^2 c_2^4\,.\end{align*} From here, using \eqref{tau1}, \eqref{tau2}, we find 
$$\tau_1^*(a)+\tau_2^*(b)=0\iff a_9 - 4 a_{11} \cdot (C.C') - b_5 = 0\, .$$ This yields a $15$-dimensional solution space. 
After solving \eqref{keystep} in each degree, we find the ranks of $\text{Ker }\sigma^*$ in degree $\leq 23$ are accurately recorded by \cite[Lemma 5.7]{KL2}:  
$$q^4 + 2q^6 + 3q^8 + 5q^{10} + 6q^{12} + 8q^{14} + 10 q^{16} + 12 q^{18} + 15 q^{20} + 17 q^{22}\, .$$ However, the description of the kernel of $\sigma^*$ in degree $4$ needs to be corrected. As already mentioned, this impacts the argument in \cite [page 581]{KL2}. 

At this stage, thanks to \cite [Lemma 5.7]{KL2}, we have complete knowledge of the kernel of $\sigma^*$ in degree $\leq 23$. The next results \cite [5.9 - 5.11]{KL2} concern $\text{Ker }\rho^*$ which is required in part (ii) above. No correction to the statements in \cite{KL2} is needed here. However, the derivation of \cite [Lemma 5.10]{KL2}
crucially uses \cite [(5.1)]{KL2}. This derivation requires a few modifications to the values in \cite{KL2}. Up to order $23$, we have: 
\begin{align}\label{kern}
&\text{Ker } p^* = q^{16} + 5 q^{18} + 14 q^{20} + 28 q^{22}\,,\\ 
&\text{Ker }q^*_{11} = q^{18} + 3 q^{20} + 6 q^{22}\,,\nonumber\\
&\text{Ker } q^*_2 = q^{16} + 3 q^{18} + 5 q^{20} +8 q^{22}\,, \nonumber \end{align}
while
$$\text{Ker } q^*_3 = q^{18} + 5 q^{20} + 10 q^{22}$$
is correct in \cite {KL2}. Here, $p^*$ is introduced in \eqref {ppp}, and $q^*_{11}, q^*_2, q^*_3$ are certain components of the morphism \eqref{qqq}. Thus, using \cite[(5.9)]{KL2}, the expression 
$$\text{Ker }q^*_{11} + \text{Ker }q^*_2 + \text{Ker }q^*_3 + \text{Ker }\sigma^* - \text{Ker }p^*$$
yields the upper bound for $\text{Ker }\rho^*=\text{Ker }\chi^*$ in \cite[Lemma 5.10]{KL2} to be 
$$q^4 + 2 q^6 + 3 q^8 + 5 q^{10} + 6 q^{12} + 8 q^{14} + 10 q^{16} + 12 q^{18} + 14 q^{20} + 13q^{22}\, .$$ This completes step (ii), and also confirms \cite[Proposition 3.2]{KL2} and equation \eqref{x1s} along with it. 

The method of computing of the ranks of $\text{Ker }p^*$ and $\text{Ker }q^*_2$ is described in \cite[(5.1)]{KL2}, but the details are suppressed and the results are recorded imprecisely. For instance, $\text{Ker }p^*$ receives the following $6$ contributions:
\begin{itemize}
\item from the domain of $p^*$, the term $F^*_1 = H^* (\mathbb P^{27}) \otimes H^*(\BSL_3)$ contributes
$$\frac{1-q^{56}}{1- q^2}\cdot \frac{1}{(1-q^4)(1-q^6)};$$
\item next, $F^*_2= \tilde H^*(\mathbb P^{21}) \otimes H^*(\BSO_3)$ contributes
$$\frac{q^2- q^{44}}{1 - q^2}\cdot \frac{1}{1-q^4};$$ 
\item the remaining pieces of the domain of $p^*$ are found in \cite [page 571]{KL2}. We have $F_3^*=\tilde H^*(\mathbb P^{20}) \otimes H^*(BN)$ which contributes $$\frac{q^2 - q^{42}}{1-q^2}\cdot \frac{1}{(1-q^4)(1-q^6)}\, .$$ Here $N$ is the normalizer of the maximal torus $R_1$ in $G$ (this is denoted $N_3$ in \cite {KL2});
\item similarly for $F_4^*=H^*(\widehat Z_{R_2}\sslash N(R_2)) \otimes H^*(B\mathbb C^*) \otimes \tilde H^*(\mathbb P^{18})$ we get the contribution
$$(1 + q^2)\cdot \frac{1}{1-q^2}\cdot \frac{q^2 - q^{38}}{1-q^2}\, .$$
The first term is computed in \cite[Section 5.1]{KL1};
\item for $F_5^* = H^*(\widehat Z_{R_3}\sslash N(R_3)) \otimes H^*(BN_2) \otimes \tilde H^*(\mathbb P^{17})$ we get        
$$(1 + 3q^2 + q^4)\cdot \frac{1}{1-q^4} \cdot \frac{q^2-q^{36}}{1-q^2}\, .$$
The first term was computed in \cite[Section 5.3]{KL1};
\item for the target of $p^*$, the contribution of $H^*(\overline{\mathcal F}^{\text{\,K}})$ is recorded in \eqref{kirwandesing}. 
\end{itemize}
Putting these contributions together, we find that $\text{Ker }p^*$ is given by \eqref{kern}, as claimed. The discrepancy with the value in \cite [(5.1)] {KL2} is $3q^{22} \mod q^{24}$. 

Next, we examine $q_2^*: G_2^*\twoheadrightarrow H^*(E_2\sslash G).$ The dimension of the target is recorded in \cite[(3.5)]{KL2}: 
\begin{multline*}(1+q^2)(1+2q^2+3q^4+4q^6+5q^8+6q^{10}+7q^{12}+8q^{14}+8q^{16}+8q^{18}+8q^{20}+7q^{22}),
\end{multline*}
up to order $23$. By \cite [(4.3)]{KL2}, the domain is 
$$G_2^* = H^*(\widehat Z_{R_2}\sslash N(R_2)) \otimes H^*(B\mathbb C^*) \otimes H^*(\mathbb P^{18})\, ,$$ whose contribution equals $$(1+q^2)\cdot \frac{1}{1-q^2}\cdot \frac{1-q^{38}}{1-q^2}\, .$$  
Subtracting the two series above, we find the dimension of $\text{Ker } q^*_2$ matching the last equation in \eqref{kern}. The value recorded \cite [(5.1)]{KL2} is different. The misprint likely originates with \cite [Section 5.2]{KL1}. 

Finally, we consider the map $$q^*_1: G^*_1\to H^*(\widehat E_1\sslash G)$$ discussed in \cite[(4.4)]{KL2}.  Here, $\widehat E_1$ is a blowup of $E_1^{ss}$ described in \cite [page 570]{KL2}. As noted in \cite [page 578]{KL2}, the map $q^*_1$ has three components $q^*_{11},$ $q^*_{12},$ $q^*_{13}$, where $$q^*_{11}: H_G^*(E_1)\to H^*(\widehat E_1\sslash G)\, .$$ Since $\widehat E_1$ has no strictly semistable points, we have $$H^*(\widehat E_1\sslash G)=H_G^*(\widehat E_1^{ss})\, ,$$  see also \cite [page 570]{KL2}. We factor $q_{11}^*$ as the composition $$H^*_G(E_1)\stackrel{f^*}{\longrightarrow} H^*_G(E_1^{ss})\stackrel{g^*}{\longrightarrow} H^*_G(\widehat E_1)\stackrel{h^*}{\longrightarrow}  H^*_G(\widehat E_1^{ss})\, .$$ The first map $f^*$ is surjective on general grounds \cite{K}. We will compute its kernel below. The middle map $g^*$ is a pullback induced by a blowup so it is injective. The third map $h^*$ removes unstable strata from the blowup $\widehat E_1$ to arrive at $\widehat E_1^{ss}$. 

The calculation of the kernel of the surjection $f^*: H^*_G(E_1)\to H^*_G(E_1^{ss})$ is a matter of recording dimensions. 
\begin{itemize} 
\item [(a)] For the domain, we have $H^*_G (E_1) = H^* (BN) \otimes  H^*(\mathbb P^{20})$ \cite [page 570]{KL2}. The first factor comes from center of the blowup using the isomorphism $G\Delta=G/N$. We recall that $N$ stands for the normalizer of the maximal torus in $G$. The projective space $\Sigma=\mathbb P^{20}$ corresponds to the projectivization of the normal bundle at $\Delta$ of the orbit $G\Delta$ in $X$. This contributes $$\frac{1}{(1-q^4)(1-q^6)} \cdot \frac{1-q^{42}}{1-q^2}\, .$$ 
\item [(b)] For the target, on general grounds we have $$H^*_G(E_1^{ss}) = H^*_G(G\times_N (\mathbb P^{20})^{ss})=H^*_N((\mathbb P^{20})^{\text{ss}})\, .$$ The $N$-equivariant Poincar\'e series of $(\mathbb P^{20})^{ss}$ was calculated in \cite [Section 6.5]{KL1}, equation (1). Expanding up to order $23$ we find  
$$1 + q^2 + 2 q^4 + 3 q^6 + 4 q^8 + 5 q^{10} + 7 q^{12} + 8 q^{14} + 10 q^{16} + 11 q^{18} + 11 q^{20} + 10 q^{22}\, .$$  
\end{itemize} 
The kernel of $f^*$ up to order $23$ is determined from here by subtracting the two expressions (a) and (b). The answer reproduces the value claimed on the second line of \eqref{kern}. 

Since $g^*$ is injective, we have $$\text{Ker }g^*\circ f^*= \text{Ker }f^*\, .$$ We furthermore claim this agrees with the kernel of $q_{11}^*=h^*\circ g^*\circ f^*.$ To this end, it suffices to show \begin{equation}\label{inters}\text{Ker }h^* \cap \text{Im }g^* = 0 .\end{equation} Indeed, we need to rule out the situation that classes supported on unstable strata of $\widehat E_1$ might equal a class pulled back from $E_1^{ss}$. Should this happen, removing the unstable stratum from $\widehat E_1$ will also kill additional classes on $E_1^{ss}$, thus increasing the kernel of $q_{11}^*$ when compared to the kernel of $f^*$. The discussion might have been clear to the authors of \cite {KL1, KL2} and it was not recorded explicitly, but we indicate here a possible argument. 

Let $\Sigma=\mathbb P^{20}$. By the above remarks (a) and (b), the maps $f^*$, $g^*$ and $h^*$ can be rewritten as $$H_{N}^*(\Sigma)\stackrel{f^*}{\longrightarrow} H_{N}^*\left(\Sigma^{ss}\right)\stackrel{g^*}{\longrightarrow} H_{N}^*(\widehat \Sigma)\stackrel{h^*}{\longrightarrow} H_{N}^*(\widehat \Sigma^{ss})$$ where $\widehat \Sigma$ is the blowup of $\Sigma^{ss}$ along the $N$-orbits of the $R_2$-fixed and $R_3$-fixed loci. 
It is explained in \cite[Section 4.3]{KL1} that $$\Sigma=\mathbb P(W)=\mathbb P^{20}$$ where $W$ is the subspace of sextics spanned by the $21$ monomials $x^i y^j z^{6-i-j}$ for $i, j\geq 0$, $i+j\leq 6$ and $$(i, j)\not \in \{(2, 2), (2, 1), (2, 3), (3, 1), (3, 2), (1, 3), (1, 2)\}\, .$$ Using the terminology of \cite [page 508]{KL1}, these monomials are obtained from the ``Hilbert diagram" in \cite [page 497]{KL1} by removing the middle hexagon. 
\vskip.1in
Recalling $R_2=\text{diag }\langle \lambda^{-2}, \lambda, \lambda \rangle,$ it follows that the $R_2$-fixed locus is the projective line $$\widehat Z_{R_2}=\mathbb P \langle x^2y^4, x^2z^4\rangle\, .$$ Let $Q_2$ is the normalizer of $R_2$ in $N,$ which is easily computed to be isomorphic to the normalizer of the maximal torus in $\GL_2$. We have $Q_2/R_2=\mathbb C^*\rtimes \mathbb Z/2\mathbb Z$. It is easy to see that $R_2$ acts trivially on $\widehat Z_{R_2}^{ss}=\mathbb P^1$, and the $\mathbb C^*$-factor of $Q_2/R_2$ acts with equal opposite weights. 
On general grounds $\widehat Z_{R_2}^{ss}$ consists in the $Q_2$-semistable points of $\widehat Z_{R_2}$, see \cite [Remark 5.5]{annals}. It follows that the unstable points are $x^2y^4$ and $x^2z^4$, so $\widehat Z_{R_2}^{ss}=\mathbb C^*$ Thus, the equivariant cohomology of the orbit is $$H^*_{N}(N \widehat Z_{R_2}^{ss})=H^*_{Q_2}(\widehat Z_{R_2}^{ss})=H^*(BR_2)=H^*(B\mathbb C^*)\, .$$ This is in agreement with \cite [page 570]{KL2}. 

We consider the blowup of $\Sigma^{ss}$ along the orbit of $\widehat Z_{R_2}^{ss}.$ The exceptional divisor $F$ of the blowup is a $\mathbb P^{18}$-bundle over the base. 
We need to identify the unstable locus in the exceptional divisor. The weights of the representation of $R_2$ on $\mathbb P^{18}$ can be lifted from \cite [Section 5.2]{KL1}. Up to an overall factor of $-3$, they are $-2, -1, 1, 2, 3, 4$ with multiplicities $7, 4, 2, 3, 2, 1.$ Thus, in suitable coordinates, the action is given by $$\lambda \cdot [x:y:z:w:t:s]=[\lambda^{-2} x: \lambda^{-1}y: \lambda z: \lambda^2 w: \lambda^3 t: \lambda^4 s]\, ,$$ where $$(x, y, z, w, t, s)\in \mathbb C^7\oplus \mathbb C^4\oplus \mathbb C^2\oplus \mathbb C^3\oplus \mathbb C^2\oplus \mathbb C\, $$ are not all zero. The unstable locus is easily seen to be the union $$\mathbb P^{10}\sqcup \mathbb P^7\, ,$$ corresponding to $z=w=t=s=0$ and $x=y=0$ respectively. This conclusion is in agreement with the Poincar\'e polynomial calculation in \cite [Section 5.2 (1)]{KL1}. Letting $\epsilon$ denote the equivariant parameter for $R_2$, and letting $H$ denote the hyperplane class on $\mathbb P^{18}$, we compute the $R_2$-equivariant classes $$\big[\mathbb P^{10}\big]=(H+\epsilon)^2(H+2\epsilon)^3(H+3\epsilon)^2(H+4\epsilon), \quad \big[\mathbb P^{7}\big]=(H-2\epsilon)^7(H-\epsilon)^4\, .$$ Similar expressions hold for the classes of all equivariant linear subspaces of $\mathbb P^{10}$ or $\mathbb P^7$: the monomials in $H$ and $\epsilon$ above will have different exponents. By inspection, nonzero combinations of such classes never come from the base of the blowup $H^*(BR_2)=\mathbb Q[\epsilon]$ by pullback. This is the key to establishing \eqref{inters}. 

To this end, the reader may find the following diagram useful: 
\[
\begin{tikzcd}
& & H_{N}^{*}(\widehat S)\arrow[r, "\bar j^{*}"]\arrow[d, "i_!"] & H_N^*(\widehat S\cap F)\arrow[d, "\bar i_!"] \\ 
0\arrow[r] &H^*_N(\Sigma^{ss})\arrow[r, "g^*"]\arrow[rd]& H^*_N(\widehat \Sigma) \arrow[r, "j^*"]\arrow[d, "h^*"] & H^*_N(F)\,.                          \\
& & H_{N}^*(\widehat \Sigma^{ss})\,          
\end{tikzcd}
\]
Here, $\widehat S$ is the unstable locus of $\widehat \Sigma.$ The three-term column of the diagram is the Gysin sequence for the closed subvariety $\widehat S\subset \widehat \Sigma.$ For the first term, the cohomology is shifted by codimension, but the notation does not indicate this explicitly. In fact, $\widehat S$ is not pure dimensional, the individual connected components need to be considered separately. 

On general grounds \cite {K}, the unstable locus admits a stratification by locally closed nonsingular subvarieties 
 $$\widehat S=\bigsqcup_{\beta} \widehat S_\beta\, .$$ The intersection $$\widehat S\cap F=\bigsqcup_{\beta} (\widehat S_{\beta}\cap F)$$ is the union of unstable strata of the exceptional divisor $F$, see the proof of \cite [Proposition 7.4]{annals}. 
The inclusion $\bar j$ induces an isomorphism in cohomology $$\bar j^*:H_N^*(\widehat S)\to H_N^*(\widehat S\cap F)\, .$$ Indeed, it is shown in the proof of \cite [Proposition 7.4]{annals} that for each individual stratum, the inclusion induces an isomorphism $$\bar j_{\beta}^{*}:  H^*_N(\widehat S_\beta)\to H^*_N(\widehat S_\beta\cap F)\,.$$  Comparing the spectral sequence of the two stratifications of $\widehat S$ and $\widehat S\cap F$ (or equivalently by comparing the Gysin sequences induced by adding the unstable strata one at a time), we conclude the same is true about the map $\bar j^*.$ 

By the above discussion and \cite [Lemma 7.8]{annals}, we see that $$\widehat S\cap F\to N\widehat Z_{R_2}$$ is a $\mathbb P^{10}\sqcup \mathbb P^{7}$-fibration contained in the $\mathbb P^{18}$-fibration $F\to N\widehat Z_{R_2}.$ To establish \eqref{inters}, let $$\alpha\in H_N^*(\widehat \Sigma), \quad \alpha \in \text{Ker }h^* \cap \text{Im }g^*\,.$$ Then, from the third-term column of the diagram, we have $$\alpha=i_!(\gamma), \quad \gamma\in H^*_N(\widehat S)\,.$$ We compute $$j^*\alpha=j^*i_!(\gamma)=\bar i_! \bar j^*(\gamma)\,.$$ The class $j^*\alpha$ must come from the base of the $\mathbb P^{18}$-fibration $F\to N\widehat Z_{R_2},$ since $\alpha$ is in the image of $g^*$. However, the argument in the paragraphs above shows that classes $\bar j^*(\gamma)$ supported on the unstable part $\widehat S\cap F$ do not come from the base, unless of course $$\bar j^*(\gamma)=0\,.$$ Using that $\bar j^*$ is an isomorphism, we must have $\gamma=0$, hence $\alpha=i_!(\gamma)=0$ as claimed by \eqref{inters}. 

A similar analysis applies to $R_3=\text{diag }\langle \lambda, \lambda^{-1}, 1\rangle$, so $$\widehat Z_{R_3}=\mathbb P \langle x^3y^3, xy z^4, z^6\rangle \, .$$ The blowup is a $\mathbb P^{17}$-bundle over the base, and the representation of $R_3$ is computed in \cite [Section 5.4]{KL1}. The unstable locus is similarly a projective bundle over the base. An analogous argument applies in this case as well.

\vskip.1in


\begin{thebibliography}{1}

\bibitem[A]{Arenathesis}
V. Arena, {\it On weighted blow-ups and their
intersection theory}, Mathematics Theses and Dissertations 2024, Brown Digital Repository, Brown University Library \url{https://repository.library.brown.edu/studio/item/bdr:rwe96j9r/}.

\bibitem[AO]{AO}

V. Arena, S. Obinna, {\it The integral Chow ring of weighted blowups}, preprint \texttt{arXiv:2307.01459}. 


\bibitem [BHPV]{BHPV}

W. Barth, K. Hulek, C. Peters, A. Van de Ven, {\it Compact Complex Surfaces}, Ergeb. Math. Grenzgeb., Vol. 4, Springer-Verlag, Berlin, 2004. 

\bibitem [BLMM]{Ber} 

N. Bergeron, Z. Li, J. Millson, C. Moeglin, {\it The Noether-Lefschetz conjecture and generalizations}, Invent. Math. 208 (2017), 501 - 552.

\bibitem [BM]{BM}

A. Borel, J. Moore, {\it Homology theory for locally compact spaces}, Michigan Math. J. 7  (1960), 137 - 159.

\bibitem [B]{B}

J. Bruinier, {\it On the rank of Picard groups of modular varieties attached to orthogonal groups,} Compositio Math. (2002), 49 - 63.

\bibitem [CK]{CK}

S. Canning, B. Kong, {\it The Chow rings of moduli spaces of elliptic surfaces over $\mathbb P^1$}, Algebr. Geom. 10 (2023), 489 -- 503. 

\bibitem [CL1]{CanningLarson}

S. Canning, H. Larson, {\it Chow rings of low-degree Hurwitz spaces}, J. Reine Angew. Math.
789 (2022), 103 - 152.

\bibitem[CL2]{CanningLarsonStable}

S. Canning, H. Larson, {\it On the Chow and cohomology rings of moduli spaces of stable curves}, preprint \texttt{arXiv:2208.02357}, to appear in J. Eur. Math. Soc. 

\bibitem [CLP]{CanningLarsonPayne}
S.Canning, H. Larson, S. Payne, {\it Extensions of tautological rings and motivic structures in the cohomology of $\Mb_{g,n}$}, preprint \texttt{arXiv:2307.08830}.

\bibitem [COP1]{COP}

S. Canning, D. Oprea, R. Pandharipande, {\it Degree2K3}, Github repository, available at \texttt{https://github.com/ src2165/Degree2K3}.

\bibitem [COP2]{COP2}
S. Canning, D. Oprea, R. Pandharipande, {\it Cycles on the moduli space of principally polarized abelian varieties}, in preparation. Lecture available at \texttt{https://people.math.ethz.ch/\~{}rahul/AbelianBerlin.pdf}.

\bibitem [D]{D}

I. V. Dolgachev, {\it Weighted Projective Varieties}, Group Actions and Vector Fields
(Vancouver, B.C., 1981), Lecture Notes in Math. 956, Springer-Verlag (1982), 34 - 71.

\bibitem [EG]{EG}

D. Edidin, W. Graham, {\it Equivariant intersection theory}, Invent. Math. 131 (1998), 595 - 634. 

\bibitem [Fa]{Faber}

C. Faber, {\it A conjectural description of the tautological ring of the moduli space of curves}, Moduli of curves
and abelian varieties, Aspects Math., E33 (1999), 109 - 129. 

\bibitem [F]{Fulton}
W. Fulton, {\it Intersection theory}, Springer-Verlag, Berlin, 1998.


\bibitem[GK]{kvg}

G. van der Geer, T. Katsura,  {\it Note on tautological classes of moduli of $K3$ surfaces}, Mosc. Math. J. {5} (2005), 775 - 779. 

\bibitem[GP]{GP}
T. Graber, R. Pandharipande,  {\it Constructions of nontautological classes on the moduli spaces of curves}, Michigan Math. J. {51} (2003), 93 - 109. 


\bibitem [GS]{M2}

D. R. Grayson, M. E. Stillman, {\it Macaulay2, a software system for research in algebraic
geometry}, available at \texttt{http://www.math.uiuc.edu/Macaulay2/}.

\bibitem [GSSEC]{S2}

D. R. Grayson, M. E. Stillman, S. A. Strømme, D. Eisenbud, C. Crissman,
{\it Schubert2: characteristic classes for varieties without equations}, Version 0.7, Macaulay2 package
available at \texttt{http://www.math.uiuc.edu/Macaulay2/}.

\bibitem [GLT]{GLT}

F. Greer, Z. Li, Z. Tian, {\it Picard groups on moduli of K3 surfaces with Mukai models}, Int. Math. Res. Not. 16 (2015), 7238 - 7257. 

\bibitem [K1]{K}

F. Kirwan, {\it Cohomology of quotients in symplectic and algebraic geometry}, Mathematical Notes, Princeton University Press, Princeton, NJ, 1984.

\bibitem [K2]{annals}

F. Kirwan, {\it Partial desingularizations of quotients of nonsingular varieties and their Betti numbers}, Ann. of Math. 122 (1985), 41 - 85. 

\bibitem [K3]{invent}

F. Kirwan, {\it Rational intersection cohomology of quotient varieties}, Invent. Math. 86 (1986), 471 - 505.


\bibitem [KL1]{KL1}

F. Kirwan, R. Lee, {\it The cohomology of moduli spaces of K3 surfaces of degree 2. I.}, Topology 28 (1989), 495 - 516.

\bibitem [KL2]{KL2}

F. Kirwan, R. Lee, {\it The cohomology of moduli spaces of K3 surfaces of degree 2. II.}, Proc. London Math. Soc. 58 (1989), 559 - 582. 

\bibitem[Ke]{Keel}
S. Keel, {\it Intersection theory of moduli space of stable N-pointed curves of genus zero}, Trans. Amer.
Math. Soc. 330 (1992), 545 - 574.

\bibitem [Kr]{Kr}

A. Kresch, {\it Cycle groups for Artin stacks}, Invent. Math. 138 (1999), 495 - 536.

\bibitem [La]{La}

R. Laza, {\it The KSBA compactification for the moduli space of degree two K3 pairs}, J. Eur. Math. Soc. 18 (2016), 225 - 279. 

\bibitem [LOG]{LOG}

R. Laza, K. O'Grady, {\it Birational geometry of the moduli space of quartic K3 surfaces}, Compositio Math. 155 (2019), 1655 - 1710. 

\bibitem [Le]{L}

J. Lewis, {\it A Survey of the Hodge Conjecture}, CRM Monogr. Ser., 10
American Mathematical Society, Providence, RI, 1999. 

\bibitem [Lo]{Lo}

E. Looijenga, {\it New compactifications of locally symmetric varieties}, Proceedings of the 1984
Vancouver conference in algebraic geometry, CMS Conf. Proc., Amer. Math. Soc., 1986, 341 - 364.

\bibitem [LFV]{LFV}

A. Di Lorenzo, R. Fringuelli, A. Vistoli, {\it Integral Picard group of moduli of polarized K3 surfaces}, preprint \texttt{arXiv:2305.07574}. 

\bibitem [MOP] {MOP}

A. Marian, D. Oprea, R. Pandharipande, {\it Segre classes and Hilbert schemes of points}, Ann. Sci. \'Ec. Norm. Sup\'er. 50 (2017), 239 - 267. 


\bibitem [MP]{MP}

D. Maulik, R. Pandharipande, {\it Gromov-Witten theory and Noether-Lefschetz theory}, in {\it A celebration of algebraic geometry}, 469 - 507, Clay Math. Proc., 18, Amer. Math. Soc., Providence, RI, 2013. 

\bibitem [Ma]{Ma}

A. Mayer, {\it Families of K-3 Surfaces}, Nagoya Math. J. 48 (1972), 1 - 17. 

\bibitem [M]{M}

R. Miranda, {\it The moduli of {W}eierstrass fibrations over {${\bf P}^{1}$}}, Math. Ann. 255 (1981), 375 - 394.


\bibitem [OG]{OG}

K. O'Grady, {\it On the Picard group of the moduli space for $K3$ surfaces}, Duke Math J. 53 (1986), 117 - 124.

\bibitem [P]{utah}

R. Pandharipande, {\it A calculus for the moduli space of curves}, Algebraic geometry: Salt Lake City 2015, 459 - 487,
Proc. Sympos. Pure Math., Amer. Math. Soc., Providence, RI, 2018.

\bibitem[PY]{PY}

R. Pandharipande, Q. Yin, {\it Relations in the tautological ring of the moduli space of K3 surfaces}, J. Eur. Math. Soc. 22 (2020), 213 - 252. 

\bibitem[PS]{PS}

C. Peters, J. Steenbrink, {\it Mixed Hodge Structures}, Ergeb. Math. Grenzgeb., Vol 52, Springer--Verlag, Berlin, 2008.


\bibitem [Pe1]{P}

D. Petersen, {\it A vanishing result for tautological classes on the moduli of K3 surfaces}, Amer. J. Math. 141 (2019), 733 - 736.

\bibitem [Pe2]{P2}

D. Petersen, answer to \texttt{https://mathoverflow.net/questions/142858/}.


\bibitem [S]{S}

J. Shah, {\it A complete moduli space for K3 surfaces of degree 2}, Ann. of Math. 112 (1980), 485 - 510. 


\bibitem [T]{T}

A. Thompson, {\it Degenerations of K3 surfaces of degree two}, Trans. Amer.
Math. Soc.  366 (2014), 219 - 243. 

\bibitem [To]{To}

B. Totaro, {\it Chow groups, Chow cohomology and linear varieties}, Forum of Mathematics, Sigma 2 (2014), Paper No. e17. 

\bibitem[V]{V}
A. Vistoli, {\it Intersection theory on algebraic stacks and on their moduli spaces}, Invent. Math. 97 (1989), 613-670.

\bibitem [Z]{Z}

J. van Zelm, {\it Nontautological bielliptic cycles}, Pacific J. Math.,  (2018), 495 - 504.

\end{thebibliography}
\end{document}